\numberwithin{equation}{section}
\newtheorem{theorem}{Theorem}[section]
\newtheorem{lemma}[theorem]{Lemma}
\newtheorem{corollary}[theorem]{Corollary}
\newtheorem{proposition}[theorem]{Proposition}
\theoremstyle{definition}
\newtheorem{remark}[theorem]{Remark}
\newtheorem{example}[theorem]{Example}
\theoremstyle{plain}
\numberwithin{figure}{section} 
\theoremstyle{plain}
\theoremstyle{plain}
\theoremstyle{remark}
\newtheorem*{acknowledgement*}{Acknowledgement}
\theoremstyle{example}
\newcommand{\cA}{{\mathcal A}}
\newcommand{\cB}{{\mathcal B}}
\newcommand{\cC}{{\mathcal C}}
\newcommand{\cE}{{\mathcal E}}
\newcommand{\cH}{{\mathcal H}}
\newcommand{\cR}{{\mathcal R}}
\newcommand{\cV}{{\mathcal V}}
\newcommand{\te}{{\theta}}
\newcommand{\ve}{{\varepsilon}}
\newcommand{\del}{{\delta}}
\newcommand{\gam}{{\gamma}}
\newcommand{\sig}{{\sigma}}
\newcommand{\al}{{\alpha}}
\newcommand{\bbE}{{\mathbb E}}
\newcommand{\bbN}{{\mathbb N}}
\newcommand{\bbP}{{\mathbb P}}
\newcommand{\bbR}{{\mathbb R}}
\newcommand{\bbT}{{\mathbb T}}
\newcommand{\bbZ}{{\mathbb Z}}
\newcommand{\bbI}{{\mathbb I}}
\def\fA{\mathfrak{A}}
\def\fg{\mathfrak{g}}
\def\fp{\mathfrak{p}}
\newcommand{\hj}{{\hat j}}
\newcommand{\hq}{{\hat  q}}
\newcommand{\bp}{{\mathbf{p}}}
\newcommand{\brX}{{\bar X}}
\newcommand{\brS}{{\bar S}}
\newcommand{\brj}{{\bar j}}
\newcommand{\brq}{{\bar q}}
\newcommand{\brphi}{{\bar\phi}}
\newcommand{\breps}{{\bar\varepsilon}}
\def\cR{{\mathcal{R}}}
\def\brs{{\bar s}}
\def\brR{{\bar R}}
\def\brkappa{{\bar\kappa}}
\def\fc{{\mathfrak{c}}}
\newcommand{\DS}{\displaystyle}
\begin{document}
\title[]{Edgeworth expansions for independent bounded integer valued random variables.}

 \vskip 0.1cm
\author{Dmitry Dolgopyat and Yeor Hafouta}


\dedicatory{  }

\maketitle

\begin{abstract}
We obtain asymptotic expansions for  local probabilities 
of partial sums for uniformly bounded independent but not necessarily identically distributed integer-valued random variables. The expansions involve products of polynomials and trigonometric
polynomials.
Our results do not require any additional assumptions.
As an application of our expansions we find necessary and sufficient conditions for the classical
Edgeworth expansion. It turns out that
there are two possible obstructions for the validity of
the Edgeworth expansion of order $r$. First, the distance between the distribution of the underlying partial sums modulo some $h\in \bbN$ and the uniform distribution could
fail to be $o(\sigma_N^{1-r})$, where $\sigma_N$ is the standard deviation of the partial sum. 
Second, this distribution
could have the required closeness but this closeness is unstable, in the sense that it could
be destroyed by removing finitely many terms. In the first case, the expansion of order $r$
fails. In the second case it may or may not hold depending on the behavior of the
derivatives of the characteristic functions of the summands 
whose removal causes the break-up of the uniform distribution.
We also show that a
quantitative version of the classical Prokhorov condition (for the strong local central limit theorem) 
is sufficient for Edgeworth expansions, and moreover this condition is, 
in some sense,  optimal.
\end{abstract}

\tableofcontents

\section{Introduction.}\label{SecInt}
Let $X_1,X_2,...$ be a  uniformly bounded sequence of independent integer-valued random variables.
Set $S_N=X_1+X_2+...+X_N$, $V_N=V(S_N)=\text{Var}(S_N)$ and $\sig_N=\sqrt{V_N}$. Assume also that $V_N\to\infty$ as $N\to\infty$.
Then the  central limit theorem (CLT) holds true, namely the distribution of $(S_N-\bbE(S_N))/\sig_N$ converges to the standard normal distribution as $N\to\infty$.

Recall that the local central limit theorem (LLT) states that, uniformly in $k$ we have
\[
\bbP(S_N=k)=\frac1{\sqrt{2\pi}\sig_N}e^{-\left(k-\bbE(S_N)\right)^2/2V_N}+o(\sig_N^{-1}).
\]
This theorem is also a classical result, and it has origins in  De Moivre-Laplace theorem.
The stable local central limit theorem (SLLT) states that the LLT holds true for any integer-valued square integrable independent sequence $X_1',X_2',...$ which differs from $X_1,X_2,...$ by a finite number of elements.  We recall a classical result due to Prokhorov.

\begin{theorem}
\cite{Prok} 
\label{ThProkhorov}
The SLLT holds iff
for each  integer $h>1$,
\begin{equation}\label{Prokhorov}
\sum_n \bbP(X_n\neq m_n \text{ mod } h)=\infty 
\end{equation}
where $m_n=m_n(h)$ is the most likely residue of $X_n$ modulo $h$.
\end{theorem}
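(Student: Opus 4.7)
My plan is to attack both directions via the Fourier inversion
$$\bbP(S_N=k) = \frac{1}{2\pi}\int_{-\pi}^\pi e^{-itk}\phi_N(t)\,dt, \qquad \phi_N(t)=\prod_{n=1}^N \phi_n(t),\ \phi_n(t)=\bbE[e^{itX_n}],$$
and handle each implication separately. For Prokhorov $\Rightarrow$ SLLT, observe first that condition \eqref{Prokhorov} is invariant under any finite modification of $(X_n)$, so it suffices to establish the ordinary LLT for $(X_n)$ itself. Split the inversion integral into: (i) $|t|\leq A/\sig_N$, where a Taylor expansion of $\log\phi_N$ near $0$ produces the Gaussian main term with error $o(1/\sig_N)$; (ii) $A/\sig_N\leq|t|\leq\eps$, where uniform boundedness gives $|\phi_N(t)|\leq e^{-cV_Nt^2}$ and the integral is $o(1/\sig_N)$ once $A$ is taken large; and (iii) $\eps\leq|t|\leq\pi$, the delicate region.

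In (iii), since the $X_n$ are uniformly bounded, $\phi_n$ is a trigonometric polynomial of bounded degree, so a Farey-type dissection of $[\eps,\pi]$ by small neighborhoods of finitely many rationals $2\pi j/h$ (with $h\leq H_0$) together with a residual set on which $|\phi_N|$ decays geometrically in $N$ reduces everything to the analysis on each such neighborhood. There one uses the pointwise estimate
$$|\phi_n(2\pi j/h)|^2 \leq 1 - c_h\, q_n(h), \qquad q_n(h) := \bbP(X_n \neq m_n(h) \text{ mod } h),$$
obtained by direct expansion of $|\phi_n|^2$ and uniform boundedness, together with a local Taylor expansion supplying ``vertical'' decay $|\phi_N(t)|^2 \lesssim |\phi_N(2\pi j/h)|^2\, e^{-cV_N(t-2\pi j/h)^2}$. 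Integrating over the neighborhood yields a contribution $\lesssim |\phi_N(2\pi j/h)|/\sig_N$; the Prokhorov hypothesis $\sum_n q_n(h)=\infty$ forces the prefactor to $0$, and summing over the finitely many rationals delivers the required $o(1/\sig_N)$.

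For SLLT $\Rightarrow$ Prokhorov, I argue by contrapositive: suppose $\sum_n q_n(h)<\infty$ for some $h>1$. Since the infinite product $\prod_{n>M}(1-q_n(h))\to 1$ as $M\to\infty$, pick $M$ with $\prod_{n>M}(1-q_n(h))>1-\tfrac{1}{2h}$ and replace $X_1,\dots,X_M$ by deterministic integer constants $c_1,\dots,c_M$, obtaining a sequence $(X_n')$ differing from $(X_n)$ in only finitely many coordinates (and still with $V(S_N')\to\infty$). By independence of the events $\{X_n\equiv m_n(h)\text{ mod }h\}$ for $n>M$, their joint probability exceeds $1-\tfrac{1}{2h}$, and on their intersection $S_N'$ is congruent modulo $h$ to the single fixed residue $r_N^*\equiv\sum_{n\leq M}c_n+\sum_{M<n\leq N}m_n(h)$. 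But if the LLT held for $(X_n')$, summing the uniform Gaussian approximation over $k\equiv r$ mod $h$ (Riemann-sum comparison with a Gaussian integral, combined with the subgaussian tails of $S_N'$ to truncate the sum) would force $\bbP(S_N'\equiv r\text{ mod }h)\to 1/h$ for every residue $r$, contradicting the lower bound $>1-\tfrac{1}{2h}>1/h$ valid for $h\geq 2$.

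The main obstacle is region (iii) of the sufficiency argument. The Prokhorov hypothesis only guarantees $\sum_n q_n(h)=\infty$ without any explicit rate, so $|\phi_N(2\pi j/h)|$ can tend to $0$ arbitrarily slowly and cannot be compared pointwise to $1/\sig_N$. The resolution is to forgo the pointwise bound and combine this slow ``horizontal'' decay with the ``vertical'' Gaussian decay coming from the variance near $2\pi j/h$, which supplies the missing factor $1/\sig_N$ after integration. The careful Farey-type covering and the bookkeeping of the constants $c_h$ for the finitely many relevant values of $h\leq H_0$ form the core technical content.
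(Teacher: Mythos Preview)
Your outline is essentially correct and the sufficiency direction follows the Rozanov approach that the paper itself reproduces in Lemma~\ref{Step2}. One imprecision: the ``vertical'' bound you state,
\[
|\Phi_N(t)|^2 \lesssim |\Phi_N(2\pi j/h)|^2\, e^{-cV_N(t-2\pi j/h)^2},
\]
is false as written (take any $n$ with $\phi_n(2\pi j/h)=0$). What one actually proves, and what the paper records as \eqref{CharEstRoz}, is the additive-in-the-exponent form
\[
|\Phi_N(t)|\le e^{-c_0 M_N(h)-\bar c_0 N(t-2\pi j/h)^2},
\]
whence integration gives $\lesssim e^{-c_0 M_N(h)}/\sqrt{N}\le K\,e^{-c_0 M_N(h)}/\sig_N=o(\sig_N^{-1})$. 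The conclusion you draw survives with this fix.

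The paper does not prove Theorem~\ref{ThProkhorov} directly (it is cited from \cite{Prok}), but it does prove the array version, Theorem~\ref{ThProkhorovAr}, and the comparison is instructive. For sufficiency the paper packages the Fourier analysis into the first-order generalized expansion \eqref{r=1} and then simply invokes the Rozanov bound \eqref{Roz0} at each nonzero resonant point; your argument is the same in content but more self-contained. For necessity the paper argues differently: if $M_{N_k}(h)$ stays bounded along a subsequence it \emph{removes} the finitely many indices with $q_n(h)\ge 1/(8h)$ and uses the lower bound $|\phi_n(2\pi\ell/h)|\ge 1-2hq_n(h)$ on the remaining factors to show the reduced characteristic function at $2\pi\ell/h$ is bounded away from $0$, whence the LLT fails by \eqref{r=1}. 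Your argument (replace an initial block by constants and show concentration of $S_N'$ mod $h$ on a single residue, contradicting the equidistribution implied by the LLT via a Riemann-sum comparison) is more elementary and purely probabilistic; the paper's route is Fourier-analytic and dovetails with its characterisation of the LLT in Theorem~\ref{ThLLT}. Both are valid; yours avoids the detour through the expansion machinery, while the paper's choice lets it reuse the same estimates for the higher-order results.
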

We refer the readers' to \cite{Rozanov, VMT} for extensions of this result to the case when $X_n$'s are not necessarily bounded (for instance, the result holds true  when 
$\DS \sup_n\|X_n\|_{L^3}<\infty$). 
Related results for local convergence to more general 
limit laws are discussed in \cite{D-MD, MS74}.

The above result provides a necessary and sufficient condition for the SLLT. It turns out that the difference between LLT and SLLT is not that big.

\begin{proposition}
\label{PrLLT-SLLT}
Suppose $S_N$ obeys LLT. Then for each integer $h\geq2$ at least one of the following conditions occur:

either (a) $\DS \sum_n \bbP(X_n\neq m_n(h) \text{ mod } h)=\infty$.

or (b) $\exists j_1, j_2, \dots, j_k$ with $k<h$ such that 
$\DS \sum_{s=1}^k X_{j_s}$ mod $h$ is uniformly distributed.
In that case for all $N\geq \max(j_1, \dots, j_k)$ we have that 
$S_N$ mod $h$ is uniformly distributed. 
\end{proposition}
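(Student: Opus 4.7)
The plan is to evaluate the relevant characteristic functions at the $h$-th roots of unity. Let $\zeta = e^{2\pi i/h}$ and, for $j\in\{1,\dots,h-1\}$, set $\psi_{n,j}:=\bbE[\zeta^{jX_n}]$, so that $\bbE[\zeta^{jS_N}]=\prod_{n=1}^N\psi_{n,j}$ and a distribution on $\bbZ/h\bbZ$ is uniform iff all its nontrivial Fourier coefficients vanish. Throughout, write $v_n:=\bbP(X_n\not\equiv m_n(h)\pmod h)$.

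The first step is to extract from the LLT that $\bbP(S_N\equiv r\pmod h)\to 1/h$ for every residue $r$. This is done by summing the LLT asymptotic
\[
\bbP(S_N=k)=\frac{1}{\sigma_N\sqrt{2\pi}}\,e^{-(k-\bbE S_N)^2/(2V_N)}+o(\sigma_N^{-1})
\]
over integers $k\equiv r\pmod h$: the Gaussian main terms form a Riemann sum with spacing $h/\sigma_N$ converging to $1/h$, while the $o(\sigma_N^{-1})$ remainder is controlled by a standard truncation to $|k-\bbE S_N|\leq M\sigma_N$ (with $M=M(N)\to\infty$ slowly) together with a Chebyshev tail bound. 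Equivalently, $\prod_{n=1}^N\psi_{n,j}\to 0$ for each $j\in\{1,\dots,h-1\}$.

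Next, I would assume (a) fails, i.e.\ $\sum_n v_n<\infty$, and derive (b). Using an independent copy $X_n'$ of $X_n$, one computes
\[
|\psi_{n,j}|^2=\bbE[\cos(2\pi j(X_n-X_n')/h)]\geq 2(1-v_n)^2-1\geq 1-4v_n,
\]
since the event $\{X_n\equiv X_n'\equiv m_n\pmod h\}$ has probability at least $(1-v_n)^2$ and contributes $+1$, while the remainder contributes at worst $-1$. Hence $\sum_n(1-|\psi_{n,j}|^2)<\infty$; choosing $N_0$ with $v_n<1/8$ for $n\geq N_0$ keeps $|\psi_{n,j}|$ bounded away from $0$ on the tail, so $\prod_{n\geq N_0}|\psi_{n,j}|$ converges to a strictly positive value. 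Since $\prod_{n=1}^N\psi_{n,j}\to 0$, the head $\prod_{n<N_0}\psi_{n,j}$ must equal $0$, which forces some $\psi_{n_j,j}=0$ with $n_j<N_0$.

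Finally, letting $\{j_1,\dots,j_k\}$ be the set of distinct values in $\{n_1,\dots,n_{h-1}\}$ gives $k\leq h-1<h$; for every $j$ the vanishing factor $\psi_{n_j,j}$ appears in $\prod_{s=1}^k\psi_{j_s,j}$, so this product is $0$, meaning $\sum_{s=1}^k X_{j_s}\bmod h$ is uniformly distributed. For any $N\geq\max_s j_s$, $S_N$ decomposes as this sum plus an independent sum of the remaining $X_n$'s, and convolution of a uniform distribution on $\bbZ/h\bbZ$ with any independent integer-valued random variable remains uniform, yielding (b). The main technical care is in the first step, where the uniform $o(\sigma_N^{-1})$ LLT error must be summed over infinitely many integers $k\equiv r\pmod h$ without the error accumulating; the remainder is essentially complex-analytic bookkeeping with the factorization of $\bbE[\zeta^{jS_N}]$.
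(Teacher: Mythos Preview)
Your argument is correct and follows essentially the same route as the paper: both proofs show that the LLT forces $\Phi_N(2\pi j/h)\to 0$, then use the failure of (a) to bound the tail product $\prod_{n\geq N_0}|\phi_n(2\pi j/h)|$ away from zero, conclude that the finite head product vanishes, extract an index $n_j$ with $\phi_{n_j}(2\pi j/h)=0$ for each $j$, and collect the distinct $n_j$'s. The only differences are cosmetic: the paper bounds $|\phi_n(t)|\geq 1-2hq_n(h)$ directly from the decomposition $\phi_n(t)=e^{its_n}-\sum_{j\not\equiv s_n}\bbP(X_n\equiv j)(e^{its_n}-e^{itj})$, whereas you use the symmetrization $|\psi_{n,j}|^2=\bbE[\cos(2\pi j(X_n-X_n')/h)]\geq 1-4v_n$; and for the implication ``LLT $\Rightarrow$ equidistribution mod $h$'' the paper simply invokes Theorem \ref{ThLLT} (whose proof goes through the first-order generalized expansion \eqref{r=1}), while your Riemann-sum-plus-truncation argument is more self-contained and avoids that machinery.
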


Since we could not find this result in the literature we include the proof in Section \ref{FirstOrder}. 

Next, we provide necessary and sufficient conditions for the regular LLT. 
We need an additional notation.
Let $\DS K=\sup_n\|X_n\|_{L^\infty}$.
Call $t$ \textit{resonant} if $t=\frac{2\pi l}{m}$ with $0<m\leq 2K$
and $0\leq l<m.$

\begin{theorem}
\label{ThLLT}
The following conditions are equivalent:

(a) $S_N$ satisfies LLT;

(b) For each $\xi\in \bbR\setminus \bbZ$, 
$\DS \lim_{N\to\infty} \bbE\left(e^{2\pi i \xi S_N}\right)=0$;

(c) For each non-zero resonant point $\xi$,
$\DS \lim_{N\to\infty} \bbE\left(e^{2\pi i \xi S_N}\right)=0$;

(d) For each integer $h$ the distribution of $S_N$ mod $h$ converges to uniform.
\end{theorem}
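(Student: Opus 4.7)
The plan is to prove the chain $(a)\Rightarrow(b)\Rightarrow(c)\Rightarrow(d)\Rightarrow(a)$, with only the closing arrow requiring nontrivial work. For $(a)\Rightarrow(b)$ I insert the LLT Gaussian into $\bbE(e^{2\pi i\xi S_N})=\sum_k e^{2\pi i\xi k}\bbP(S_N=k)$ and apply Poisson summation: the Gaussian sum becomes $\sum_{n\in\bbZ}e^{2\pi i(\xi+n)\bbE S_N}e^{-2\pi^2(\xi+n)^2V_N}$ plus an error from the $o(\sigma_N^{-1})$ LLT remainder that is controlled by truncating to $|k-\bbE S_N|\le L\sigma_N$ and sending $L\to\infty$ after $N\to\infty$; each Poisson term decays because $V_N\to\infty$ and $\xi\notin\bbZ$. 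The step $(b)\Rightarrow(c)$ is immediate since non-zero resonant points are not integers.

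The engine for the remaining implications is the following \textbf{Key Lemma}: for every non-resonant $\xi$ there is $c(\xi)>0$ with $|\phi_n(\xi)|^2\le 1-c(\xi)\,\mathrm{Var}(X_n)$ for all $n$, where $\phi_n(\xi)=\bbE(e^{2\pi i\xi X_n})$. The proof uses the symmetrization identity $|\phi_n(\xi)|^2=1-2\bbE\sin^2(\pi\xi Y_n)$ with $Y_n=X_n-X_n'$, $|Y_n|\le 2K$, and the fact that non-resonance forces $\sin^2(\pi\xi k)$ to be uniformly positive for $k=1,\dots,2K$; comparing with $\mathrm{Var}(X_n)$ closes the estimate. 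Consequently $\bbE(e^{2\pi i\xi S_N})$ decays exponentially at every non-resonant $\xi$, unconditionally. Then $(c)\Rightarrow(d)$ follows from the discrete Fourier inversion $\bbP(S_N\equiv j\bmod h)=h^{-1}\sum_{l=0}^{h-1}e^{-2\pi i lj/h}\bbE(e^{2\pi i l S_N/h})$: when $l/h$ in lowest terms has denominator $\le 2K$ it is resonant and (c) applies; otherwise the Key Lemma does the job.

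For $(d)\Rightarrow(a)$ I invert the Fourier transform $\bbP(S_N=k)=\int_{-1/2}^{1/2}e^{-2\pi i\xi k}\prod_n\phi_n(\xi)\,d\xi$ and split $[-1/2,1/2]$ into (i) a neighborhood of $0$, (ii) small neighborhoods of each non-zero resonant $r_j=l/m$, and (iii) the complement. In (i) the classical characteristic-function expansion near $0$ produces the LLT Gaussian with $o(\sigma_N^{-1})$ error; in (iii) the Key Lemma yields $|\prod_n\phi_n(\xi)|\le e^{-cV_N}$, negligible compared to $\sigma_N^{-1}$.

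The main obstacle is (ii): to show $\int_{|\eta|<\delta}|\prod_n\phi_n(r_j+\eta)|\,d\eta=o(\sigma_N^{-1})$ from (d), which only supplies the pointwise vanishing $\prod_n\phi_n(r_j)\to 0$. Applying the Key Lemma at the non-resonant $r_j+\eta$ yields $|\prod_n\phi_n(r_j+\eta)|\le e^{-c\eta^2 V_N}$ (the constant $c(r_j+\eta)$ is of order $\eta^2$ since the exceptional $k\in\{1,\dots,2K\}$ with $\sin^2(\pi r_jk)=0$ are precisely the multiples of $m/\gcd(l,m)$), giving $O(\sigma_N^{-1})$ for the integral. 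To upgrade this to $o(\sigma_N^{-1})$ I partition the indices by whether $|\phi_n(r_j)|\le 1-c_1$ (``good'') or not (``bad'', i.e.\ $X_n$ nearly constant modulo $m$). A Prokhorov-style dichotomy in the spirit of Proposition \ref{PrLLT-SLLT} shows that (d) forces either infinitely many good indices (giving a vanishing factor $(1-c_1/2)^{|I_{\text{good}}|}\to 0$) or a bad set in which the quotients $Q_n=(X_n-c_n)/m$ carry enough variance that factoring out the deterministic resonant phase $e^{2\pi i r_j c_n}$ and applying the near-origin Key Lemma to the $Q_n$'s supplies the extra $\varepsilon_N\to 0$ factor. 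Combined with the Gaussian decay in $\eta$ and integrated, this yields the required $o(\sigma_N^{-1})$.
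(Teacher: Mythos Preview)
Your argument has a genuine gap in the implication $(d)\Rightarrow(a)$, specifically in region (ii) when the number of ``good'' indices stays bounded. In that case your proposed mechanism---that the quotients $Q_n=(X_n-c_n)/m$ of the bad indices supply an extra $\varepsilon_N\to 0$ factor via the near-origin Key Lemma---does not work. Factoring out the resonant phase for bad indices gives $|\phi_n(r_j+\eta)|\approx |\bbE(e^{2\pi i m\eta Q_n})|$, and the near-origin estimate applied to this yields precisely the Gaussian decay $e^{-c\eta^2\,\mathrm{Var}(X_n)}$ you already used; integrating still only gives $O(\sigma_N^{-1})$, not $o(\sigma_N^{-1})$. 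No additional small factor appears from the bad block. What actually happens when the good set is bounded (hence eventually a fixed finite set $G$) is that, since the bad product $\prod_{n\notin G}|\phi_n(r_j)|$ stays bounded away from $0$, condition $(d)$ forces $\prod_{n\in G}\phi_n(r_j)=0$, i.e.\ some $\phi_{n_0}(r_j)=0$ exactly. That zero is the source of the extra decay: $|\phi_{n_0}(r_j+\eta)|=O(|\eta|)$, and combining with the Gaussian from the rest gives $\int |\eta|\,e^{-c\eta^2 V_N}\,d\eta=O(V_N^{-1})=o(\sigma_N^{-1})$. Your invocation of Proposition~\ref{PrLLT-SLLT} is the right pointer, but its conclusion in alternative (b) is precisely this vanishing of a finite product, not a variance statement about the $Q_n$.

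The paper avoids this case analysis altogether by a different route: rather than bounding $|\Phi_N|$ on the resonant interval, it computes the integral exactly to leading order. The key is the factorization (Lemma~\ref{Step3}) $\Phi_{N_0,N}(t_j+h)=\Phi_{N_0,N}(t_j)\,\Phi_{N_0,N}(h)\,\Psi_{N_0,N}(h)$, which separates the value at $t_j$ from a near-origin characteristic function; this yields the first-order identity \eqref{r=1}, from which Corollary~\ref{FirstOrCor} reads off $(a)\Leftrightarrow(c)$ directly via Lemma~\ref{Lemma}. Your Key Lemma is the paper's \eqref{NonResDec}, and your $(c)\Leftrightarrow(d)$ and $(b)\Leftrightarrow(c)$ match the paper's. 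The Poisson-summation argument for $(a)\Rightarrow(b)$ is fine and more elementary than the paper's route, which recovers this from the expansion \eqref{r=1}.
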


The proof of this result is also given in Section \ref{FirstOrder}. We refer the 
readers to 
\cite{Do, DS} for related results in more general settings.

The local limit theorem deals with approximation of $P(S_N=k)$ up to an error term of order $o(\sig_N^{-1})$. Given $r\geq1$, 
the Edgeworth expansion of order $r$ holds true 
if there are polynomials $P_{b, N}$, whose coefficients are uniformly bounded in $N$ and their degrees do no depend on $N,$ so that 
 uniformly in $k\in\bbZ$ we have
that
\begin{equation}\label{EdgeDef}
\bbP(S_N=k)=\sum_{b=1}^r \frac{P_{b, N} (k_N)}{\sigma_N^b}\fg(k_N)+o(\sigma_N^{-r})
\end{equation}
where $k_N=\left(k-\bbE(S_N)\right)/\sig_N$ 
and  $\fg(u)=\frac{1}{\sqrt{2\pi}} e^{-u^2/2}. $
In Section \ref{Sec5} we will show, in particular,
that Edgeworth expansions of any order $r$ are unique up to terms of order $o(\sig_N^{-r}),$ and so the case $r=1$  coincides with the LLT. 
Edgeworth expansions for discrete (lattice-valued) random variables have been studied in literature
for iid random variables  \cite[Theorem 4.5.4]{IL} 
\cite[Chapter VII]{Pet75}, (see also \cite[Theorem 5]{Ess}),
 homogeneous Markov chains  \cite[Theorems 2-4]{Nag2},
 decomposable statistics \cite{MRM}, or 
{ dynamical systems \cite{FL} with good spectral properties such as expanding maps.
Papers \cite{Bor16, GW17} discuss the rate of convergence in the LLT.
Results for non-lattice variables were obtained in \cite{Feller, BR76, CP, Br} (which considered random vectors) and \cite{FL} 
(see also \cite{Ha} for corresponding results for random expanding dynamical systems).

In this paper we 
obtain analogues of Theorems \ref{ThProkhorov} and \ref{ThLLT} for higher order Edgeworth expansions for independent but not identically distributed integer-valued uniformly bounded random variables.
We begin with the following result.

\begin{theorem}
\label{ThEdgeMN}
Let $\DS K=\sup_j\|X_j\|_{L^\infty}.$ 
For each $r\in\bbN$ there is a constant $R\!\!=\!\!R(r, K)$  such that 
the Edgeworth expansion of order $r$ holds~if 
\[
M_N:=\min_{2\leq h\leq 2K}\sum_{n=1}^N\bbP(X_n\neq m_n(h) \text{ mod } h)\geq R\ln V_N.
\]
In particular, $S_N$ obeys Edgeworth expansions of all orders if
$$ \lim_{N\to\infty} \frac{M_N}{\ln V_N}=\infty. $$
\end{theorem}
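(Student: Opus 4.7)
The plan is to use Fourier inversion,
$$\bbP(S_N=k) = \frac{1}{2\pi}\int_{-\pi}^{\pi} e^{-itk}\phi_N(t)\,dt, \qquad \phi_N(t)=\prod_{n=1}^N \bbE(e^{itX_n}),$$
and decompose $[-\pi,\pi]$ as $U_0\cup\bigcup_{\xi}U_\xi\cup W$, where $U_0=\{|t|<\delta_0\}$ is a fixed window around $0$, each $U_\xi=\{|t-\xi|<\delta_1\}$ surrounds one of the finitely many non-zero resonant points $\xi=2\pi\ell/m\in(-\pi,\pi]$ with $m\le 2K$, and $W$ is the complement. The window $U_0$ will produce the entire Edgeworth sum, and the task is to show that the two remaining pieces each contribute $o(\sig_N^{-r})$.

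On $U_0$, change variable $u=t\sig_N$, factor out the mean, and Taylor expand
$$\log\bigl(\phi_N(u/\sig_N)\,e^{-iu\bbE(S_N)/\sig_N}\bigr) = -\tfrac{u^2}{2} + \sum_{j\ge 3}\frac{(iu)^j K_{N,j}}{j!\,\sig_N^j},$$
where $K_{N,j}=\sum_n \kappa_{n,j}$ is the $j$-th cumulant of $S_N$. Because $|X_n|\le K$, the individual cumulant bound $|\kappa_{n,j}|\le C_j K^{j-2}V(X_n)$ gives $|K_{N,j}/\sig_N^j|\le C_j K^{j-2}\sig_N^{-(j-2)}$, so the correction series is organized in decreasing powers of $1/\sig_N$. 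Exponentiating, truncating at order $r-1$, multiplying by $e^{-iuk_N}$, and integrating against the Gaussian via Hermite identities produces the polynomials $P_{b,N}$ with uniformly bounded coefficients and fixed degrees; extending the $u$-integration to the whole line costs a super-exponentially small tail from $e^{-u^2/2}$.

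The heart of the proof is the estimate on each $U_\xi$, which relies on the identity
$$|\phi_n(t)|^2 = 1-2\bbE\!\left(\sin^2\tfrac{t(X_n-X_n')}{2}\right),$$
where $X_n'$ is an independent copy of $X_n$. Write $\xi=2\pi\ell/m$ with $\gcd(\ell,m)=1$; for every integer $j$ with $m\nmid j$, $\sin^2(\xi j/2)>0$, so by continuity one can fix $\delta_1=\delta_1(K)>0$ and $c=c(K)>0$ such that $\sin^2(tj/2)\ge c$ for all $t\in U_\xi$ and all such $j$ with $|j|\le 2K$. Setting $p_{n,j}=\bbP(X_n\equiv j\pmod{m})$ and $\alpha_n=\bbP(X_n\not\equiv m_n(m)\pmod{m})$, the inequality $\sum_j p_{n,j}^2\le p_{n,m_n(m)}$ gives $\bbP(X_n\not\equiv X_n'\pmod{m})\ge \alpha_n$; combined with the sine lower bound this yields $\bbE(\sin^2(t(X_n-X_n')/2))\ge c\alpha_n$, hence $|\phi_n(t)|\le e^{-c\alpha_n}$ and
$$|\phi_N(t)|\le \exp(-cM_N(m))\le \exp(-cM_N)\le V_N^{-cR}.$$
Choosing $R=R(r,K)$ with $cR>r/2$ makes $\int_{U_\xi}|\phi_N|\,dt=o(\sig_N^{-r})$. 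On $W$, $t$ is bounded away from every resonant point, so $\sin^2(tj/2)\ge c_0(K)>0$ for every non-zero integer $j$ with $|j|\le 2K$; using $V(X_n)\le 2K^2\bbP(X_n\ne X_n')$ one obtains $|\phi_N(t)|\le e^{-c_1 V_N}$, which is super-polynomially small in $\sig_N$. The "in particular" statement is immediate: once $M_N/\ln V_N\to\infty$, every order $r$ is eventually admissible.

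The main obstacle is the estimate on $U_\xi$: one must promote the pointwise bound $|\phi_N(\xi)|\le e^{-cM_N}$ to a bound uniform on a neighborhood of $\xi$ of radius independent of $N$, while keeping the additive dependence on each $\alpha_n(m)$ so that the product accumulates exponentially in $M_N(m)$. The geometric lemma that $\sin^2(tj/2)$ is uniformly bounded below on $U_\xi$ for $m\nmid j,\,|j|\le 2K$ is the ingredient that makes this possible; once it is in place the rest is routine bookkeeping against the hypothesis $M_N\ge R\ln V_N$.
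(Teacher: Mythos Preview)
Your proposal is correct and follows essentially the same route as the paper: Fourier inversion, the same three-way decomposition of the circle into a window around $0$, windows around the finitely many nonzero resonant points $2\pi\ell/m$ with $m\le 2K$, and the complement; the interval around $0$ is handled by the cumulant/Taylor expansion of $\log\bar\Phi_N$ (the paper packages this as Proposition~\ref{PropEdg} via a blocking argument, you invoke the individual cumulant bound $|\kappa_{n,j}|\le C_jK^{j-2}V(X_n)$ directly, which is equivalent), and the other two regions are dispatched by the same exponential bounds $|\Phi_N|\le e^{-cM_N}$ and $|\Phi_N|\le e^{-cV_N}$. Your use of the symmetrization identity $|\phi_n(t)|^2=1-2\,\bbE\sin^2\!\bigl(t(X_n-X_n')/2\bigr)$ is exactly the Rozanov-type computation the paper carries out in Lemma~\ref{Step2} (there written as $|\phi(t)|^2-1=\sum_j\tilde P_j[\cos(tj)-1]$), and your inequality $\bbP(X_n\not\equiv X_n'\!\pmod m)\ge\alpha_n$ via $\sum_j p_{n,j}^2\le\max_j p_{n,j}$ is a clean replacement for the paper's slightly coarser bound $\sum_{j\in A}\tilde P_j\ge q_m(X)/m$.
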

The number $R(r,K)$ can be chosen according to Remark \ref{R choice}.
This theorem is a quantitative version of Prokhorov's Theorem \ref{ThProkhorov}. 
We observe that logarithmic in $V_N$ growth of various non-perioidicity characteristics
of individual summands are often used in the theory of local limit theorems
(see e.g. \cite{Mal78, MS70, MPP}).  
We will see from the examples of Section \ref{ScExamples} that this result is close to optimal.
However, to  justify the optimality we need to understand the conditions necessary 
for the validity of the Edgeworth expansion.

\begin{theorem}\label{r Char}
For any $r\geq1$, the Edgeworth expansion of order $r$ holds if and only if for any nonzero resonant point $t$ and $0\leq\ell<r$ 
we have
\[
\bar \Phi_{N}^{(\ell)}(t)=o\left(\sig_N^{\ell+1-r}\right).
\]
where $\bar\Phi_{N}(x)=\bbE[e^{ix (S_N-\bbE(S_N))}]$
and $\bar\Phi_{N}^{(\ell)}(\cdot)$ is its $\ell$-th derivative. 
\end{theorem}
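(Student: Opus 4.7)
The plan is to base both directions on the Fourier inversion formula
\[
\bbP(S_N=k)=\frac{1}{2\pi}\int_{-\pi}^{\pi}e^{-it(k-\bbE(S_N))}\bar\Phi_N(t)\,dt,
\]
partitioning the circle $[-\pi,\pi]$ into a small neighborhood $I_0$ of $t=0$, disjoint neighborhoods $I_{t_0}$ of each nonzero resonant point $t_0$, and a non-resonant complement $E$. By the arguments used in Theorem \ref{ThLLT} (and the stronger versions developed earlier in the paper), $|\bar\Phi_N(t)|$ decays faster than any power of $\sigma_N^{-1}$ on $E$, so its contribution is $o(\sigma_N^{-A})$ for every $A$ and plays no role. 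On $I_0$, the standard cumulant Taylor expansion of $\log\bar\Phi_N$ combined with the change of variables $u=\sigma_N t$ produces the polynomial terms $\sigma_N^{-b}P_{b,N}(k_N)\fg(k_N)$ of \eqref{EdgeDef}; this part of the analysis is insensitive to what $\bar\Phi_N$ does at nonzero resonant points.

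For sufficiency, on each $I_{t_0}$ with $t_0$ nonzero resonant, I would Taylor expand
\[
\bar\Phi_N\!\left(t_0+\tfrac{s}{\sigma_N}\right)=\sum_{\ell=0}^{r-1}\frac{\bar\Phi_N^{(\ell)}(t_0)}{\ell!\,\sigma_N^{\ell}}s^{\ell}+R_N(s),
\]
and insert this into the Fourier integral after the substitution $t=t_0+s/\sigma_N$. The $\ell$-th term contributes
\[
\frac{\bar\Phi_N^{(\ell)}(t_0)}{\sigma_N^{\ell+1}}\,e^{-it_0(k-\bbE(S_N))}\,\psi_\ell(k_N)
\]
for a Schwartz function $\psi_\ell$ (arising from integrating $s^\ell$ against $e^{-isk_N}$ with a smooth cutoff). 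Under the hypothesis $\bar\Phi_N^{(\ell)}(t_0)=o(\sigma_N^{\ell+1-r})$ this is $o(\sigma_N^{-r})$. The remainder $R_N$ is bounded using $\|\bar\Phi_N^{(r)}\|_\infty\leq \bbE|S_N-\bbE(S_N)|^r=O(\sigma_N^r)$, so it contributes $O(\delta^{r+1}\sigma_N^{-1})\cdot O(\sigma_N^r)$, which is $o(\sigma_N^{-r})$ for $\delta$ small. Summing the nonzero-$t_0$ pieces gives a quantity of size $o(\sigma_N^{-r})$, yielding \eqref{EdgeDef}.

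For necessity, the key observation is that the Edgeworth approximation $E_r(k)=\sum_b \sigma_N^{-b}P_{b,N}(k_N)\fg(k_N)$, viewed on the integer lattice, has Fourier series $\sum_k E_r(k)e^{itk}$ whose value at any nonzero resonant $t_0$ is super-exponentially small: by Poisson summation it equals $\sum_j \hat F(\sigma_N(t_0+2\pi j))\sigma_N e^{i(t_0+2\pi j)\bbE(S_N)}$, where $\hat F$ is the Fourier transform of the fixed Schwartz function $\sum_b \sigma_N^{-b}P_{b,N}\fg$, and so is bounded by $e^{-c\sigma_N^2}$. Assuming \eqref{EdgeDef}, one writes
\[
i^{-\ell}\sigma_N^{-\ell}\bar\Phi_N^{(\ell)}(t_0)=\sum_{k}k_N^\ell\,e^{it_0(k-\bbE(S_N))}\bbP(S_N=k),
\]
splits the sum at $|k_N|\leq C\sqrt{\log\sigma_N}$, absorbs the tails using the Gaussian decay of $E_r$ together with the uniform $o(\sigma_N^{-r})$ bound (the tails carry negligible mass and hence negligible weighted mass), and replaces $\bbP(S_N=k)$ by $E_r(k)+o(\sigma_N^{-r})$ on the bulk. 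The $E_r$ contribution is $O(e^{-c\sigma_N^2})$ by the above Poisson estimate, while the $o(\sigma_N^{-r})$ error, weighted by $|k_N|^\ell\leq(\log\sigma_N)^{\ell/2}$ and summed over the $O(\sigma_N\sqrt{\log\sigma_N})$ lattice points in the bulk, gives $o(\sigma_N^{1-r})(\log\sigma_N)^{(\ell+1)/2}$. Multiplying by $\sigma_N^\ell$ yields $\bar\Phi_N^{(\ell)}(t_0)=o(\sigma_N^{\ell+1-r})$ after absorbing logarithms into the $o(\cdot)$. The main obstacle is precisely this last step: one must carefully justify the tail truncation and verify that the Poisson super-exponential bound really dominates all residual oscillations, so that the polynomial Edgeworth sum produces no spurious contribution at a nonzero resonant point that could mask or cancel the derivative one is trying to isolate.
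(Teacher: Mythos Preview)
Both directions of your proposal contain genuine gaps that do not close with the tools you invoke.

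\textbf{Sufficiency.} Your remainder estimate is far too weak. From $\|\bar\Phi_N^{(r)}\|_\infty=O(\sig_N^r)$ and the Lagrange form of the Taylor remainder you get $|R_N(h)|\leq C\sig_N^r|h|^r$ in the original variable $h=t-t_0$, and hence $\int_{|h|\leq\del}|R_N(h)|\,dh=O(\sig_N^r\del^{r+1})$. For fixed $\del>0$ this is of order $\sig_N^r$, not $o(\sig_N^{-r})$; the expression $O(\del^{r+1}\sig_N^{-1})\cdot O(\sig_N^r)$ you wrote down equals $O(\del^{r+1}\sig_N^{r-1})$ and is not $o(\sig_N^{-r})$ for any fixed $\del$. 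What is missing is precisely the Gaussian-type decay of $\bar\Phi_N$ near each nonzero resonant point $t_j$. The paper obtains this via the factorization $\Phi_N=\Phi_{N_0}\Phi_{N_0,N}$ with $N_0=O(\ln V_N)$ (Lemma~\ref{EpsLem}) together with Lemma~\ref{Step3} and the bound \eqref{expo}, which yield $|\Phi_{N_0,N}(t_j+h)|\leq e^{-cV_Nh^2}$. Only the short product $\Phi_{N_0}$ is Taylor expanded (Lemma~\ref{Step4}), and its derivatives grow at most like powers of $\ln V_N$; this is what makes the remainder negligible.

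\textbf{Necessity.} The sentence ``after absorbing logarithms into the $o(\cdot)$'' is not justified. Your bulk estimate produces $o(\sig_N^{1-r})(\log\sig_N)^{(\ell+1)/2}$, and without a quantitative rate on the $o(\sig_N^{-r})$ error in the assumed Edgeworth expansion, this is genuinely larger than $o(\sig_N^{1-r})$ for $\ell\geq1$. Optimizing the truncation level $M$ against the moment tail bound only gives $o((\sig_N^{1-r})^{1-\eps})$ for each $\eps>0$, not the sharp $o(\sig_N^{1-r})$ needed. The paper avoids this loss entirely: it compares the assumed classical expansion with the always-valid generalized expansion of Theorem~\ref{IntIndThm}, and then uses the uniqueness Lemma~\ref{Lemma} for finite trigonometric sums to deduce that every coefficient attached to a nonzero frequency must itself be $o(\sig_N^{-r})$. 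This yields conditions on $\Phi_{N_0,N}(t_j)\Phi_{N_0}^{(\ell)}(t_j)$ (Proposition~\ref{Thm}, proved by induction on $r$ via Lemma~\ref{LemInd}), which Lemma~\ref{Lem} then translates into the stated conditions on $\bar\Phi_N^{(\ell)}(t_j)$. The inductive structure is precisely what absorbs the logarithmic factors one term at a time.
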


 This result\sout{s} generalizes Theorem \ref{ThLLT}, however in contrast with
that theorem,  in the case $r>1$ we also need to take into account the behavior of the derivatives 
of the characteristic function at nonzero resonant points. 
The values of the characteristic function at the resonant points $2\pi l/m$ have clear probabilistic meaning. Namely, they control the rate 
 equidistribution modulo $m$ (see part (d) of Theorem \ref{ThLLT} or Lemma \ref{LmUnifFourier}).
 Unfortunately, the probabilistic meaning of the derivatives is less clear, so it is desirable  to
characterize the validity of the Edgeworth expansions of orders higher than 1 without
 considering the derivatives. Example \ref{ExUniform} shows that this is impossible without additional assumptions. Some of the reasonable additional conditions are presented below.
 
We start with the expansion of order 2.

\begin{theorem}\label{Thm SLLT VS Ege}
Suppose $S_N$ obeys the SLLT. Then  the following are equivalent:

(a) Edgeworth expansion of order 2 holds;

(b) $|\Phi_N(t)|=o(\sigma_N^{-1})$ for each nonzero resonant point $t$;

(c) For each $h\leq 2K$ the distribution of $S_N$ mod $h$ is
$o(\sigma_N^{-1})$ close to uniform.
\end{theorem}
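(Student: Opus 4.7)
The plan is to invoke Theorem \ref{r Char} with $r=2$, which asserts that the Edgeworth expansion of order $2$ is equivalent to the pair of conditions $\bar\Phi_N(t_0)=o(\sigma_N^{-1})$ (the case $\ell=0$) and $\bar\Phi_N'(t_0)=o(1)$ (the case $\ell=1$) holding at every nonzero resonant point $t_0$. Since $|\Phi_N(t)|=|\bar\Phi_N(t)|$, the first of these is precisely condition (b); so the essential point is to show that, given the standing SLLT assumption, the derivative condition is automatic at every nonzero resonant point.

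Fix a nonzero resonant point $t_0=2\pi l_0/m_0$; after reducing the fraction we may assume $\gcd(l_0,m_0)=1$ and $2\leq m_0\leq 2K$. Set $p_{n,j}:=\bbP(X_n\equiv j\,\text{mod}\,m_0)$, $c=m_n(m_0)$, $q_n:=1-p_{n,c}$, and $\phi_n(t)=\bbE[e^{it(X_n-\bbE X_n)}]$. Since $\bar\Phi_N=\prod_n\phi_n$, the Leibniz rule gives
\[
\bar\Phi_N'(t_0)=\sum_{n=1}^N \phi_n'(t_0)\prod_{m\neq n}\phi_m(t_0).
\]
I would then prove two uniform-in-$n$ estimates controlled by $q_n$:
\emph{(i)} $|\phi_n'(t_0)|\leq C(K)q_n$, obtained by writing $\phi_n'(t_0)=ie^{-it_0\bbE X_n}\sum_j e^{2\pi il_0j/m_0}\nu_{n,j}$ with $\nu_{n,j}:=\bbE[X_n\mathbf{1}\{X_n\equiv j\}]-p_{n,j}\bbE X_n$, then using $\sum_j\nu_{n,j}=0$ to replace $e^{2\pi il_0j/m_0}$ by $e^{2\pi il_0j/m_0}-e^{2\pi il_0c/m_0}$ (which vanishes at $j=c$) and bounding $|\nu_{n,j}|\leq 2Kp_{n,j}$; and
\emph{(ii)} $1-|\phi_n(t_0)|^2\geq c(K)q_n$, obtained from
\[
1-|\phi_n(t_0)|^2=2\sum_{j<k}p_{n,j}p_{n,k}\bigl(1-\cos(2\pi l_0(j-k)/m_0)\bigr),
\]
together with the uniform lower bound $p_{n,c}\geq 1/m_0$, the pigeonhole bound $\max_{j\neq c}p_{n,j}\geq q_n/(m_0-1)$, and the fact that $1-\cos(2\pi l_0 r/m_0)$ is bounded below on $r\not\equiv 0\,(\text{mod}\,m_0)$ thanks to $\gcd(l_0,m_0)=1$.

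By Theorem \ref{ThProkhorov} applied with $h=m_0$, SLLT yields $Q_N:=\sum_{n=1}^N q_n\to\infty$. Combining \emph{(i)}, \emph{(ii)}, and the inequality $(1-x)^{1/2}\leq e^{-x/2}$ on $[0,1]$ then gives
\[
|\bar\Phi_N'(t_0)|\leq C(K)\sum_{n=1}^N q_n\,e^{-c(K)(Q_N-q_n)/2}\leq C'(K)\,Q_N\,e^{-c(K)Q_N/2}\xrightarrow[N\to\infty]{}0,
\]
which is the required derivative condition; together with (b), Theorem \ref{r Char} then gives (a), while the converse $(a)\Rightarrow(b)$ is immediate from Theorem \ref{r Char}. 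The equivalence (b)$\Leftrightarrow$(c) follows from the Fourier inversion identity on $\bbZ/h$,
\[
\bbP(S_N\equiv j\,\text{mod}\,h)-\frac{1}{h}=\frac{1}{h}\sum_{l=1}^{h-1}e^{-2\pi ilj/h}\,\bbE\bigl[e^{2\pi ilS_N/h}\bigr],
\]
upon observing that for $h\leq 2K$ each point $2\pi l/h$, $1\leq l\leq h-1$, is a (possibly reducible) nonzero resonant point, so that one direction is a triangle inequality and the other a direct computation.

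The main obstacle I foresee is estimate \emph{(ii)}: extracting a linear-in-$q_n$ lower bound on $1-|\phi_n(t_0)|^2$ uniform in $n$ in the absence of identical distribution, especially when $q_n$ approaches its natural upper bound $1-1/m_0$, where the naive two-mode estimate $p_{n,c}(1-p_{n,c})$ degenerates; the combination of the a priori mode bound $p_{n,c}\geq 1/m_0$ with the pigeonhole selection of a second sizeable residue is what renders the bound uniform.
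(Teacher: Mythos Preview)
Your proof is correct and takes a somewhat different, more direct route than the paper's. Both reduce to showing that, under SLLT, the derivative condition at nonzero resonant points is automatic (you invoke Theorem~\ref{r Char}; the paper uses the closely related Proposition~\ref{Thm} phrased in terms of the split $\Phi_N=\Phi_{N_0}\Phi_{N_0,N}$). The difference lies in how that derivative condition is obtained. The paper argues by a case split: if every $|\phi_k(t_j)|\geq \ln\sigma_N/\sigma_N$, it writes $\Phi_{N_0}'(t_j)\Phi_{N_0,N}(t_j)=\sum_{k\leq N_0}(\phi_k'/\phi_k)\,\Phi_N(t_j)$ and uses hypothesis (b) together with $N_0=O(\ln\sigma_N)$; otherwise it isolates a single ``small'' index $k_N$ and bounds the remaining product via the Prokhorov estimate~\eqref{Roz0}. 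You instead establish two uniform pointwise bounds, $|\phi_n'(t_0)|\leq C q_n$ and $|\phi_n(t_0)|\leq e^{-cq_n/2}$, and multiply them out to get $|\bar\Phi_N'(t_0)|\leq C' Q_N e^{-cQ_N/2}\to 0$ from $Q_N\to\infty$ alone. Your argument is cleaner (no case split, no $N_0$-decomposition) and actually proves the slightly stronger fact that SLLT by itself---without assuming (b)---already forces $\bar\Phi_N'(t_0)\to 0$; the paper's version, by contrast, recycles machinery already built for the main expansion theorem and does use (b) in the first case. Your worry about estimate (ii) degenerating for large $q_n$ is unfounded: keeping only the cross terms with $j=c$ gives $1-|\phi_n(t_0)|^2\geq 2\delta\, p_{n,c}\sum_{j\neq c}p_{n,j}\geq (2\delta/m_0)\,q_n$, which is uniform since $p_{n,c}\geq 1/m_0$, and one checks directly that $(2\delta/m_0)\,q_n\leq 1$ for every $2\leq m_0\leq 2K$.
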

 Corollary \ref{CorNoDer} provides an extension 
 of Theorem \ref{Thm SLLT VS Ege} for expansions of an arbitrary order $r$ under an additional
 assumption that 
 $\DS \varphi:=\min_{t\in\cR}\inf_{n}|\phi_n(t)|>0$, where $\cR$ is the set of all nonzero resonant points. The latter condition  implies in particular, 
 that for each $\ell$ there is a uniform lower bound on the distance between the distribution of $X_{n_1}+X_{n_2}+\dots +X_{n_\ell}$ 
 \text{mod }$m$ and the uniform distribution, when $\{n_1, n_2,\dots ,n_\ell\} 
 \in\bbN^\ell$ and $m\geq2$.

 Next we discuss an analogue of Theorem \ref{ThProkhorov}  for expansions of order higher than 2. It requires
 a stronger condition  
which uses
an additional notation. Given
$j_1, j_2,\dots,j_s$ with $j_l\in [1, N]$ we write
$$ S_{N; j_1, j_2, \dots,j_s}=S_N-\sum_{l=1}^s X_{j_l}. $$
Thus $S_{N; j_1, j_2, \dots ,j_s}$ is a partial sum of our sequence with 
$s$ terms removed. We will say that $\{X_n\}$ 
{\em admits an Edgeworth expansion of order $r$ in a superstable way} 
(which will be denoted by $\{X_n\}\in EeSs(r)$) if for each $\brs$  and each
sequence $j_1^N, j_2^N,\dots ,j_{s_N}^N$ with $s_N\leq \brs$
there are polynomials $P_{b, N}$ whose coefficients are $O(1)$
in $N$ and their degrees do not depend on $N$ so that 
 uniformly in $k\in\bbZ$ we have
that
\begin{equation}\label{EdgeDefSS}
\bbP(S_{N; j_1^N, j_2^N, \dots, j_{s_N}^N}=k)=\sum_{b=1}^r \frac{P_{b, N} (k_N)}{\sigma_N^b}
\fg(k_N)+o(\sigma_N^{-r})
\end{equation}
and the estimates in $O(1)$ and $o(\sigma_N^{-r})$ are uniform in the choice
of the tuples $j_1^N, \dots ,j_{s_N}^N.$
That is, by removing a finite number of terms we can not destroy the validity of 
the Edgeworth expansion (even though the coefficients of the underlying polynomials
will of course depend on the choice of the removed terms). 
Let $\Phi_{N; j_1, j_2,\dots, j_s}(t)$ be the characteristic function of
$S_{N; j_1, j_2,\dots, j_s}.$

\begin{remark}
We note that in contrast with SLLT, in the definition of the superstrong Edgeworth
expansion one is only allowed to remove old terms, but not to add new ones. 
This difference in the definition is not essential, since adding terms with sufficiently
many moments (in particular, adding bounded terms) does not destroy the validity 
of the Edgeworth expansion. See  the proof of Theorem \ref{Thm Stable Cond} (i) or
the second part of Example \ref{ExNonAr}, starting with
equation \eqref{Convolve},
for details.
\end{remark}

\begin{theorem}\label{Thm Stable Cond}
(1) $S_N\in EeSs(1)$ (that is, $S_N$ satisfies the LLT 
in a superstable way) if and if it satisfies the SLLT.

(2) For arbitrary $r\geq 1$ the following conditions are equivalent:

(a) $\{X_n\}\in EeSs(r)$;

(b) For each $j_1^N, j_2^N,\dots ,j_{s_N}^N$ and each nonzero resonant point $t$ we have
$\Phi_{N; j_1^N, j_2^N,\dots, j_{s_N}^N}(t)=o(\sigma_N^{1-r});$

(c) For each $j_1^N, j_2^N,\dots ,j_{s_N}^N$, and each $h \leq 2K$ 
the distribution of $S_{N; j_1^N, j_2^N,\dots, j_{s_N}^N}$ mod $h$ is
$o(\sigma_N^{1-r})$ close to uniform.
\end{theorem}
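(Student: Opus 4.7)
The plan is to prove (2) by establishing (a) $\Rightarrow$ (b) $\Leftrightarrow$ (c) $\Rightarrow$ (a), applying Theorem \ref{r Char} to each truncated sum $S_{N;j_1^N,\ldots,j_{s_N}^N}$ viewed as the partial sum of a new sequence obtained by deleting the removed indices. Part (1) is short: by definition the order-$1$ Edgeworth expansion is the LLT, so $EeSs(1)$ says the LLT persists under any uniformly bounded deletion; combined with the Remark (bounded additions preserve Edgeworth expansions), this is exactly the SLLT.

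\textbf{(a) $\Rightarrow$ (b) and (b) $\Leftrightarrow$ (c).} The equivalence (b) $\Leftrightarrow$ (c) is finite Fourier analysis on $\bbZ/h\bbZ$ (cf.\ Lemma \ref{LmUnifFourier}): up to multiplicative constants, the distance of the law of $S_{N;j_1,\ldots,j_s}\bmod h$ from the uniform distribution equals $\max_{0<l<h}|\Phi_{N;j_1,\ldots,j_s}(2\pi l/h)|$, and since $h\leq 2K$ these frequencies are precisely the nonzero resonant points. For (a) $\Rightarrow$ (b), the variance of the truncated sum equals $V_N-O(\brs)$, so its standard deviation is asymptotic to $\sigma_N$. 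Since the full Edgeworth expansion of order $r$ holds for $S_{N;j_1^N,\ldots,j_{s_N}^N}$ uniformly in the tuple, the necessity direction of Theorem \ref{r Char} applied to the truncated sequence yields $|\Phi_{N;j_1^N,\ldots,j_{s_N}^N}(t)|=|\bar\Phi_{N;j_1^N,\ldots,j_{s_N}^N}(t)|=o(\sigma_N^{1-r})$ at each nonzero resonant $t$, with the required uniformity inherited from $EeSs(r)$.

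\textbf{(b) $\Rightarrow$ (a): the main obstacle.} The plan is to apply the sufficiency direction of Theorem \ref{r Char} to each truncated sum. The delicate point is that Theorem \ref{r Char} demands decay of all derivatives $\bar\Phi^{(\ell)}$ at nonzero resonant points for $0\leq\ell<r$, while hypothesis (b) only controls the value ($\ell=0$). The resolution exploits that (b) is assumed uniformly over removal tuples of any bounded size: applying (b) with enlarged tuples of size up to $\brs+r-1$, together with the factorization
\[
\Phi_{N;j_1,\ldots,j_s}(t)=\phi_m(t)\,\Phi_{N;j_1,\ldots,j_s,m}(t)
\]
and the trivial estimate $|\phi_m^{(k)}(t)|\leq K^k$, allows one via the Leibniz rule and induction on $\ell$ to convert the value-bounds on the longer-deletion characteristic functions into the required derivative bounds for the shorter truncation (after centering, which contributes only bounded factors). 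The uniformity of the polynomials $P_{b,N}$ produced by Theorem \ref{r Char} then follows from the uniform boundedness of the normalized cumulants of $S_{N;j_1^N,\ldots,j_{s_N}^N}$: the $k$-th cumulant differs from that of $S_N$ by $O(\brs K^k)$, so after dividing by $\sigma_N^k$ the coefficients are uniformly $O(1)$, which together with the uniform Fourier bounds yields the uniform $o(\sigma_N^{-r})$ remainder across all admissible removal tuples.
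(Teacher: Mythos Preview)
Your overall architecture for part (2) --- (a) $\Rightarrow$ (b) via Theorem \ref{r Char}, (b) $\Leftrightarrow$ (c) via Lemma \ref{LmUnifFourier}, and (b) $\Rightarrow$ (a) by upgrading value-bounds to derivative-bounds through enlarged deletions --- matches the paper's. However, the step (b) $\Rightarrow$ (a) as you wrote it does not go through. You differentiate via Leibniz,
\[
\Phi_{N;j_1,\dots,j_s}^{(\ell)}(t)
=\sum_{\substack{n_1,\dots,n_k\text{ distinct}\\ \ell_1+\dots+\ell_k=\ell,\ \ell_i\geq 1}}
c_{\ell_1,\dots,\ell_k}\Bigl(\prod_i \phi_{n_i}^{(\ell_i)}(t)\Bigr)\,
\Phi_{N;j_1,\dots,j_s,n_1,\dots,n_k}(t),
\]
bound each factor $|\phi_{n_i}^{(\ell_i)}(t)|\leq K^{\ell_i}$, and invoke (b) on the longer deletion. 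But the sum ranges over \emph{all} $n_i\leq N$, so already for $\ell=1$ you get $N$ terms and only conclude $|\Phi'_{N;j_1,\dots,j_s}(t)|=o(N\sigma_N^{1-r})$. Since typically $N\asymp\sigma_N^{2}$, this is $o(\sigma_N^{3-r})$, far from the required $o(\sigma_N^{2-r})$; the induction on $\ell$ you mention does not cure the loss, because each Leibniz step reintroduces a factor of $N$. The parenthetical ``after centering, which contributes only bounded factors'' misses the point: centering is exactly what makes the individual factors \emph{small}, not merely bounded, and you must use that smallness.

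The paper's fix is to observe that one may assume $M_N(m)\leq \brR\ln\sigma_N$ (else \eqref{Roz0} already gives superpolynomial decay), so only $\check N=O(\ln\sigma_N)$ indices have $q_n(m)\geq\bar\eps$; split $\Phi_{N;j_1,\dots,j_s}=\check\Phi_N\,\tilde\Phi_N$ accordingly and show (as in Lemma \ref{Lem}) that it suffices to bound $\check\Phi_N^{(\ell)}(t_j)\tilde\Phi_N(t_j)$. The Leibniz sum for $\check\Phi_N^{(\ell)}$ then has only $O(\check N^{\,\ell})=O(\ln^\ell\sigma_N)$ terms, and each term is a bounded constant times a value $\Phi_{N;n_1,\dots,n_k,j_1,\dots,j_s}(t_j)=o(\sigma_N^{1-r})$ from (b), giving $o(\sigma_N^{1-r}\ln^\ell\sigma_N)=o(\sigma_N^{1+\ell-r})$. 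Equivalently, one can stay with your global Leibniz sum but replace the trivial bound by the centered estimate $|\bar\phi_n'(t_j)|=O(q_n(m))$ (a consequence of \eqref{IndCharTaylor}), so that $\sum_n|\bar\phi_n'(t_j)|=O(M_N(m))=O(\ln\sigma_N)$; either way, the logarithmic count is the missing ingredient.

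For part (1) your sketch is also too quick. The direction SLLT $\Rightarrow$ $EeSs(1)$ is not addressed (the paper obtains it from \eqref{Roz0} and $M_N(h)\to\infty$). For $EeSs(1)\Rightarrow$ SLLT you invoke ``the Remark'', but that Remark explicitly defers its justification to the very proof you are writing, and in any case the SLLT permits replacement by arbitrary \emph{square-integrable} (not bounded) variables; the paper handles this by the truncation $Y_N=Y\bbI(|Y|<\sigma_N^{1/2+\ve})$ and a short asymptotic computation, which you would need to supply.
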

\smallskip

To prove the above results we will show that 
for any order $r$, we can always approximate $\mathbb{P}(S_N=k)$ up to an error
$o(\sigma_N^{-r})$ provided that
instead of polynomials we use products of regular and
 the trigonometric polynomials. Those products allow us} to take into
account possible oscillatory behavior of $P(S_N=k)$ when $k$ belongs
to different residues mod $h$, where $h$ is denominator of a {\em resonant frequency.}
When $M_N\geq R V_N$
for $R$ large enough,  the new expansion coincides with the usual Edgeworth expansions. We thus derive that the condition $M_N\geq R\ln V_N$ is
 in a certain sense optimal.

\section{Main result}\label{Main R}
Let $X_1,X_2,...$ be a sequence of independent integer-valued random variables. For each $N\in\bbN$ we set
 $\DS S_N=\sum_{n=1}^N X_n$ and $V_N=\text{Var}(S_N)$. We assume in this paper that 
 $\DS \lim_{N\to\infty}V_N=\infty$ and that
 there is a constant $K$ such that
$$\sup_n \|X_n\|_{L^\infty}\leq K.
$$
Denote  $\sigma_N=\sqrt{V_N}.$ For each positive integer $m$, let
$ q_n(m)$ be the second largest among
$$\DS \sum_{l\equiv j \text{ mod }m} \bbP(X_n=l)=\bbP(X_n\equiv j \text{ mod }m),\,j=1,2,...,m$$ and $j_n(m)$ be the corresponding residue class.
Set $$ M_N(m)=\sum_{n=1}^N q_n(m)\quad\text{and}\quad M_N=\min_m M_N(m).$$

\begin{theorem}\label{IntIndThm}
 There $\exists J=J(K)<\infty$ and polynomials $P_{a, b, N}$,
where $a\in 0, \dots ,J-1,$ $b\in \mathbb{N}$,
 with degrees depending only on $b$ but not on $a, K$ or on any other 
characteristic of $\{X_n\}$, such that
the coefficients of $P_{a,b, N}$ are uniformly bounded in $N$, 
and, for any $r\geq1$ uniformly in $k\in\bbZ$ we have
$$\bbP(S_N=k)-\sum_{a=0}^{J-1} \sum_{b=1}^r \frac{P_{a, b, N} ((k-a_N)/\sigma_N)}{\sigma_N^b}
\fg((k-a_N)/\sigma_N) e^{2\pi i a k/J} =o(\sigma_N^{-r})
$$
where $a_N=\bbE(S_N)$ and $\fg(u)=\frac{1}{\sqrt{2\pi}} e^{-u^2/2}. $

Moreover, $P_{0,1,N}\equiv1$, 
and 
given $K, r$, there exists $R=R(K,r)$ such that if 
$M_N\geq R \ln V_N$ then we can choose $P_{a, b, N}=0$ for $a\neq 0.$ 
\end{theorem}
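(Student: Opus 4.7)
The plan is to apply Fourier inversion to $\bbP(S_N=k) = \frac{1}{2\pi}\int_{-\pi}^{\pi}\Phi_N(t)e^{-ikt}\,dt$ and analyze $\Phi_N(t) = \prod_n \phi_n(t)$ near every frequency at which it can have modulus close to $1$. Since $|X_n|\leq K$, the identity $|\phi_n(t)|=1$ can only occur at $t=2\pi l/m$ with $m\leq 2K$, so every such point lies in $\{2\pi a/J:0\leq a<J\}$ where $J:=\mathrm{lcm}(1,\dots,2K)$. Choose $\del=\del(K)>0$ so small that the arcs $I_a:=\{t:|t-2\pi a/J|<\del\}$ are pairwise disjoint, and set $C=[-\pi,\pi]\setminus\bigcup_a I_a$. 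On $C$ the elementary identity $|\phi_n(t)|^2=1-2\sum_{j<j'}p_{n,j}p_{n,j'}(1-\cos((j-j')t))$ yields $|\phi_n(t)|^2\leq 1-c(K,\del)\,\mathrm{Var}(X_n)$, hence $|\Phi_N(t)|\leq e^{-cV_N/2}$, an exponentially small contribution absorbed into the error.

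The heart of the argument is the analysis on each $I_a$. After the change of variables $t=2\pi a/J+u/\sig_N$ and the identity $e^{-ikt}=e^{-2\pi iak/J}e^{-iua_N/\sig_N}e^{-iuk_N}$ with $k_N=(k-a_N)/\sig_N$, the $I_a$-integral becomes
$$\frac{e^{-2\pi iak/J}}{\sig_N}\int_{|u|\leq\del\sig_N}\Phi_N(2\pi a/J+u/\sig_N)e^{-iua_N/\sig_N}e^{-iuk_N}\,du.$$
Factoring out $\Phi_N(2\pi a/J)$, taking logarithms, and Taylor-expanding each factor $\log(\phi_n(2\pi a/J+u/\sig_N)/\phi_n(2\pi a/J))$ in powers of $u/\sig_N$ produces a series whose coefficients are ``twisted cumulants'' $\sum_n\partial_t^j\log\phi_n(2\pi a/J)$, each summand of size $O(K^j)$. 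Exponentiating and truncating after finitely many orders in the standard Edgeworth fashion gives an expression $e^{-u^2/2}\sum_b Q_{a,b,N}(u)\sig_N^{-b}$ with $\deg Q_{a,b,N}$ depending only on $b$; Fourier inversion in $u$ then converts $u^j e^{-u^2/2}$ into Hermite polynomials in $k_N$ times $\fg(k_N)$, yielding exactly the required terms $\sig_N^{-b}P_{a,b,N}(k_N)\fg(k_N)e^{-2\pi iak/J}$. Uniform boundedness in $N$ of the coefficients of $P_{a,b,N}$ follows from $|X_n|\leq K$. For $a=0$ this reduces to the classical Edgeworth expansion, whose leading term $\sig_N^{-1}\fg(k_N)$ forces $P_{0,1,N}\equiv 1$.

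The refinement under $M_N\geq R\ln V_N$ uses a direct estimate: for $a\neq 0$ write $2\pi a/J=2\pi l/m$ in lowest terms, so $2\leq m\leq 2K$; the same $1-\cos$ expansion gives $|\phi_n(2\pi l/m)|^2\leq 1-c_K q_n(m)$, hence $|\Phi_N(2\pi a/J)|\leq e^{-c_K M_N(m)/2}\leq e^{-c_K M_N/2}\leq V_N^{-c_K R/2}$. Choosing $R=R(K,r)$ so that $c_K R/2>r/2+1$ makes every $I_a$-contribution with $a\neq 0$ of order at most $\sig_N^{-1}V_N^{-c_K R/2}=o(\sig_N^{-r})$, and all the corresponding $P_{a,b,N}$ may be set to zero.

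I expect the main obstacle to be the $a\neq 0$ expansion in the general regime (no hypothesis on $M_N$). The ``twisted second derivative'' $-\sum_n(\log\phi_n)''(2\pi a/J)$ is generically complex and need not agree with $V_N$ to leading order, so the Gaussian profile of the integrand at the scale $u/\sig_N$ is not exactly $e^{-u^2/2}$. Forcing the expansion into the universal form demanded by the theorem (Gaussian $\fg(k_N)$ against the universal scale $\sig_N$) requires absorbing the discrepancy into the polynomials $P_{a,b,N}$ and verifying that their coefficients remain uniformly bounded in $N$. Handling the occasional points where some $\phi_n(2\pi a/J)$ vanishes (via grouping of summands or a perturbation argument) adds further book-keeping.
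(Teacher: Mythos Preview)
Your overall architecture (Fourier inversion, exponential decay away from resonant points, the Prokhorov-type bound for the $M_N\geq R\ln V_N$ regime) matches the paper's, and those parts are fine. The genuine gap is exactly where you flag it, but your proposed remedy does not work and the obstacle is not mere book-keeping.

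Concretely: at a nonzero resonant point $t_j=2\pi a/J$ you expand $\log\bigl(\phi_n(t_j+u/\sig_N)/\phi_n(t_j)\bigr)$ and claim the resulting ``twisted cumulants'' $\sum_n(\log\phi_n)^{(j)}(t_j)$ feed into an expression of the form $e^{-u^2/2}\sum_b Q_{a,b,N}(u)\sig_N^{-b}$. Two things break. First, the summand bound ``$O(K^j)$'' presupposes $|\phi_n(t_j)|$ bounded away from~$0$, which fails in general; some $\phi_n(t_j)$ can be arbitrarily small or vanish (cf.\ Example~\ref{ExUniform}). Second, and more seriously, even when all $\phi_n(t_j)$ are bounded below you have not shown that the twisted second cumulant $-\sum_n(\log\phi_n)''(t_j)$ equals $V_N$ up to a correction that can be absorbed into \emph{polynomials with bounded coefficients}. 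If the ratio stayed an $O(1)$ distance from~$1$, the mismatch $e^{(1-c)u^2/2}$ would be a genuine Gaussian of the wrong width, not a polynomial perturbation, and your absorption step would produce $P_{a,b,N}$ with unbounded coefficients. The same issue appears at first order: your linear phase $e^{-iua_N/\sig_N}$ does not cancel the twisted first cumulant $\sum_n(\log\phi_n)'(t_j)/\sig_N$, which is a priori of size $O(\sig_N)$.

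The paper's key idea, which you are missing, is to \emph{split the indices} according to whether $q_n(m)\geq\breps$. In the relevant regime $M_N(m)\leq R\ln V_N$ there are only $N_0=O(\ln V_N)$ ``bad'' indices; for them one Taylor-expands the finite product $\Phi_{N_0}(t_j+h)$ directly (Lemma~\ref{Step4}), the derivatives being only $O((\ln V_N)^\ell)$. For the ``good'' indices one does \emph{not} take twisted logarithms at all; instead one proves the factorization $\Phi_{N_0,N}(t_j+h)=\Phi_{N_0,N}(t_j)\,\Phi_{N_0,N}(h)\,\Psi_{N_0,N}(h)$ (Lemma~\ref{Step3}), where the middle factor is the \emph{untwisted} characteristic function at $h$ near $0$---supplying exactly the correct Gaussian $e^{-V_{N_0,N}h^2/2}$---and the correction $\Psi$ has $\log$-coefficients of total size $O(M_N(m))=O(\ln V_N)$. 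The mechanism behind this is that when $q_n(m)$ is small, $X_n$ is nearly deterministic mod~$m$, so $e^{it_jX_n}$ is nearly a constant phase and $\bbE(e^{it_jX_n}\brX_n^{\,p})/\bbE(e^{it_jX_n})=\bbE(\brX_n^{\,p})+O(q_n(m))$. This is what forces the twisted cumulants to agree with the true ones up to $O(M_N(m))$; without isolating the small-$q_n$ indices you cannot see it.
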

We refer the readers to (\ref{r=1})  for more details on these expansions in the case $r=1$, and to Section \ref{FirstOrder} for a discussion about the relations with local limit theorems. The resulting expansions in the case $r=2$ are given in (\ref{r=2'}). We note that the constants $J(K)$ and $R(K,r)$ can be recovered from  the proof of Theorem \ref{IntIndThm}.

\begin{remark}
Since the coefficients of the polynomials $P_{a,b,N}$ are uniformly bounded, the terms corresponding to $b=r+1$ are  of order  $O(\sig_N^{-(r+1)})$ uniformly in $k$. Therefore, in the $r$-th order expansion we actually get that the error term  is $O(\sig_N^{-(r+1)})$.
\end{remark}

\begin{remark}
In fact, the coefficients of the polynomials $P_{a,b,N}$ for $a>0$ are bounded by a constant times   $(1+M_N^{q})e^{-c_0  M_N}$, where $c_0>0$ depends only on $K$ and $q\geq 0$ depends only on $r$ and $K$. Therefore, these coefficient are small when $M_N$ is large. When $M_N\geq R(r,K)\ln V_N$ these coefficients  become of order $o(\sig^{-r}_N).$  Therefore, they only contribute to the error term, and so we can replace them by $0$, as stated in Theorem \ref{IntIndThm}.
\end{remark}

\begin{remark}
As in the derivation of the classical Edgeworth expansion, 
the main idea of the proof of Theorem \ref{IntIndThm} 
is the stationary phase analysis 
of the characteristic function. However, in contrast with the iid case 
 there may be
 resonances other than 0 which contribute to the oscillatory terms in the expansion.
Another interesting case where the classical Edgeworth analysis fails is the case of iid
terms where the summands are non-arithmetic but take only finitely many values.
It is shown in \cite{DF} that in that case, the leading correction to the Edgeworth expansion
also comes from resonances. However, in the case studied in \cite{DF} the geometry of 
resonances is more complicated, so in contrast 
to our Theorem \ref{IntIndThm}, 
\cite{DF} does not get the expansion of all orders.
\end{remark} 

\section{Edgeworth expansions under quantitative Prokhorov condition.}
\label{ScEdgeLogProkh}
In this section we prove Theorem \ref{ThEdgeMN}. In the course of the proof we obtain
the estimates of the characteristic function on  intervals 
not containing resonant points which will also 
play an important role in the proof of Theorem \ref{IntIndThm}.
The proof of Theorem \ref{IntIndThm} will be completed  in Section \ref{ScGEE}
where we analyze additional contribution coming from nonzero resonant points which appear in the case $M_N\leq R \ln \sig_N.$ Those contributions
constitute the source of the trigonometric polynomials in the generalized Edgeworth 
expansions.

\subsection{Characterstic function near 0.}
Here we recall some facts about the behavior of the characteristic function
near 0, which will be useful in the proofs of Theorems \ref{ThEdgeMN} and \ref{IntIndThm}.
The first result holds  for general uniformly bounded sequences 
$\{X_n\}$ (which are not necessarily integer-valued).
\begin{proposition}\label{PropEdg}
Suppose that $\DS \lim_{N\to\infty} \sig_N=\infty$, where $\sig_N\!=\!\sqrt{V_N}=
\sqrt{V(S_N)}$. Then
for $k=1,2,3,...$ there exists a sequence of polynomials $(A_{k,N})_N$ whose degree $d_k$ depends only on $k$ so that for any $r\geq 1$ there is $\del_r>0$ such that for all $N\geq1$ and $t\in[-\del_r\sig_N,\del_r\sig_N]$,
\begin{equation}\label{FinStep.0}
\bbE\left(e^{it(S_N-\bbE(S_N))/\sig_N}\right)=e^{-t^2/2}\left(1+\sum_{k=1}^r \frac{A_{k,N}(t)}{\sig_N^k}+\frac{t^{r+1}}{\sig_N^{r+1}} O(1)\right).
\end{equation}
Moreover, the coefficients of $A_{k,N}$ are algebraic combinations of moments of the $X_m$'s  
and they are uniformly bounded in $N$. Furthermore
\begin{equation}\label{A 1,n .1}
A_{1,N}(t)=
-\frac{i}{6}\gam_N t^3\,\,\text{ and }\,\,A_{2,N}(t)=\Lambda_{4}(\bar S_N)\sig_N^{-2}\frac{t^4}{4!}-\frac{1}{36}\gam_N^2 t^6
\end{equation}
where $\bar S_N=S_N-\bbE(S_N)$, $\gam_N=\bbE[(\bar S_N)^3]/\sig_N^2$ and $\Lambda_{4}(\bar S_N)$ is the fourth comulant of $\bar S_N$.
\end{proposition}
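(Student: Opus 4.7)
The plan is to follow the classical Cramér-type derivation: factor the characteristic function, take logarithms via the cumulant expansion, and exponentiate. Since $|X_n - \bbE X_n|\leq 2K$, each $\phi_n(s) := \bbE(e^{is(X_n-\bbE X_n)})$ is entire with $|\phi_n(s)-1|<1/2$ in a disk $|s|<\rho(K)$ whose radius depends only on $K$, and there $\log\phi_n(s)=\sum_{k\geq 2}\Lambda_k(X_n-\bbE X_n)(is)^k/k!$. A key quantitative input is the estimate $|\Lambda_k(X_n-\bbE X_n)|\leq C_k(K)\,V(X_n)$ for $k\geq 2$: writing the $k$-th cumulant as a polynomial in the centered moments over set partitions of $\{1,\dots,k\}$, vanishing of the first moment forces every contributing block to have size $\geq 2$, and then $|\bbE(X_n-\bbE X_n)^j|\leq(2K)^{j-2}V(X_n)$ for $j\geq 2$ ensures that exactly one factor of $V(X_n)$ survives. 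By independence $\Lambda_k(\bar S_N)=\sum_n\Lambda_k(X_n-\bbE X_n)$, so $|\Lambda_k(\bar S_N)|\leq C_k\sigma_N^2$ for every $k\geq 2$.

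Substituting $s=t/\sigma_N$ and isolating the $k=2$ term yields, for $|t|\leq\rho(K)\sigma_N$,
\[
\log\bbE(e^{it\bar S_N/\sigma_N})=-\tfrac{t^2}{2}+Q_N(t),\quad Q_N(t)=\sum_{j\geq 1}\frac{\tilde q_{j,N}(t)}{\sigma_N^j},
\]
where $\tilde q_{j,N}(t):=\bigl[\Lambda_{j+2}(\bar S_N)/\sigma_N^2\bigr](it)^{j+2}/(j+2)!$ is a polynomial of degree $j+2$ whose coefficients are uniformly bounded in $N$. Expanding $\exp(Q_N)$ by the multinomial theorem and collecting powers of $\sigma_N^{-1}$ produces
\[
A_{j,N}(t)=\sum_{n=1}^{j}\frac{1}{n!}\sum_{\substack{j_1+\dots+j_n=j\\ j_i\geq 1}}\tilde q_{j_1,N}(t)\cdots\tilde q_{j_n,N}(t),
\]
a polynomial in $t$ of degree $\leq 3j$ with coefficients uniformly bounded in $N$; in particular $d_k\leq 3k$ depends only on $k$. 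Direct computation with the $j=1,2$ summands recovers the stated explicit formulas for $A_{1,N}$ and $A_{2,N}$.

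The main obstacle is to control the error uniformly in the wide range $|t|\leq\delta_r\sigma_N$, since the formal series $\sum_j A_{j,N}(t)/\sigma_N^j$ need not converge literally once $|t|$ is comparable to $\sigma_N^{1/3}$. I would split the interval. In the inner regime $|t|\leq\sigma_N^{1/3}$ one has $|Q_N(t)|\leq C|t|^3/\sigma_N$ (the $j=1$ term dominates), so truncating the tail $\sum_{j>r}\tilde q_{j,N}/\sigma_N^j$ and the Taylor series of $\exp$ at matching orders yields the claimed error by standard bookkeeping. In the outer regime $\sigma_N^{1/3}\leq|t|\leq\delta_r\sigma_N$, the inequality $|\phi_n(s)|\leq\exp(-c(K)V(X_n)s^2)$ for small $|s|$ gives $|\bbE(e^{it\bar S_N/\sigma_N})|\leq e^{-ct^2}$, which is super-polynomially smaller than every inverse power of $\sigma_N$; the right-hand side shares this property, so the estimate holds trivially once $\delta_r$ is chosen small enough that the above expansions are uniformly valid.
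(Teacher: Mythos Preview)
Your argument is correct and follows the same overall strategy as the paper---expand $\log\bbE(e^{it\bar S_N/\sig_N})$ as a cumulant series, isolate the quadratic term, and exponentiate---but the way you handle the possible degeneracy of individual variances differs. The paper first groups $\{1,\dots,N\}$ into consecutive blocks $I_1,\dots,I_{m_N}$ with $B_1\le V(S_{I_l})\le B_2$, so that the block sums have uniformly bounded moments and variance bounded away from zero; this reduces the statement to the classical iid-type setting, after which the paper simply invokes Feller's \S XVI.6. You instead keep the original summands and use the pointwise bound $|\Lambda_k(\bar X_n)|\le C_k(K)\,V(X_n)$ (from the moment--cumulant formula with all singleton blocks killed), which after summing gives $|\Lambda_k(\bar S_N)|\le C_k\sig_N^2$ directly. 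Your route is more self-contained and makes explicit why the coefficients of $A_{k,N}$ are uniformly bounded without any reduction; the paper's route is shorter because it can defer the actual expansion to a textbook reference. The inner/outer split you perform at scale $\sig_N^{1/3}$ is a standard device and works as stated once you note that $e^{-t^2/2}|t|^{3k}$ is decreasing on $[\sqrt{3k},\infty)$, so the polynomial factors in the right-hand side are harmless for large $|t|$.
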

The proof  is quite standard, so we just sketch the argument.
The idea is to 
fix some $B_2>B_1>0$, and 
to  partition $\{1,...,N\}$ into intervals  $I_1,...,I_{m_N}$ 
so that
$\DS 
B_1\leq \text{Var}(S_{I_l})\leq B_2
$
where for each $l$ we set $\DS S_{I_l}=\sum_{j\in I_l}X_i$. 
It is clear that $m_N/\sig_N^2$ is 
bounded away from $0$ and $\infty$  uniformly in $N$.
Recall next that there are constants  $C_p$, $p\geq2$ so that for any $n\geq1$ and $m\geq 0$ we have
\begin{equation}
\label{CenterMoments}
\left\|\sum_{j=n}^{n+m}\big(X_j-\bbE(X_j)\big)\right\|_{L^p}\leq C_p\left(1+\left\|\sum_{j=n}^{n+m}\big(X_j-\bbE(X_j)\big)\right\|_{L^2}\right).
\end{equation}
This is a consequence of the multinomial theorem and some elementary estimates, and we refer the readers to either Lemma 2.7 in \cite{DS}, or Theorem 6.17 in \cite{Pel} for such a result in a much more general settings. 
Using the latter estimates we get that the $L^p$-norms of $S_{I_l}$ are uniformly bounded in $l$. This reduces the problem to the case when the variance of $X_n$ is uniformly bounded from below, and all the moments of $X_n-\bbE(X_n)$ are uniformly bounded. In this case, the proposition follows by considering the Taylor expansion of the function $\ln \bbE\big(e^{it(S_N-\bbE(S_N))/\sig_N}\big)+\frac12t^2$, 
see \cite[\S XVI.6]{Feller}.

\begin{proposition}
\label{PrHalf}
Given a square integrable random variable $X$, let $\bar{X}=X-\bbE(X).$ Then for each $h\in\mathbb{R}$ we have
$$\left|\bbE(e^{ih \bar X})-1\right|\leq  \frac12 h^2V(X).$$
\end{proposition}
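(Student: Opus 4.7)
The plan is to use the fact that $\bar{X}$ is centered to subtract a linear term inside the expectation, and then apply a pointwise estimate on the imaginary exponential.

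First I would observe that $\mathbb{E}(\bar{X}) = 0$ by the definition of $\bar{X}$, so
\begin{equation*}
\mathbb{E}\bigl(e^{ih\bar{X}}\bigr) - 1 = \mathbb{E}\bigl(e^{ih\bar{X}} - 1\bigr) = \mathbb{E}\bigl(e^{ih\bar{X}} - 1 - ih\bar{X}\bigr),
\end{equation*}
where the last equality uses $\mathbb{E}(ih\bar{X}) = 0$. This reduction is the whole point: the naive estimate $|e^{ih\bar{X}} - 1| \leq |h\bar{X}|$ would only give a bound in terms of $\|X\|_{L^1}$, but subtracting the linear term lets us gain a power.

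Next I would invoke the elementary pointwise inequality $|e^{iu} - 1 - iu| \leq \tfrac{1}{2}u^2$ for all $u \in \mathbb{R}$, which follows from Taylor's theorem with remainder applied to $f(u) = e^{iu}$ (one has $|f''(u)| = 1$, so two integrations give the bound). Applying this with $u = h\bar{X}$ pointwise and then taking expectation yields
\begin{equation*}
\bigl|\mathbb{E}(e^{ih\bar{X}}) - 1\bigr| \leq \mathbb{E}\bigl|e^{ih\bar{X}} - 1 - ih\bar{X}\bigr| \leq \mathbb{E}\!\left(\tfrac{1}{2}h^2 \bar{X}^2\right) = \tfrac{1}{2}h^2\, V(X),
\end{equation*}
which is the desired inequality.

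There is no real obstacle here; the only subtlety is remembering to center before expanding (otherwise one loses the factor of $h^2$), and to justify the pointwise bound on $|e^{iu} - 1 - iu|$, which is a standard one-line computation. Square integrability of $X$ is exactly what is needed to ensure the final expectation is finite.
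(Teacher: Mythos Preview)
Your proof is correct and follows essentially the same approach as the paper: both arguments rest on the second-order Taylor expansion and the fact that $\bar X$ is centered. The only cosmetic difference is that the paper applies the integral form of the Taylor remainder directly to the characteristic function $\varphi(h)=\bbE(e^{ih\bar X})$ (using $\varphi'(0)=0$ and $|\varphi''(t)|\leq V(X)$), whereas you apply the pointwise bound $|e^{iu}-1-iu|\leq \tfrac12 u^2$ first and then take expectations; the two are interchangeable here.
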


\begin{proof}
Set $\varphi(h)=\bbE(e^{ih \bar X})$. Then by the integral form of the second order Taylor reminder we have 
$$
|\varphi(h)-\varphi(0)-h\varphi'(0)|=|\varphi(h)-\varphi(0)|=\left|\int_0^h(t-h)\varphi''(t)dt\right|$$
$\hskip4.7cm \DS \leq 
V(X)\int_{0}^{|h|}(|h|-t)dt= \frac12 h^2V(X).
$
\end{proof}

\subsection{ Non resonant intervals.}
\label{SSNonRes}
As in almost all the proofs of the LLT, the  starting point in the proof of Theorem \ref{ThEdgeMN} (and Theorem \ref{IntIndThm}) is that 
for  $k, N\in\bbN$ we have
\begin{equation}
\label{EqDual}
2\pi \bbP(S_N=k)=\int_{0}^{2\pi} e^{-itk}\bbE(e^{it S_N})dt.
\end{equation}

Denote $\bbT=\bbR/2\pi \bbZ.$ Let
$$ \Phi_N(t)=\bbE(e^{it S_N})=\prod_{n=1}^N \phi_n(t) \quad
\text{where}\quad \phi_n(t)=\bbE(e^{it X_n}).$$

Divide $\bbT$ into intervals $I_j$ of small size $\delta$
such that each interval contains at most one resonant point and this point is strictly inside
$I_j.$ We call an interval resonant if it contains a resonant point inside. Then
\begin{equation}\label{SplitInt}
2\pi \bbP(S_N=k)=\sum_{j}\int_{I_j} e^{-itk}\bbE(e^{it S_N})dt.
\end{equation}
We will consider  the integrals appearing in the above sum individually.

\begin{lemma}\label{Step1}
There are constants $C,c>0$ which depend only on $\del$ and $K$ 
so that for any non-resonant interval $I_j$ and $N\geq1$ we have
$$  \int_{I_j} |\Phi_N(t)| dt\leq C e^{-c V_N}. $$
\end{lemma}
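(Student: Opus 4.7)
The plan is to establish a pointwise bound of the form $|\Phi_N(t)|\leq e^{-cV_N}$ on every non-resonant interval $I_j$, after which integration over an interval of length $\leq\del$ is immediate. By the construction of the partition, each non-resonant interval lies at distance $\geq d_0>0$ from the resonant set $\cR$, with $d_0$ depending only on $\del$ and $K$.

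The starting point is the identity
\[
|\phi_n(t)|^2=\bbE[\cos(t(X_n-X_n'))],
\]
where $X_n'$ is an independent copy of $X_n$, which combined with the elementary estimate $1-\cos u\geq (2/\pi^2)\,\text{dist}(u,2\pi\bbZ)^2$ gives
\[
1-|\phi_n(t)|^2\geq \frac{2}{\pi^2}\,\bbE\bigl[\text{dist}(t(X_n-X_n'),2\pi\bbZ)^2\bigr].
\]
The crucial geometric observation is that for any nonzero integer $y$ with $|y|\leq 2K$, the image of $(2\pi/|y|)\bbZ$ in $\bbT$ is contained in $\cR$, so for $t$ at distance $\geq d_0$ from $\cR$ we have
\[
\text{dist}(ty,2\pi\bbZ)=|y|\cdot\text{dist}_{\bbT}\bigl(t,(2\pi/|y|)\bbZ/2\pi\bbZ\bigr)\geq |y|\,d_0.
\]
This is only valid when $2Kd_0\leq\pi$, which we can ensure by taking $\del$ small enough (depending only on $K$). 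Since $|X_n-X_n'|\leq 2K$ and $\bbE[(X_n-X_n')^2]=2V(X_n)$, combining the two displays yields
\[
1-|\phi_n(t)|^2\geq \frac{4 d_0^2}{\pi^2}\,V(X_n).
\]

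Applying $1-x\leq e^{-x}$ and taking the product over $n=1,\dots,N$ produces the uniform bound $|\Phi_N(t)|\leq \exp\bigl(-\tfrac{2 d_0^2}{\pi^2}V_N\bigr)$ on $I_j$, and integration gives the claim with $c=2d_0^2/\pi^2$ and $C=\del$. The main potential obstacle is the geometric step: one must choose $\del$ (depending only on $K$) so that non-resonant intervals avoid $\cR$ by a margin $d_0\leq\pi/(2K)$, which is precisely what keeps the amplified distance $|y|d_0$ within the valid range for the cosine lower bound. This matching between the definition of resonant points (all rationals with denominator $\leq 2K$) and the support constraint $|X_n-X_n'|\leq 2K$ is the heart of the argument; the rest is routine.
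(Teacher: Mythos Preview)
Your proof is correct and follows a somewhat different route from the paper's. The paper isolates the two largest atoms $\hat q_n,\bar q_n$ of $X_n$, observes that on a non-resonant interval the angle between $e^{it\hat j_n}$ and $e^{it\bar j_n}$ is bounded below (since $|\hat j_n-\bar j_n|\leq 2K$), and deduces $|\phi_n(t)|\leq 1-c_1\bar q_n$; it then invokes the inequality $V(X_n)\leq c_2\bar q_n$ to pass from $\sum_n\bar q_n$ to $V_N$. Your symmetrization argument bypasses this two-step structure: the identity $|\phi_n(t)|^2=\bbE[\cos(t(X_n-X_n'))]$ together with the lower bound $\text{dist}(ty,2\pi\bbZ)\geq |y|\,d_0$ for all nonzero $|y|\leq 2K$ delivers $V(X_n)$ directly in the exponent via $\bbE[(X_n-X_n')^2]=2V(X_n)$. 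Both proofs rest on the same geometric fact---that every lattice $(2\pi/m)\bbZ$ with $m\leq 2K$ lies inside the resonant set---but your packaging is a bit cleaner and gives an explicit constant. One minor remark: the constraint $2Kd_0\leq\pi$ you flag is actually automatic, since the sublattice with $m=2K$ already forces every point of $\bbT$ to lie within $\pi/(2K)$ of a resonant point; no additional smallness of $\del$ is needed for that step.
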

\begin{proof}
Let $\hq_n, \brq_n$ be the largest and the second largest values of $\bbP(X_n=j)$
and let $\hj_n, \brj_n$ be the corresponding values.
Note that
\begin{equation}
\label{phiNSum}
 \phi_n(t)=\hq_n e^{it \hj_n}+\brq_n e^{it \brj_n}+\sum_{l\neq \hj_n, \brj_n}
\bbP(X_n=l) e^{i t l}. 
\end{equation}
Since $I_j$ is non resonant, the angle between $e^{it \hj_n}$ and 
$e^{it \brj_n}$ is uniformly bounded from below. Indeed if this was not the case we would have
$t \brj_n-t\hj_n\approx 2\pi l_n$ for some $l_n\in\bbZ.$ 
Then $t\approx \frac{2\pi l_n}{m_n}$ where $m_n=\brj_n-\hj_n$ contradicting  the assumption
that $I_j$ is non-resonant. Accordingly $\exists c_1>0$ such that
$\DS \left|e^{it \hj_n}+e^{it \brj_n}\right|\leq 2-c_1. $ Therefore 
$$ \left|\hq_n e^{it \hj_n}+\brq_n e^{it \brj_n}\right|
\leq { (\hq_n-\brq_n)+\brq_n \left| e^{it \hj_n}+e^{it \brj_n} \right|}
\leq \hq_n+\brq_n-2c_1 \brq_n. $$
Plugging this into \eqref{phiNSum}, we conclude that 
$|\phi_n(t)|\leq 1-2c_1 \brq_n$ for $t\in I_j.$ Multiplying these estimates  over
$n$ and using that $1-x\leq e^{-x},\, x>0$, we get
$$ |\Phi_N(t)|\leq e^{-2c_1\sum_n \brq_n}. $$
Since $V(X_n)\leq c_2 \brq_n$ for a suitable constant $c_2$ we can rewrite the preceding as
\begin{equation}\label{NonResDec}
 |\Phi_N(t)|\leq e^{-c_3V_N},\, c_3>0. 
\end{equation}
Integrating over $I_j$ we obtain the result.
\end{proof}

\subsection{Prokhorov estimates}
Next we consider the case where $I_j$ contains a nonzero resonant point $t_j=\frac{2\pi l}{m}.$

\begin{lemma}\label{Step2}
There is a constant $c_0$ which depends only on $K$ so that for any nonzero resonant point $t_j=2\pi l/m$ we have
\begin{equation}\label{Roz0}
\sup_{t\in I_j}|\bbE(e^{it S_N})|\leq e^{-c_0 M_N(m)}.
\end{equation}
Thus, for any $r\geq1$ there is a constant $R=R(r,K)$
such that if $M_N(m)\geq R \ln V_N$,  then the  integral  
$\int_{I_j} e^{-itk}\bbE(e^{it S_N})dt$  is $o(\sigma_N^{-r})$ uniformly in $k$,
and so it only contributes to the error term.
\end{lemma}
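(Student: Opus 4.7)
The plan is to establish a pointwise bound $|\phi_n(t)|^2 \leq 1 - c_0\,q_n(m)$ uniform for $t \in I_j$, where $c_0 = c_0(K) > 0$; multiplying these bounds over $n$ will yield the desired estimate on $|\Phi_N(t)|$. I take $t_j = 2\pi l/m$ in lowest terms, so that $\gcd(l, m) = 1$ (if $\gcd(l,m) > 1$, the resonant point has smaller natural denominator $m/\gcd(l,m)$, and I would apply the lemma with that reduced value).

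The starting identity, parallel to the one in the proof of Lemma \ref{Step1}, is
\[
1 - |\phi_n(t)|^2 = \sum_{k \neq k'} \bbP(X_n = k)\,\bbP(X_n = k')\bigl(1 - \cos(t(k - k'))\bigr),
\]
in which every summand is nonnegative. To lower bound the sum, I keep only the pairs $(k, k')$ in which $k$ lies in the most likely residue class $\hat{j}_n(m)$ modulo $m$ (of probability $\hat p_n \geq 1/m \geq 1/(2K)$) and $k'$ lies in the second most likely class $j_n(m)$ (of probability $q_n(m)$), together with the symmetric pairs. For each such pair, $k - k' \equiv \hat{j}_n(m) - j_n(m) \not\equiv 0 \pmod m$, and coprimality of $l$ and $m$ gives $l(k-k') \not\equiv 0 \pmod m$; hence
\[
\cos\!\bigl(t_j(k-k')\bigr) \;\leq\; \cos(2\pi/m) \;\leq\; \cos(\pi/K) \;<\; 1.
\]
Using $|k - k'| \leq 2K$ together with Lipschitz continuity of cosine, this upper bound persists on all of $I_j$ (with a slightly worse constant $C_K > 0$) provided $|I_j|$ is chosen small enough in terms of $K$; this is possible because distinct resonant points lie at distance at least $2\pi/(2K)^2$. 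Summing the retained contributions yields
\[
1 - |\phi_n(t)|^2 \;\geq\; 2\,C_K\,\hat p_n\,q_n(m) \;\geq\; c_0\,q_n(m),
\]
and taking products over $n$ via $(1-x)^{1/2} \leq e^{-x/2}$ gives $|\Phi_N(t)| \leq e^{-(c_0/2) M_N(m)}$, which is \eqref{Roz0} after relabeling.

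The second assertion then follows immediately: since $|I_j| \leq 2\pi$,
\[
\Bigl|\int_{I_j} e^{-itk}\,\Phi_N(t)\,dt\Bigr| \;\leq\; 2\pi\, e^{-c_0 M_N(m)},
\]
and choosing $R = R(r, K)$ so that $c_0 R > r$ makes this at most $2\pi V_N^{-c_0 R} = o(V_N^{-r/2}) = o(\sigma_N^{-r})$, uniformly in $k$, whenever $M_N(m) \geq R \ln V_N$. The main obstacle I anticipate is upgrading from a pointwise bound at $t_j$---the residue-mod-$m$ analogue of the two-largest-values argument from Lemma \ref{Step1}---to a uniform bound throughout $I_j$; this is handled by choosing $|I_j|$ small enough in terms of $K$ so that the Lipschitz perturbation of each cosine term cannot close the gap $1 - \cos(2\pi/m)$.
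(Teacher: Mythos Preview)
Your proof is correct and follows essentially the same approach as the paper: both start from the identity $1-|\phi_n(t)|^2=\sum_{k,k'}\bbP(X_n=k)\bbP(X_n=k')(1-\cos(t(k-k')))$, isolate the contribution of pairs with $k-k'\not\equiv 0\pmod m$, and use that for such pairs $\cos(t(k-k'))$ is bounded away from $1$ throughout $I_j$. The only cosmetic differences are that the paper groups by the difference $j=k-k'$ (introducing $\tilde P_j$) and argues the separation of $t$ from the zeros of $1-\cos(tj)$ geometrically rather than via your Lipschitz perturbation, and that the paper also tracks an additional $(t-t_j)^2$ factor in the exponent which, as it explicitly remarks, is not needed for \eqref{Roz0}.
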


\begin{proof}
The estimate (\ref{Roz0}) follows from the arguments in \cite{Rozanov}, but 
for readers' convenience we recall its proof. Let $X$ be an integer-valued random variable so that $\|X\|_{L^\infty}\leq K$. Let $t_0=2\pi l/m$ be a nonzero resonant point, where $\gcd(l,m)=1$. Let $t\in\bbT$ be so that 
\begin{equation}
\label{Neart0}
|t-t_0|\leq\del, 
\end{equation}
where $\del$ is a small positive number. Let $\phi_X(\cdot)$ denote the characteristic function of $X$. Since $x\leq e^{x-1}$ for any real $x$ we have
\[
|\phi_X(t)|^2\leq e^{|\phi(t)|^2-1}.
\]
Next, we have 
\[
|\phi_X(t)|^2-1=\phi(t)\phi(-t)-1=\sum_{j=-2K}^{2K}\sum_s\tilde P_j
 \left[\cos(t_j)-1\right]
\]
where 
\[
\tilde P_j=\sum_{s}\bbP(X=s)\bbP(X=j+s).
\]
Fix some $-2K\leq j\leq 2K$. 
 We claim that if $\del$ in \eqref{Neart0} is small enough and $j\not\equiv 0\text{ mod }m$ then for each
integer $w$ we have $|t-2\pi w/j|\geq \ve_0$ for some $\ve_0>0$ which depends only on $K$. This follows from the fact that $-2K\leq j\leq 2K$ and that $2\pi w/j\not=t_0$ (and there is a finite number of resonant points). Therefore,
\[
\cos(tj)-1\leq -\del_0
\]
for some $\del_0>0$.
On the other hand, if $j=km$ for some integer $k$ then with $w=lk$ we have
\begin{eqnarray*}
\cos(tj)-1=-2\sin^2(tj/2)=-2\sin^2\left((tj-2\pi w)/2\right)\\=-2\sin^2\left(j(t-t_0)/2\right)\leq 
-\del_1(t-t_0)^2
\end{eqnarray*}
for some $\del_1>0$ (assuming that $|t-t_0|$ is small enough). We conclude that 
\[
|\phi_X(t)|^2-1\leq -\del_0\sum_{j\in A}\tilde P_j-\del_1(t-t_0)^2\sum_{j\in B}\tilde P_j
\]
where $A=A(X)$ is the set of $j$'s between $-2K$ and $2K$ so that $j\not\equiv 0\text{ mod }m$  and $B=B(X)$ is its complement in $\bbZ\cap[-2K,2K]$. Let $s_0$ be the most likely 
 residue  of $X$  mod $m$ and $s_1$ be the second
most likely residue class.
Since 
$$ \bbP(X\equiv s_0 \text{ mod }m)\geq \frac{1}{m}
\quad\text{and}\quad 
\bbP(X\equiv s_1 \text{ mod }m)=q_m(X) $$
it follows that
$\DS \sum_{j\in A} \tilde P_j\geq \frac{q_m(X)}{m}.$

Combining this with the trivial bound 
$\DS \sum_{j\in B} \tilde P_j\geq \bbP^2(X\equiv s_0)\geq \frac{1}{m^2}$ we obtain
$$
|\phi_X(t)|\leq \exp-\left[\frac12\left(\frac{\del_0 q_m(X)}{m} +
\frac{\del_1 (t-t_0)^2}{m^2}\right)\right].
$$
Applying the above with $t_0=t_j$ and $X=X_n$, $1\leq n\leq N$ we get that 
\begin{equation}\label{CharEstRoz}
|\Phi_N(t)|\leq e^{-c_0M_N(m)-\bar{c}_0 N(t-t_j)^2 }\leq e^{-c_0M_N(m)}
\end{equation}
where $c_0$ is some constant.
\end{proof}

\begin{remark}
Using the first inequality in (\ref{CharEstRoz}) and 
arguing as in \cite[page 264]{Rozanov}, 
we can deduce that
there are positive constants $C, c_1, c_2$ such that
\begin{equation}\label{RozArg}
\int_{I_j}|\bbE(e^{it S_N})|dt\leq C\left(e^{-c_1\sig_N}+\frac{e^{-c_2M_N(m)}}{\sig_N}\right).
\end{equation}
This estimate plays an important role in the proof of the SLLT in \cite{Rozanov}, but for our purposes a weaker bound
\eqref{Roz0} is enough. Note also that in order to prove (\ref{Roz0}) we could have just used the trivial inequality $\cos(t_j)-1\leq 0$ when $j\equiv 0\text{ mod }m$, but we have decided to present this part from \cite{Rozanov} in full.
\end{remark}

\begin{remark}\label{R choice}
Let $d_{\cR}$ be the minimal distance between two different resonant points. Then, when $\del<2d_{\cR}$, we can take $\del_0=1-\cos(d_{\cR})$ in the proof of Lemma \ref{Step2}. Therefore, we can take $c_0=\frac{1-\cos(d_{\cR})}{4K}$ in \eqref{Roz0}. 	Hence
Lemma \ref{Step2} holds with
 $R(r,K)=\frac{r+1}{2c_0}.$
\end{remark}
\vskip0.2cm

\subsection{ Proof of Theorem \ref{ThEdgeMN}} 
\label{Cmplt1}
Fix some $r\geq1$.
Lemmas \ref{Step1} and \ref{Step2} show that if $M_N\geq R(r,K)\ln V_N$,  
then all the integrals in the right hand side of (\ref{SplitInt}) are of order $o(\sig_N^{-r})$, except for the one corresponding to the resonant point $t_j=0$. That is, for any $\del>0$ small enough, uniformly in $k$ we have
$$2\pi \bbP(S_N=k)=\int_{-\del}^\del e^{-ih k}\Phi_N(h)dh+o(\sig_N^{-r}).
$$
In order to complete the proof of Theorem \ref{ThEdgeMN}, we need to expand the above integral. Making a change of variables $h\to h/\sig_N$ and using Proposition \ref{PropEdg}, 
we conclude that if $\del$ is small enough then
$$
\int_{-\del}^\del e^{-ih k}\Phi_N(h)dh=$$
$$\sig_N^{-1}\int_{-\del\sig_N}^{\del\sig_N}e^{-ihk_N}e^{-h^2/2}\left(1+\sum_{u=1}^r \frac{A_{u,N}(h)}{\sig_N^k}+\frac{h^{r+1}}{\sig_N^{r+1}} O(1)\right)dh
$$
where $k_N=\left(k-\bbE(S_N)\right)/\sig_N$. Since the coefficients of the polynomials $A_{u,N}$ are uniformly bounded in $N$, we can just replace the above integral with the corresponding integral over all $\bbR$ (i.e. replace $\pm\del\sig_N$ with $\pm\infty$). Now the Edgeworth expansions are
achieved using that for any nonnegative integer $q$ we have that $(it)^qe^{-t^2/2}$ is the Fourier transform of the $q$-th derivative of $\textbf{n}(t)=\frac{1}{\sqrt{2\pi}}e^{-t^2/2}$ and that for any real $a$,
\begin{equation}\label{Fourir}
\int_{-\infty}^\infty e^{-iat}\widehat{\textbf{n}^{(q)}}(t) dt=\textbf{n}^{(q)}(a)=\frac{1}{\sqrt{2\pi}}(-1)^{q}H_q(a)e^{-a^2/2}
\end{equation}
where $H_q(a)$ is the $q$-th Hermite polynomial.

\section{Generalized Edgeworth expansions: Proof of Theorem Theorem~ \ref{IntIndThm}}
\label{ScGEE}
\subsection{Contributions of resonant intervals.}
Let $r\geq1$. As in the proof of Theorem \ref{ThEdgeMN}, our starting point is the equality
\begin{equation}
\label{EqDual}
2\pi \bbP(S_N=k)=\int_{0}^{2\pi} e^{-itk}\bbE(e^{it S_N})dt=\sum_{j}\int_{I_j} e^{-itk}\bbE(e^{it S_N})dt
\end{equation}
which holds for any $k\in\bbN$. We will consider  the integrals appearing in the above sum individually.
By Lemma \ref{Step1}  the integrals over non-resonant intervals are of order $o(\sig_N^{-r})$, and so they can be disregarded. Moreover, in \S \ref{Cmplt1} we have expanded the integral over the resonant interval containing $0$.
 Now we will see that in the case $M_N< R(r,K) \ln V_N$ 
the contribution of nonzero resonant points need not be negligible.

Let $t_j=\frac{2\pi l}{m}$ be a nonzero resonant point so that  $M_N(m)<R(r,K) \ln V_N$ and let $I_j$ be the resonant interval containing it.  Theorem~\ref{IntIndThm} will follow from an appropriate expansion of the integral
$$\int_{I_j} e^{-itk}\bbE(e^{it S_N})dt.$$
We need the following simple result, which for readers' convenience is formulated as a lemma.
\begin{lemma}\label{EpsLem}
There exists $\breps>0$ so that for each $n\geq1$ with $q_n(m)\leq \breps$ we have  $|\phi_n(t_j)|\geq\frac12$. In fact, we can take $\breps=\frac1{4m}$.
\end{lemma}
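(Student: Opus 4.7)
The plan is to group the terms in $\phi_n(t_j)$ according to residues modulo $m$ and then apply the triangle inequality, exploiting the fact that when $q_n(m)$ is small, the mass of $X_n$ modulo $m$ is highly concentrated on a single residue class.

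First I would write
\[
\phi_n(t_j)=\bbE(e^{i t_j X_n})=\sum_{r=0}^{m-1}p_r\,e^{2\pi i l r/m},\qquad p_r:=\bbP(X_n\equiv r\text{ mod }m),
\]
which is valid because $e^{i t_j k}=e^{2\pi i l k/m}$ depends only on $k\bmod m$. Let $s_n$ be the most likely residue of $X_n$ modulo $m$, so that $p_{s_n}=\hq_n(m)$ is the largest of the $p_r$ and, by definition, $p_r\leq q_n(m)$ for every $r\neq s_n$.

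Next I would factor out the dominant phase and apply the triangle inequality:
\[
|\phi_n(t_j)|=\left|\hq_n(m)+\sum_{r\neq s_n}p_r\,e^{2\pi i l(r-s_n)/m}\right|\geq \hq_n(m)-\sum_{r\neq s_n}p_r = 2\hq_n(m)-1,
\]
since $\sum_{r\neq s_n}p_r=1-\hq_n(m)$. Thus it suffices to show that $\hq_n(m)\geq 3/4$ whenever $q_n(m)\leq \breps:=\frac1{4m}$.

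Finally, under the hypothesis $q_n(m)\leq \frac1{4m}$, every $p_r$ with $r\neq s_n$ is bounded by $\frac1{4m}$, and there are at most $m-1$ such terms, so
\[
1-\hq_n(m)=\sum_{r\neq s_n}p_r\leq \frac{m-1}{4m}<\frac14,
\]
giving $\hq_n(m)>3/4$ and therefore $|\phi_n(t_j)|\geq 2\hq_n(m)-1>1/2$. There is no real obstacle here; the only thing one has to be careful about is that $q_n(m)$ was defined as the second largest of the $p_r$'s, so bounding it also bounds every $p_r$ away from $s_n$, which is exactly what makes the triangle inequality estimate effective.
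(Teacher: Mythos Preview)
Your proof is correct and follows essentially the same route as the paper's: both group the terms of $\phi_n(t_j)$ by residues mod $m$, isolate the dominant residue $s_n$, and apply the triangle inequality together with the bound $p_r\le q_n(m)$ for $r\neq s_n$. The paper phrases the estimate as $|\phi_n(t_j)|\ge 1-2mq_n(m)$ (by writing $\phi_n(t_j)=e^{it_j s_n}-\sum_{u\neq s_n}(e^{it_j s_n}-e^{it_j u})p_u$), while you obtain the slightly sharper $|\phi_n(t_j)|\ge 2\hq_n(m)-1\ge 1-2(m-1)q_n(m)$, but the argument and the resulting choice $\breps=\frac{1}{4m}$ are the same.
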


\begin{proof}
Recall that $t_j=2\pi l/m$. The lemma follows since for any random variable $X$ we have
$\DS |\bbE(e^{i t_j X})|=$
$$\left|e^{it_js(m,X)}-\sum_{u\not\equiv s(m,X)\text{ mod m}}\big(e^{it_j s(m,X)}-e^{it_j u}\big)P(X\equiv u\text{ mod } m)\right|
$$
$$
\geq 1-2mq(m,X)
$$
where $s(m,X)$ is the most likely value of $X\text{ mod }m$ and
$q(m,X)$ is the second largest value among $P(X\equiv u\text{ mod } m)$, $u=0,1,2,...,m-1$. Therefore, we can take  $\breps=\frac1{4m}$.
\end{proof}
Next, set $\breps=\frac{1}{8K}$ and let $N_0=N_0(N,t_j,\breps)$ be the number of all $n$'s between $1$ to $N$ so that $q_n(m)>\breps$.
Then $N_0\leq \frac{R \ln V_N}{\breps}$ because $M_N(m)\leq R \ln V_N.$
By permuting the indexes $n=1,2,...,N$ if necessary we can assume that $q_n(m)$ is non increasing.
Let $N_0$ be the largest number such that $q_{N_0}\geq \breps.$ 
  Decompose
\begin{equation}
\label{NPert-Pert}
\Phi_N(t)=\Phi_{N_0}(t) \Phi_{N_0, N}(t) 
\end{equation}
where
$\DS \Phi_{N_0, N}(t)=\prod_{n=N_0+1}^N \phi_n(t).$

\begin{lemma}\label{Step3}
If the length $\del$ of $I_j$ is small enough then for any $t=t_j+h\in I_j$ and $N\geq1$ we have
\[ 
\Phi_{N_0,N}(t)=\Phi_{N_0,N}(t_j)\Phi_{N_0,N}(h)  \Psi_{N_0,N}(h)
\]
 where
$$ \Psi_{N_0,N}(h)=\exp\left[O(M_N(m))\sum_{u=1}^\infty (O(1))^u h^u\right]. $$
\end{lemma}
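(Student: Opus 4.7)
\emph{Proof sketch.} Writing $\Phi_{N_0,N}(t) = \prod_{n=N_0+1}^N \phi_n(t)$, the asserted identity reduces to showing that $\Psi_{N_0,N}(h) = \prod_{n=N_0+1}^N \psi_n(h)$ has the claimed form, where
\[
\psi_n(h) := \frac{\phi_n(t_j+h)}{\phi_n(t_j)\,\phi_n(h)}.
\]
The plan is to prove $|\psi_n - 1| = O(q_n(m))$ uniformly on a fixed complex disk about $0$, deduce the same bound for $\log \psi_n$, estimate its Taylor coefficients via Cauchy's formula, and sum over $n > N_0$.

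By the choice $\breps = 1/(8K)$ and Lemma \ref{EpsLem}, $|\phi_n(t_j)| \geq 1/2$ for every $n > N_0$. Since $|\phi_n'| \leq K$, shrinking $\del$ ensures $|\phi_n(h)| \geq 1/2$ and $|\phi_n(t_j+h)| \geq 1/4$ on $I_j$. Because each $\phi_n$ is entire (the $X_n$ are bounded and integer-valued), the same lower bounds persist for complex $h$ in a disk $\{|h| \leq \rho\}$ with $\rho = \rho(K) > 0$; thus $\psi_n$ is holomorphic and bounded by a universal constant on this disk, with $\psi_n(0) = 1$.

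Let $s_n$ be the most likely residue of $X_n$ modulo $m$, $\alpha_n = e^{2\pi i l s_n/m}$, $p_n^s = \bbP(X_n \equiv s \bmod m)$, and $c_s = e^{2\pi i l(s-s_n)/m} - 1$. Conditioning on $X_n \bmod m$ and using $e^{it_j X_n} = e^{2\pi i l X_n/m}$ gives
\[
\phi_n(t_j) = \alpha_n(1+\epsilon_n), \qquad \phi_n(t_j+h) = \alpha_n\bigl(\phi_n(h) + \gamma_n(h)\bigr),
\]
with $\epsilon_n = \sum_{s \neq s_n} c_s p_n^s$ and $\gamma_n(h) = \sum_{s \neq s_n} c_s \bbE\bigl[e^{ihX_n}\mathbf{1}_{X_n \equiv s}\bigr]$. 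Since $|c_s| \leq 2$, at most $2K$ residues contribute, and $p_n^s \leq q_n(m)$ for $s \neq s_n$, we obtain $|\epsilon_n|, |\gamma_n(h)| \leq C\, q_n(m)$ uniformly on $|h| \leq \rho$. Since $\gamma_n(0) = \epsilon_n$ and $\phi_n(0) = 1$, the numerator of
\[
\psi_n(h) - 1 = \frac{\gamma_n(h) - \epsilon_n \phi_n(h)}{(1+\epsilon_n)\phi_n(h)}
\]
is $O(q_n(m))$, while the denominator is bounded away from $0$; hence $|\psi_n(h) - 1| \leq C' q_n(m)$. After shrinking $\breps$ further if necessary so that $C' q_n(m) \leq 1/2$, we get $|\log \psi_n(h)| \leq 2C' q_n(m)$ on the disk.

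Cauchy's estimate on $|h| = \rho$ then yields Taylor coefficients $a_{n,u}$ of $\log\psi_n(h) = \sum_{u\geq 1} a_{n,u} h^u$ with $|a_{n,u}| \leq 2C' q_n(m)\rho^{-u}$, and summing over $n > N_0$ gives $\log \Psi_{N_0,N}(h) = \sum_{u\geq 1} b_u h^u$ with $|b_u| \leq 2C' M_N(m)\rho^{-u}$, which is precisely the form announced. The main obstacle is extracting the linear factor $q_n(m)$ (rather than $\sqrt{q_n(m)}$, which a naive Cauchy–Schwarz argument would give) in the bound on $\psi_n - 1$: this forces the explicit decomposition via the phase $\alpha_n$ above, and relies on boundedness of the $X_n$ to guarantee that only $O(K)$ residue classes contribute to both $\epsilon_n$ and $\gamma_n$.
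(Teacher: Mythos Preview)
Your argument is correct but proceeds differently from the paper's. The paper expands $\psi_n(h)$ as an explicit Taylor series in $h$, then expands $\log(1+x)$ term by term, and uses the algebraic identity
\[
\frac{\bbE\bigl[e^{it_j Y_n}(\bar X_n)^{j_r}\bigr]}{\bbE(e^{it_j Y_n})}=\bbE\bigl[(\bar X_n)^{j_r}\bigr]+O(q_n(m))O(C^{j_r})
\]
to see that the pure-moment pieces recombine into $\bbE(e^{ih\bar X_n})=\brphi_n(h)$, leaving an $O(q_n(m))$ correction at each order. Your route is complex-analytic: you bound $|\psi_n-1|$ uniformly on a disk and invoke Cauchy's estimates, bypassing the power-series bookkeeping entirely. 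This is cleaner, but the paper's computation has a payoff you lose: it produces explicit formulas for the first two Taylor coefficients of $\log\Psi_{N_0,N}$ (the quantities $a_{n,j}$ and $b_{n,j}$ of Remark~\ref{CoeffRem}), and these formulas are used later in the paper---for instance in defining $C_{1,N}$ in \eqref{C 1 N}, in Corollary~\ref{CrFirstNonEdge}, and in the second-order expansion of Section~\ref{Sec2nd}. Your Cauchy bound gives only $|a_{n,u}|\leq Cq_n(m)\rho^{-u}$, which suffices for the lemma as stated but would force you to redo the coefficient computation separately when it is needed. One small remark: your step ``shrinking $\breps$ further if necessary'' deviates from the paper's fixed choice $\breps=1/(8K)$, but this is harmless since all that matters downstream is that $\breps$ be a positive constant depending only on $K$; alternatively you could note that the numerator $\gamma_n(h)-\epsilon_n\phi_n(h)$ vanishes at $h=0$, giving $|\psi_n-1|=O(q_n(m)|h|)$ on the disk, and then shrink $\rho$ instead.
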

\begin{proof}
 Denote
$$ \mu_n=\bbE(X_n), \quad \brX_n=X_n-\mu_n, \quad \brphi_n(t)=\bbE(e^{it \brX_n}).$$ 
Let $j_n(m)$ be the most likely residue mod $m$ for $X_n.$ Decompose
$$ \brX_n=s_n+Y_n+Z_n$$ where $Z_n\in m\bbZ$,  $s_n=j_n(m)-\mu_n$, so that 
$\bbP(Y_n\neq 0)\leq m q_n(m).$ Then for $t=t_j+h$,
\begin{equation}
\label{IndCharTaylor}
 \brphi_n(t)=e^{i t_j s_n} \bbE\left(e^{i t_j Y_n} e^{ih \brX_n}\right)=\brphi_n(t_j)  \psi_n(h)
 \end{equation}
 where 
$$  \psi_n(h)=
\left(1+
\frac{i h \bbE(e^{i t_j Y} \brX_n)-\frac{h^2}{2} \bbE(e^{i t_j Y} (\brX_n)^2)+\dots}{\bbE(e^{i t_j Y_n})}\right) . $$
Next, using that for any $x\in(-1,1)$ we have 
$$1+x=e^{\ln (1+x)}=e^{x-x^2/2+x^3/3-...}$$ 
we obtain that for $h$ close enough to $0$,
\begin{equation}\label{Expand}
\psi_n(h)=
\exp\left(\sum_{k=1}^\infty\frac{(-1)^{k+1}}{k}\left(\frac1{\bbE(e^{it_jY_n})}\sum_{q=1}^\infty \frac{(ih)^q}{q!}\bbE(e^{it_j Y_n}(\bar X_n)^q)\right)^k\right)
\end{equation}
\begin{eqnarray}
=\exp\left(\sum_{k=1}^\infty\frac{(-1)^{k+1}}{k}\sum_{1\leq j_1,...,j_k}\frac1{(\bbE(e^{it_jY_n}))^k}\prod_{r=1}^{k}\frac{(ih)^{j_r}}{j_r!}\bbE(e^{it_j Y_n}(\bar X_n)^{j_r})\right)\nonumber\\=
\exp\left(\sum_{u=1}^\infty\left(\sum_{k=1}^{u}\frac{(-1)^{k+1}}{k}\sum_{j_1+...+j_k=u}\,\prod_{r=1}^{k}\frac{\bbE(e^{it_j Y_n}(\bar X_n)^{j_r})}{\bbE(e^{it_j Y_n})j_r!}\right)(ih)^u\right).\nonumber
\end{eqnarray}
Observe next that 
\[
\bbE[e^{it_j Y_n}(\bar X_n)^{j_r}]=\bbE\left[(e^{it_j Y_n}-1)\big((\bar X_n)^{j_r}-\bbE[(\bar X_n)^{j_r}]\big)\right]+
\bbE[(\bar X_n)^{j_r}]\bbE(e^{it_j Y_n})
\]
and so with $C=2K$, we have
\[
\frac{\bbE[e^{it_j Y_n}(\bar X_n)^{j_r}]}{\bbE(e^{it_j Y_n})}=O(q_n(m))O(C^{j_r})+\bbE[(\bar X_n)^{j_r}].
\]
Plugging this into (\ref{Expand}) and using
 that for 
 $h$ small enough,
\begin{equation*}
\exp\left[
\sum_{u=1}^\infty\left(\sum_{k=1}^u\frac{(-1)^{k+1}}k\sum_{j_1+...+j_k=u}\prod_{r=1}^k\frac{\bbE(\brX_n^{j_r})}{j_r!}\right)(ih)^u\right]=\bbE\left(e^{ih \brX_n}\right)
\end{equation*}
we conclude that 
$$
 \psi_n(h)=\bbE(e^{ih\bar X_n})
\exp\left[\sum_{u=1}^\infty(O(1))^uO(q_n(m))h^u\right].$$
Therefore, 
\[ 
\Phi_{N_0,N}(t)=\Phi_{N_0,N}(t_j)\Phi_{N_0,N}(h)  \Psi_{N_0,N}(h)
\]
 where
$\DS \Psi_{ N_0, N}(h)=\exp\left[O(M_N(m))\sum_{u=1}^\infty (O(1))^u h^u\right]. $
\end{proof}
\begin{remark}\label{CoeffRem}
 We will see in \S\ref{Fin} that the coefficients of the polynomials appearing in Theorem~\ref{IntIndThm}
depend on the coefficients of the power series $\Psi_{N_0, N}(h)$ 
(see,  in particular, \eqref{Cj(k)}).
The first term in this  series is $\DS ih \sum_{n={N_0+1}}^N a_{n,j}$, where 
\begin{equation}\label{a n,j}
a_{n,j}=\frac{\bbE[(e^{it_j Y_n}-1)\bar X_n]}{\bbE(e^{i t_j Y_n})}=\frac{\bbE(e^{it_j X_n}\bar X_n)}{\bbE(e^{it_j X_n})}
\end{equation}
while the second term is $\DS \frac{h^2}{2} \sum_{n={N_0+1}}^N b_{n,j}$, where 
\begin{equation}
\label{SecondTerm}
 b_{n,j}=\frac{\bbE[(e^{it_j Y_n}-1)\bar X_n]^2}{\bbE(e^{i t_j Y_n})^2}-\frac{\bbE[(e^{it_j Y_n}-1)((\bar X_n)^2-V(X_n))]}{\bbE(e^{i t_j Y_n})}
\end{equation}
$$
=a_{n,j}^2-\frac{\bbE\big(e^{it_j X_n}(\bar X_n)^2\big)}{\bbE(e^{it_j X_n})}.
$$
In Section \ref{Sec2nd} we will use (\ref{a n,j}) to compute the coefficients of the polynomials from Theorem \ref{IntIndThm} in the case $r=2$, and \eqref{SecondTerm}
 is one of the main ingredients for the computation in the case $r=3$ 
 (which will not be explicitly discussed in this manuscript).
\end{remark}

The next step in the proof of Theorem \ref{IntIndThm} is the following.
\begin{lemma}\label{Step4}
For $t=t_j+h\in I_j$ we can decompose
\begin{equation}\label{S4}
\Phi_{N_0}(t)=\Phi_{N_0}(t_j+h)=\sum_{l=0}^L \frac{\Phi_{N_0}^{(l)}(t_j)}{l!} h^l+O\left((h \ln V_N)^{L+1}\right).
\end{equation}
\end{lemma}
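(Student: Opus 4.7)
The plan is to simply apply Taylor's theorem with remainder to $\Phi_{N_0}$ around $t_j$, and then estimate the $(L+1)$-st derivative of $\Phi_{N_0}$ by exploiting the fact that $N_0$ is controlled by $M_N(m)$. By Lagrange's form of the remainder,
$$\Phi_{N_0}(t_j+h)=\sum_{l=0}^L\frac{\Phi_{N_0}^{(l)}(t_j)}{l!}h^l+\frac{h^{L+1}}{(L+1)!}\,\Phi_{N_0}^{(L+1)}(\xi)$$
for some $\xi$ between $t_j$ and $t_j+h$. The entire task reduces to showing that $|\Phi_{N_0}^{(L+1)}(\xi)|=O((\ln V_N)^{L+1})$ uniformly in $j$.

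For this, I would first observe that by the construction preceding the lemma (reordering so that $q_n(m)$ is non-increasing, and defining $N_0$ as the largest index with $q_{N_0}(m)\geq\breps=\frac{1}{8K}$) one has
$$\breps\cdot N_0\leq\sum_{n=1}^{N_0}q_n(m)\leq M_N(m)\leq R(r,K)\ln V_N,$$
so $N_0\leq 8K\,R(r,K)\ln V_N=O(\ln V_N)$, with a constant depending only on $K$ and $r$.

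Next, since $|X_n|\leq K$, every summand characteristic function satisfies $|\phi_n^{(k)}(t)|=|\bbE((iX_n)^k e^{itX_n})|\leq K^k$ for all $k\geq 0$ and $t\in\bbR$. Applying the general Leibniz rule to the product $\Phi_{N_0}=\prod_{n=1}^{N_0}\phi_n$ gives
$$\Phi_{N_0}^{(L+1)}(t)=\sum_{k_1+\cdots+k_{N_0}=L+1}\binom{L+1}{k_1,\ldots,k_{N_0}}\prod_{n=1}^{N_0}\phi_n^{(k_n)}(t),$$
and bounding each $|\phi_n^{(k_n)}(t)|\leq K^{k_n}$, then using the multinomial identity
$$\sum_{k_1+\cdots+k_{N_0}=L+1}\binom{L+1}{k_1,\ldots,k_{N_0}}=N_0^{L+1},$$
yields the crude but sufficient estimate $|\Phi_{N_0}^{(L+1)}(t)|\leq (KN_0)^{L+1}$ uniformly on $I_j$.

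Plugging this into the Taylor remainder gives
$$\left|\frac{h^{L+1}}{(L+1)!}\Phi_{N_0}^{(L+1)}(\xi)\right|\leq\frac{(K N_0)^{L+1}}{(L+1)!}|h|^{L+1}=O\bigl((h\ln V_N)^{L+1}\bigr),$$
which is the claimed bound. There is no real obstacle: the only point to emphasize is that since $m\leq 2K$, there are only finitely many resonant points $t_j$ to consider, and the constants in the $O(\cdot)$ depend only on $K$, $L$ and $R(r,K)$, not on $j$, $h$, or $N$.
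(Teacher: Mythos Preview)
Your proof is correct and follows exactly the approach of the paper's one-line argument: Taylor expand and use the bound $|\Phi_{N_0}^{(k)}(t)|=O(N_0^k)=O((\ln V_N)^k)$; your write-up simply makes the details explicit. One small technical point: Lagrange's form of the remainder as you stated it (existence of a single intermediate point $\xi$) is only valid for real-valued functions, and $\Phi_{N_0}$ is complex-valued; the bound you actually use, $|R_L|\leq \frac{|h|^{L+1}}{(L+1)!}\sup_{\xi}|\Phi_{N_0}^{(L+1)}(\xi)|$, is of course still correct and follows from the integral form of the remainder or from applying Lagrange to the real and imaginary parts separately.
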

\begin{proof}
The lemma follows from the observation that 
 the derivatives of $\Phi_{N_0}$ satisfy
$|\Phi_{N_0}^{(k)}(t)|\leq O(N_0^k)\leq (C \ln V_N)^k$.
\end{proof}

\subsection{Completing the proof}\label{Fin}
 Recall \eqref{EqDual} and  consider a resonant interval $I_j$ which does not contain $0$ such that $M_N(m)\leq R\ln\sig_N$. Set $U_j=[-u_j,v_j]=I_j-t_j$. Let $N_0$ be as described below Lemma \ref{EpsLem}.
Denote
\begin{equation}
\label{SNN0}
S_{N_0,N}=S_N-S_{N_0},\,  S_0=0, 
\end{equation}
$$V_{N_0,N}=\text{Var}(S_N-S_{N_0})=V_N-V_{N_0}\quad\text{and}
\quad \sig_{N_0,N}=\sqrt{V_{N_0,N}}.$$
Then
\begin{equation}\label{Vars}
V_{N_0,N}=V_N+O(\ln V_N)=V_N(1+o(1)).
\end{equation}

Denote $h_{N_0,N}=h/\sig_{N_0,N}.$
By \eqref{FinStep.0},   if $|h_{N, N_0}|$  
is small enough then
\begin{equation}\label{FinStep}
\bbE(e^{ih_{N_0,N} S_{N_0,N}})=
\end{equation}
$$e^{ih_{N_0,N}\bbE(S_{N_0,N})}e^{-h^2/2}\left(1+\sum_{k=1}^r \frac{A_{k,N_0,N}(h)}{\sig_{N_0,N}^k}+\frac{h^{r+1}}{\sig_{N_0,N}^{r+1}} O(1)\right)$$
where $A_{k,N_0,N}$ are polynomials with bounded coefficients (the degree of $A_{k,N_0,N}$ depends only on  $k$).
Let us now evaluate
$\DS
\int_{I_j}e^{-itk}\bbE(e^{it S_N})dt.
$
By Lemma \ref{Step3},
\begin{equation}
\label{ResInt}
\int_{I_j}e^{-itk}\Phi_N(t)dt=
\end{equation}
$$ e^{-it_j k}\Phi_{N_0,N}(t_j)\int_{U_j}e^{-ihk}\Phi_{N_0}(t_j+h)\Phi_{N_0,N}(h)  \Psi_{N_0, N}(h)\; dh.
$$

Therefore, it is enough to expand the integral on the RHS of \eqref{ResInt}. 
Fix a large positive integer $L$ and plug \eqref{S4} into \eqref{ResInt}. 
Note that
for $N$ is large enough, $h_0$  small enough and $|h|\leq h_0$, 
Proposition \ref{PrHalf} and \eqref{Vars} show that there
exist positive constants $c_0, c$ such that
\begin{equation}\label{expo}
 |\Phi_{N_0,N}(h)|=|\bbE(e^{ih S_{N_0,N}})|\leq e^{-c_0(V_N-V_{N_0})h^2}\leq
 e^{-cV_N h^2}.
\end{equation} 
 Thus, the contribution coming from the term $O\left((h \ln V_N)^{L+1}\right)$ in the right hand side of (\ref{S4}) is at most of order 
\[
V_N^{R\del}(\ln V_n)^{L+1}\int_{-\infty}^\infty h^{L+1}e^{-c V_N h^2}dh
\] 
where $\del$ is the diameter of $I_j$. Changing variables $x=\sig_N h$, where $\sig_N=\sqrt{V_N}$ we get that  the latter term is  of order $(\ln V_n)^{L+1}\sig_{N}^{-(L+1-2R\del)}$ and so when $L$ is large enough we get that this term is $o(\sig_N^{-r-1})$ (alternatively, we can take $L=r$ and $\del$ to be sufficiently small). This means that it is enough to expand each integral of the form
\begin{equation}
\label{HLInt}
\int_{U_j}e^{-ih k}h^l\Phi_{N_0,N}(h)  \Psi_{N_0, N}(h) dh
\end{equation}
where $l=0,1,...,L$ (after changing variables the above integral is divided by $\sig_{N_0,N}^{l+1}$). Next, 
Lemma \ref{Step3} shows
that for any 
$d\in \bbZ$ 
we have
\begin{equation}
\label{EqOrderD}
 \Psi_{N_0, N}(h)
=1+\sum_{u=1}^{d}C_{w,N}h^u+h^{d+1}O(1+M_N(m)^{d+1})|V_N|^{O(|h|)},
\end{equation}
where  $C_{w,N}=C_{w,N, t_j}$ are $O(M_{N}^u(m))=O((\ln V_N)^{u})$. Note that, with $a_{n,j}$ and $b_{n,j}$ defined in Remark \ref{CoeffRem}, we have
\begin{equation}\label{C 1 N}
C_{1,N}=i\sum_{n=N_0+1}^{N}a_{n,j}
\end{equation}
and 
\[
C_{2,N}=\frac12\sum_{n=N_0+1}^{N}b_{n,j}-\frac 12\left(\sum_{n=N_0+1}^{N}a_{n,j}\right)^2.
\]

Take $d$ large enough and plug \eqref{EqOrderD} into \eqref{HLInt}.
Using (\ref{expo}), we get again that the contribution of the term 
$$h^{d+1}O(1+M_N(m)^{d+1})|V_N|^{O(|h|)}h^l\Phi_{N_0,N}(h)$$ to the above integral is $o(\sig_N^{-r})$. Thus, it is enough to expand each term of the form
\[
\int_{U_j}e^{-ih k}h^{q}\Phi_{N_0,N}(h)dh
\] 
where $0\leq q\leq L+d$.
Using (\ref{FinStep}) and making the  change of variables $h\to h/\sig_{N_0,N}$ it is enough to expand 
\begin{equation}\label{MainInt}
\int_{-\infty}^\infty e^{-ih (k-\bbE[S_{N_0,N}])/\sig_{N_0,N}}h^qe^{-h^2/2}\left(1+\sum_{w=1}^r \frac{A_{w,N_0,N}(h)}{\sig_{N_0,N}^w}+\frac{h^{r+1}}{\sig_{N_0,N}^{r+1}} O(1)\right)
 \frac{dh}{\sig_{N_0, N}}.
\end{equation}
This is achieved by using that $(it)^qe^{-t^2/2}$ is the Fourier transform of the $q$-th derivative of $\textbf{n}(t)=\frac{1}{\sqrt{2\pi}}e^{-t^2/2}$ and that for any real $a$,
\begin{equation}\label{Fourir}
\int_{-\infty}^\infty e^{-iat}\widehat{\textbf{n}^{(q)}}(t) dt=\textbf{n}^{(q)}(a)=\frac{1}{\sqrt{2\pi}}(-1)^{q}H_q(a)e^{-a^2/2}
\end{equation}
where $H_q(a)$ is the $q$-th Hermite polynomial. 

Note that in the above expansion we get polynomials in the variable 
$\DS k_{N_0,N}=\frac{k-\bbE[S_N-S_{N_0}]}{\sig_{N,N_0}}$, 
not in the variable $k_N=\frac{k-\bbE(S_N)}{\sig_N}$. 
Since $k_{N_0,N}=k_N\al_{N_0,N}+O(\ln \sig_N/\sig_N)$, where $\al_{N_0,N}=\sig_{N}/\sig_{N_0,N}=O(1)$, the binomial theorem shows that 
such polynomials can be rewritten as polynomials in the variable  
$k_N$ 
whose coefficients are uniformly bounded in $N$. 
We also remark that in the above expansions we get the 
exponential terms 
\[
e^{-\frac{(k-a_{N_0,N})^2}{2(V_N-V_{N_0})}}
\quad\text{where}\quad
a_{N_0,N}=\bbE[S_N-S_{N_0}]
\] 
and not $e^{-(k-a_N)^2/2V_N}$ (as claimed in Theorem \ref{IntIndThm}). 
In order to address this
 fix some $\ve<1/2$.   Note that for
 $|k-a_{N_0,N}|\geq  V_N^{\frac12+\ve}$ we have
 \[
e^{-\frac{(k-a_{N_0,N})^2}{2(V_N-V_{N_0})}}=o(e^{-cV_N^{2\ve}})
 \quad \text{and} \quad 
 e^{-\frac{(k-a_{N_0,N})^2}{2V_N}}=o(e^{-cV_N^{2\ve}})
 \text{ for some }c>0.
  \]
 Since both terms are $o(\sig_N^{-s})$ for any $s$, 
 it is enough to explain how to replace 
 $\DS 
 e^{-\frac{(k-a_{N_0,N})^2}{2(V_N-V_{N_0})}}
 $
 with $\DS e^{-\frac{(k-a_{N})^2}{2V_N}}$ when 
 $|k-a_{N_0,N}|\leq  V_N^{\frac12+\ve}$ 
 (in which  case $\DS |k-a_N|=O(V_N^{\frac12+\ve})$). 
 For such $k$'s we can write
\begin{equation}\label{ExpTrans1}
\exp\left[-\frac{(k-a_{N_0,N})^2}{2(V_N-V_{N_0})}\right]=
\end{equation}
$$ \exp\left[-\frac{(k-a_{N_0,N})^2}{2V_N}\right]\;
\exp\left[-\frac{(k-a_{N_0,N})^2 V_{N_0}}{2V_N(V_N-V_{N_0})}\right].
$$
Since $\DS \frac{(k-a_{N_0,N})^2 V_{N_0}}{2V_N(V_N-V_{N_0})}
=O\left(V_N^{-(1-3\ve)}\right)$,
for any $d_1$ we have
\begin{equation}\label{ExTrans1.1}
\exp\left[-\frac{(k-a_{N_0,N})^2 V_{N_0}}{2V_N(V_N-V_{N_0})}\right]=
\end{equation}
$$\sum_{j=0}^{d_1}\frac{V_{N_0}^j}{2^j(V_N-V_{N_0})^j j!}
\left(\frac{(k-a_{N_0,N})^2}{\sig_N^2}\right)^j+O(V_N^{-(d_1+1)(2-3\ve)}).$$
Note that (using the binomial formula) the first term on the above right hand side
 is a polynomial of the variable $(k-a_{N})/\sig_N$ whose coefficients are uniformly bounded in $N$.

Next we analyze the first factor in the RHS of \eqref{ExpTrans1}.
As before, it is enough to consider $k$'s such that 
 $|k-a_N|\leq  V_N^{\frac12+\ve}$  for a sufficiently small $\ve.$
We have
\begin{equation}\label{Centring}
\exp\left[-\frac{(k-a_{N,N_0})^2}{2V_N}\right]=
 \end{equation}
$$ \exp\left[-\frac{(k-a_N)^2}{2V_N}\right] \exp\left[-\frac{2(k-a_N)a_{N_0}+a_{N_0}^2}{2V_N}\right]. $$
 Note that
$\frac{(k-a_N)a_{N_0}+a_{N_0}^2}{2V_N}
=k_N\beta_{N_0,N}+\theta_{N_0,N}$, where 
$$\beta_{N_0,N}=\frac{a_{N_0}}{2\sig_N}=O\left(\frac{\ln \sig_N}{\sig_N}\right)
\;\text{ and }\;
\theta_{N_0,N}=\frac{a_{N_0}^2}{2V_N}=O\left(\frac{\ln^2\sig_N}{V_N}\right).$$


Approximating $e^{\frac{(k-a_N)a_{N_0}+a_{N_0}^2}{2V_N}}$ by a polynomial of a sufficiently large degree $d_2$ in the variable $\frac{(k-a_N)a_{N_0}+a_{N_0}^2}{2V_N}$
completes the proof of existence of polynomials $P_{a,b,N}$  claimed in the theorem
(the Taylor reminder in the last approximation is of order 
$\DS O\left(V_N^{-d_2(\frac12-\ve)}\right)$, 
so we can take $d_2=4(r+1)$ assuming that $\ve$ is small enough). 
 
Finally, let us show that the coefficients of the polynomials $P_{a,b,N}$ 
constructed above are uniformly bounded in $N$. In fact, we will show that for each nonzero resonant point $t_j=2\pi l/m$,
the coefficients of the polynomials coming from integration over $I_j$ are of order 
$$O\left((1+M_N^{q_0}(m))e^{-M_N(m)}\right),$$ where $q_0=q_0(r)$ depends only on $r$. 

Observe that the additional contribution  to the coefficients  of the polynomials coming from the transition between the variables $k_N$ and $k_{N_0,N}$ is uniformly bounded in $N$.
Hence we only need to show that the coefficients of the (original) polynomials in the variable $k_{N_0,N}$ are uniformly bounded in $N$. The possible largeness of these coefficient  can only come from the terms $C_{u,N,t_j}$, for $u=0,1,2,...,d$ which are of order $M_N^u(m)$, respectively. However, the corresponding terms are multiplied by terms of the form $\Phi_{N_0,N}(t_j)\Phi_{N_0}^{(\ell)}(t_j)$ for certain $\ell$'s which are uniformly bounded in $N$ (see also \eqref{Cj(k)}).
We conclude 
that there are constants $W_j\in\bbN$ and $a_j\in\bbN$ which depend only on $t_j$ and $r$ so that the coefficients of the resulting polynomials are composed of a sum of at most $W_j$ terms of order $(M_N(m))^{a_j}\Phi_{N_0,N}(t_j)\Phi_{N_0}^{(\ell)}(t_j)$, where $\ell\leq E(r)$ for some $E(r)\in\bbN$. 
Next, we have   
\begin{equation}
\label{DerComb}
\Phi_{N_0}^{(\ell)}(t_j)\Phi_{N_0, N}(t_j)=
\end{equation}
$$\sum_{n_1,\dots, n_k\leq N_0; \atop \ell_1+\dots+\ell_k=\ell}
\gamma_{\ell_1,\dots, \ell_k}
\left(\prod_{q=1}^k 
\phi_{n_q}^{(\ell_q)}(t_j) \right)
\left[\prod_{n\leq N, \; n\neq n_k} \phi_n(t_j)\right]$$
where $\gamma_{\ell_1,\dots, \ell_k}$ are bounded coefficients of combinatorial nature.
 Using (\ref{Roz0}) we see that 
 for each $n_1,\dots, n_k$ the product in the square brackets
is at most $C e^{-c_0 M_N(m)+O(1)}$ for some $C, c_0>0$. Hence
$$|\Phi_{N_0}^{(\ell)}(t_j)\Phi_{N_0, N}(t_j)|\leq 
\hat{C} N_0^{\ell} \; e^{-c_0 M_N(m)},\,\,\hat C>0.$$
Now, observe that the definition of $N_0$ gives
$M_N(m)\geq \ve_0 N_0$, $\ve_0>0$. Therefore
$\DS |\Phi_{N_0}^{(\ell)}(t_j)\Phi_{N_0, N}(t_j)|\leq 
C_0 M_N^\ell (m)e^{-c_0 M_N(m)}$, and so each one of the above coefficients is of order  $M_N^{\ell'}(m)e^{-c_0 M_N(m)}$ for some $\ell'$ which does not depend on $N$.
\qed

\begin{remark}
The transition between the variables $k_{N_0,N}$ and $k_{N}$ changes the monomials of the polynomials $P_{a,b,N}$,  $a\not=0$  coming from integration over $I_j$, for $t_j\not=0$ into  monomials of the form $\frac{c_N a_{N_0}^{j_1}\sig_{N_0}^{j_2}k_N^{j_3}}{\sig_N^{u}}$ for some bounded sequence $(c_N)$, $j_1,j_2,j_3\geq0$  and $u\in\bbN$. As we have explained, the coefficients of these monomials are uniformly bounded. Still, it seems  more natural to consider such monomials as part of the polynomial $P_{a,b+u,N}$. In this case we still get polynomials with bounded coefficients since $a_{N_0}$ and $\sig_{N_0}$ are both $O(N_0)$, $N_0=O(M_{N}(m))$ and $c_N$ contains a term of the form $\Phi_{N_0}^{(\ell)}(t_j)\Phi_{N_0, N}(t_j)$. 
\end{remark}

\begin{remark}
As can be seen from the proof, the resulting expansions might contain terms corresponding to 
$\sig_N^{-s}$ for $s>r$. 
Such terms can be disregarded. For $\frac{|k-a_N|}{\sig_N}\leq V_N^\ve$ this follows
because the coefficients of our exapansions are $O(1)$ and
for $\frac{|k-a_N|}{\sig_N}\geq V_N^\ve$ this follows from \eqref{expo}. 
In practice, some of the polynomials $P_{a,b, N}$ with $b\leq r$ might
 have coefficients which are $\DS o(\sig_N^{b-r})$ (e.g. when $b+u>r$ in the last remark) so they also can be
 disregarded. The question when the terms $P_{a,b, N}$ may be disregarded
  is in the heart of the proof of 
 Theorem \ref{r Char} given in the next section.
 \end{remark}

\subsection{A summary}
 The proofs of Proposition \ref{PrLLT-SLLT}, Theorem \ref{ThLLT} and Theorem \ref{r Char} will be based on careful analysis of the formulas of the polynomials from Theorem \ref{IntIndThm}. For this purpose, it will be helpful
to summarize the main conclusions
 from the proof of Theorem \ref{IntIndThm}.
Let $r\geq1$ and $t_j=2\pi l/m$ be a nonzero resonant point.  Then the arguments in the proof of Theorem \ref{IntIndThm} yield that the contribution to the expansion coming from $t_j$ is 
\begin{equation}\label{Cj(k)}
\textbf{C}_j(k):=
\end{equation}
$$ e^{-it_j k}\Phi_{N_0,N}(t_j)\sum_{s\leq r-1}\left(\sum_{u+l=s}\frac{\Phi_{N_0}^{(l)}(t_j)C_{u,N}}{l!}\right)\int_{U_j}e^{-ihk}h^{s}\Phi_{N_0,N}(h)dh $$
where $U_j=I_j-t_j$, $C_{u, N}$ are given by \eqref{EqOrderD} and 
$C_{0,N}=1$.  When $t_j=0$ then it is sufficient to consider only $s=0$, $N_0=0$
and the contribution is just the integral
\[
\int_{-\del}^\del e^{-ihk}\Phi_{N}(h)dh
\]
where $\del$ is small enough.
As in (\ref{MainInt}), changing variables we can replace the integral corresponding to $h^s$ with 
\begin{eqnarray}\label{s}
\sig_{N_0,N}^{-s-1}\int_{-\infty}^\infty e^{-ih (k-\bbE[S_{N_0,N}])/\sig_{N_0,N}}h^s e^{-h^2/2}\\\times\left(1+\sum_{w=1}^r \frac{A_{w,N_0,N}(h)}{\sig_{N_0,N}^w}+\frac{h^{r+1}}{\sig_{N_0,N}^{r+1}} O(1)\right)dh.\nonumber
\end{eqnarray}
After that was established, the proof was completed using \eqref{Fourir} and some estimates whose whose purpose was to make the transition between the variables $k_{N_0,N}$ and $k_N$.

\section{Uniqueness of trigonometric expansions.}\label{Sec5}
In several proofs we will need the following result.
\begin{lemma}\label{Lemma}
Let $r\geq1$ and $d\geq0$. Set $\cR_0=\cR\cup\{0\}$
where $\cR$ is the set of nonzero resonant points.
For any $t_j\in\cR_0$, let $A_{0,N}(t_j)$,...,$A_{d,N}(t_j)$ be sequences 
so that, uniformly in $k$ such that $\DS k_N=\frac{k-\bbE(S_N)}{\sig_N}=O(1)$ we have
\[
\sum_{t_j\in\cR_0}e^{-it_j k}\left(\sum_{m=0}^{d}k_N^m A_{m,N}(t_j)\right)
=o(\sig_N^{-r}).
\]
Then for all $m$ and $t_j$
\begin{equation}\label{A def}
A_{m,N}(t_j)=o(\sig_N^{-r}).
\end{equation}
In particular the polynomials from the definition of the (generalized) Edgeworth expansions are unique up to terms of order $o(\sig_N^{-r})$.
\end{lemma}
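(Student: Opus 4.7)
The plan is a two-stage inversion: first peel off the polynomial dependence on $k_N$ by evaluating at several integer values of $k$, then peel off the trigonometric dependence by varying the residue class of $k$ modulo a suitable integer $J$. The uniqueness assertion for (generalized) Edgeworth expansions then follows by applying the lemma to the difference of two such expansions.

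First I would choose $J$ to be a common period for all the characters $k\mapsto e^{-it_j k}$ with $t_j\in\cR_0$. Since every nonzero resonant point has the form $2\pi l/m$ with $m\leq 2K$, the choice $J=\mathrm{lcm}(1,2,\ldots,2K)$ ensures that $Jt_j/(2\pi)\in\bbZ$, so $e^{-it_j k}$ depends only on $k\bmod J$. For each fixed residue $a\in\{0,1,\ldots,J-1\}$, restricting the hypothesis to integers $k\equiv a\pmod{J}$ collapses the sum into
\[
\sum_{m=0}^d k_N^m\, B_{m,N}(a)=o(\sig_N^{-r}),\qquad B_{m,N}(a):=\sum_{t_j\in\cR_0} e^{-it_j a}\, A_{m,N}(t_j),
\]
uniformly in such $k$ with $k_N=O(1)$.

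To extract each $B_{m,N}(a)$ I would fix $d+1$ distinct real numbers $x_0,\ldots,x_d$ in a bounded interval. For each $i$ and each sufficiently large $N$, I would choose an integer $k_i=k_i(N)$ with $k_i\equiv a\pmod{J}$ and $|k_{i,N}-x_i|\leq J/\sig_N$; this is possible because consecutive integers in a fixed residue class mod $J$ are $J$ apart and $\sig_N\to\infty$. Then $k_{i,N}\to x_i$, so the Vandermonde matrix $(k_{i,N}^m)_{0\leq i,m\leq d}$ is invertible for all $N$ large, with inverse bounded uniformly in $N$. Inverting the $(d+1)$-equation system yields $B_{m,N}(a)=o(\sig_N^{-r})$ for every $m$ and $a$. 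Then, for each fixed $m$, varying $a$ over $\{0,\ldots,J-1\}$ gives the linear system $\sum_{t_j\in\cR_0} e^{-it_j a} A_{m,N}(t_j) = o(\sig_N^{-r})$. Because $Jt_j/(2\pi)\in\bbZ$, distinct elements of $\cR_0$ correspond to distinct characters of $\bbZ/J\bbZ$, so the coefficient matrix is a full-column-rank submatrix of the $J\times J$ DFT matrix and admits a left inverse independent of $N$. Applying this left inverse gives $A_{m,N}(t_j)=o(\sig_N^{-r})$, as claimed.

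The main subtlety I anticipate is the discretization in the middle step: since $k$ must be an integer, the values $k_{i,N}$ cannot be prescribed exactly, only approximated to within $O(1/\sig_N)$. However, this approximation error combined with the continuity of matrix inversion near an invertible limit keeps the Vandermonde inversion uniform in $N$, so the $o(\sig_N^{-r})$ bounds propagate through both inversions unchanged.
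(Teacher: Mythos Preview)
Your argument is correct, but it is organized differently from the paper's. The paper proceeds by induction on $d$: the base case $d=0$ is exactly your DFT inversion over $\bbZ/J\bbZ$, while the inductive step reduces the degree by one via the shift $k\mapsto k+[\sig_N]J$, which leaves every $e^{-it_jk}$ unchanged but translates $k_N$ by $L_N\approx J$; subtracting the original relation from the shifted one kills the top-degree term and feeds the result back into the induction hypothesis. You instead decouple the two dependencies in one pass: fixing a residue class modulo $J$ collapses the trigonometric sum to a constant, leaving a pure polynomial in $k_N$, which you invert at $d+1$ approximate nodes via a single Vandermonde; then the DFT over residues recovers the individual $A_{m,N}(t_j)$. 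Your route is more direct and avoids induction, while the paper's shift-and-subtract is essentially a finite-difference implementation of the same polynomial inversion, carried out one degree at a time. Both hinge on the same observation, namely that the characters $e^{-it_jk}$ share a common integer period $J$, so the trigonometric and polynomial variables can be separated.
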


\begin{proof}
The proof is by induction on $d$. Let us first set  $d=0$. Then, for any $k\in\bbN$ we have
\begin{equation}\label{d=0}
\sum_{t_j\in\cR_0}e^{-it_j k}A_{0,N}(t_j)=o(\sig_N^{-r}).
\end{equation}
Let $T$ be the number of nonzero resonant points, and let us relabel them as $\{x_1,...,x_T\}$. Consider the vector $$\fA_N=(A_{0,N}(0), A_{0,N}(x_1),...,A_{0,N}(x_T)).$$ Let
$\cV$ be the transpose of the Vandermonde matrix of the distinct numbers $\al_j=e^{-ix_j}, j=0,1,2,...,T$ where $x_0:=0$. Then $\cV$ is invertible and by considering $k=0,1,2,...,T$ in (\ref{d=0}) we see that (\ref{d=0}) holds true if and only if 
\[
\fA_N=\cV^{-1}o(\sig_N^{-r})=o(\sig_N^{-r}).
\]
Alternatively, let $Q$ be the least common multiple of the denominators of $t_j\in\cR$.
Let $a_N(p)=A_{0,N}(2\pi p/Q)$ if $2\pi p/Q$ is a resonant point and $0$ otherwise. Then for $m=0,1,...,Q-1$ we have
\[
\hat a_N(m):=\sum_{p=0}^{Q-1}a_N(p)e^{-2\pi pm/Q}=o(\sig_N^{-r}).
\]
Therefore, by the inversion formula of the discrete Fourier transform,
\[
a_N(p)=Q^{-1}\sum_{m=0}^{Q-1} \hat a_N(m) e^{2\pi i m p/Q}=o(\sig_N^{-r}).
\]

Assume now that 
the theorem is true for some $d\geq 0$ and any sequences functions $A_{0,N}(t_j),...,A_{d,N}(t_j)$. Let $A_{0,N}(t_j),...,A_{d+1,N}(t_j)$ be sequences so that  uniformly in $k$ 
such that $\DS k_N:=\frac{k-\bbE(S_N)}{\sig_N}=O(1)$ we have
\begin{equation}\label{I}
\sum_{t_j\in\cR_0}e^{-it_j k}\left(\sum_{m=0}^{d+1}k_N^m A_{m,N}(t_j)\right)=o(\sig_N^{-r}).
\end{equation}
Let us replace $k$ with $k'=k+[\sig_N]Q$, where $Q$ is the least common multiply of all the denominators of the
nonzero $t_j$'s. Then  $e^{-it_jk}=e^{-it_j k'}$. Thus,
\[
\sum_{t_j\in\cR_0}e^{-it_j k}\left(\sum_{m=0}^{d+1}(k_N'^m-k_N^{m})A_{m,N}(t_j)\right)=o(\sig_N^{-r}).
\]
Set $L_N=[\sig_N]Q/\sig_N\thickapprox Q$.
Then the  LHS above equals
\[
L_N\sum_{t_j\in\cR_0}e^{-it_j k}\left(\sum_{s=0}^{d}k_N^s\cA_{s,N}(t_j)\right)
\]
where
\[
\cA_{s,N}(t_j)=\sum_{m=s+1}^{d+1}A_{m,N}(t_j)L_N^{m-s-1}.
\]
By the induction hypothesis we get that 
\[
\cA_{s,N}(t_j)=o(\sig_N^{-r})
\]
for any $s=0,1,...,d$. In particular 
\[
\cA_{d,N}(t_j)=A_{d+1,N}(t_j)=o(\sig_N^{-r}).
\] 
Substituting this into \eqref{I} we can disregard 
the last term $A_{d+1,N}(t_j)$. Using the induction hypothesis with $A_{0,N}(t_j),A_{1,N}(t_j),...,A_{d,N}(t_j)$ we 
 obtain \eqref{A def}.
\end{proof}

\section{First order expansions}\label{FirstOrder}
In this section we will consider the case $r=1$. 
By \eqref{Cj(k)} and \eqref{s}, we see that the contribution coming from the integral over $I_j$ is
\[
\sig_{N_0,N}^{-1}e^{-it_j k}\Phi_{N}(t_j)\sqrt {2\pi} e^{-k_{N_0,N}^2/2}+o(\sig_N^{-1})
\]
where  $k_{N_0,N}=(k-\bbE(S_{N_0,N}))/\sig_{N_0,N}$. Now, using the arguments at the end of the proof of Theorem \ref{IntIndThm} when $r=1$ we can just replace $e^{-k_{N_0,N}^2/2}$ with $e^{-(k-\bbE(S_N))^2/2V_N}$ (since it is enough to consider the case when $k_{N_0,N}$ and $k_{0,N}$ are of order $V_N^\ve$). Therefore, taking into account that $\sig_{N_0,N}^{-1}-\sig_{N}^{-1}=O(\sig_N^{-2}N_0)$
we get that 
\begin{equation}\label{r=1}
\sqrt 2\pi\bbP(S_N=k)=
\end{equation}
$$\left(1+\sum_{t_j\in \cR}e^{-it_j k}\Phi_{N}(t_j)\right)\sig_{N}^{-1}e^{-(k-\bbE[S_N])^2/2V_N}+o(\sigma_N^{-1}). $$

Here $\cR$ is the set of all nonzero resonant points $t_j=2\pi l_j/m_j$.
 Indeed \sout{for} the contribution of the resonant points satisfying 
$M_N(m_j)\leq  R(r,K)\ln V_N$ is analyzed in \S \ref{Fin}.
 The contribution 
of the other nonzero resonant points $t$ is $o(\sig_N^{-1})$ due to \eqref{Roz0} in Section \ref{ScEdgeLogProkh}. In particular, \eqref{Roz0}
implies that $\Phi_N(t)=o(\sig_N^{-1})$ so adding the points with $M_N(m_j)\geq  R(r,K)\ln V_N$
only changes the sum in the RHS of \eqref{r=1} by 
$o(\sigma_N^{-1}). $

\begin{corollary}\label{FirstOrCor}
The local limit theorem holds if and only if $\DS \max_{t\in R}|\Phi_N(t)|=o(1)$.
\end{corollary}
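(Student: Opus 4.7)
The plan is to read off both implications directly from the generalized first-order expansion \eqref{r=1}, which expresses $\sqrt{2\pi}\,\bbP(S_N=k)$ as the classical Gaussian term multiplied by the factor $1+\sum_{t_j\in\cR}e^{-it_j k}\Phi_N(t_j)$, up to an $o(\sig_N^{-1})$ remainder. The key structural input to exploit is that $\cR$ is a \emph{finite} set (its elements are rationals $2\pi l/m$ with $0<m\leq 2K$, $0\leq l<m$) whose cardinality depends only on $K$.

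For the easy direction, suppose $\max_{t\in\cR}|\Phi_N(t)|=o(1)$. Since $\cR$ is finite and $e^{-(k-\bbE(S_N))^2/2V_N}\leq 1$, the correction factor in \eqref{r=1} satisfies $\sum_{t_j\in\cR}e^{-it_j k}\Phi_N(t_j)=o(1)$ uniformly in $k$, so the whole right-hand side reduces to the classical Gaussian plus $o(\sig_N^{-1})$, which is the LLT.

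For the converse, assume the LLT and subtract it from \eqref{r=1}; uniformly in $k$,
\[
\Bigl(\sum_{t_j\in\cR}e^{-it_j k}\Phi_N(t_j)\Bigr)\sig_N^{-1}e^{-(k-\bbE(S_N))^2/2V_N}=o(\sig_N^{-1}).
\]
I would then restrict to integers $k$ with $|k-\bbE(S_N)|\leq \sig_N$ (abundant once $\sig_N\to\infty$); for such $k$ the Gaussian factor is $\geq e^{-1/2}$, so after multiplying through by $\sig_N$,
\[
\sum_{t_j\in\cR}e^{-it_j k}\Phi_N(t_j)=o(1)
\]
uniformly in such $k$. Enumerate $\cR=\{x_1,\dots,x_T\}$; the complex numbers $\alpha_j:=e^{-ix_j}$ are all distinct, so evaluating the previous display at $T$ consecutive integers $k$ inside the admissible window produces a linear system whose coefficient matrix is a Vandermonde in $\alpha_1,\dots,\alpha_T$, hence invertible. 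Inverting it (exactly as in the $d=0$ case of the proof of Lemma \ref{Lemma}) gives $\Phi_N(x_j)=o(1)$ for every $j$, which is the claim.

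The only genuine subtlety is bookkeeping: confirming that $T$ consecutive integers $k$ with $k_N=O(1)$ exist for all large $N$, which is automatic since $T$ depends only on $K$ while $\sig_N\to\infty$, and observing that the fixed Vandermonde inverse maps a vector of $o(1)$'s to a vector of $o(1)$'s. No analytic work beyond \eqref{r=1} is required.
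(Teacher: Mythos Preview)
Your proof is correct and follows essentially the same route as the paper. The paper's proof simply observes that, by \eqref{r=1}, the LLT is equivalent to the trigonometric sum being $o(1)$ and then invokes Lemma~\ref{Lemma}; you spell out both implications explicitly and, for the converse, reproduce inline the $d=0$ Vandermonde argument from the proof of Lemma~\ref{Lemma} (adapted slightly to use $T$ consecutive integers inside the window $k_N=O(1)$ rather than $k=0,\dots,T$, which is harmless since the resulting matrix differs from the Vandermonde only by an invertible diagonal factor).
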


\begin{proof}
It follows from (\ref{r=1}) that the LLT holds true if and only if for any $k$ we have
$$
\sum_{t_j\in \cR}e^{-it_j k}\Phi_{N}(t_j)=o(1).
$$
Now, the corollary follows from Lemma \ref{Lemma}. 
\end{proof}

Before proving Theorem \ref{ThLLT} we recall a standard fact which will also be useful 
in the proofs of Theorems \ref{Thm SLLT VS Ege} and \ref{Thm Stable Cond}.

\begin{lemma}
\label{LmUnifFourier}
Let $\{\mu_N\}$ be a sequence of measures probability measures on $\bbZ/m\bbZ$ and $\{\gamma_N\}$ be a 
positive sequence.
Then $\mu_N(a)=\frac{1}{m}+O(\gamma_N)$ for all $a\in \bbZ/m\bbZ$
if and only iff  $\hat\mu_N(b)=O(\gamma_N)$ for all $b\in \left(\bbZ/m\bbZ\right)\setminus \{0\}$
where $\hat\mu$ is the Fourier transform of $\mu.$ 
\end{lemma}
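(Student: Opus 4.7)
The plan is to prove both directions using the standard discrete Fourier inversion on the finite cyclic group $\bbZ/m\bbZ$, where both the measure and its Fourier transform live in a fixed finite-dimensional space, so the two norms are equivalent up to a multiplicative constant depending only on $m$.

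For the forward direction, I would write, for any $b \in (\bbZ/m\bbZ)\setminus\{0\}$,
\[
\hat\mu_N(b) = \sum_{a \in \bbZ/m\bbZ} \mu_N(a)\, e^{-2\pi i ab/m} = \sum_{a}\left(\tfrac{1}{m} + O(\gamma_N)\right) e^{-2\pi i ab/m},
\]
and then use that $\sum_{a \in \bbZ/m\bbZ} e^{-2\pi i ab/m} = 0$ for $b \neq 0$ to conclude $\hat\mu_N(b) = O(m\gamma_N) = O(\gamma_N)$.

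For the reverse direction, I would apply the Fourier inversion formula
\[
\mu_N(a) = \frac{1}{m} \sum_{b \in \bbZ/m\bbZ} \hat\mu_N(b)\, e^{2\pi i ab/m},
\]
noting that $\hat\mu_N(0) = \sum_a \mu_N(a) = 1$ because $\mu_N$ is a probability measure. Isolating this term gives
\[
\mu_N(a) = \frac{1}{m} + \frac{1}{m}\sum_{b\neq 0} \hat\mu_N(b)\, e^{2\pi i ab/m} = \frac{1}{m} + O(\gamma_N),
\]
where the last estimate uses the triangle inequality together with the fact that there are only $m-1$ nonzero frequencies, each of which is assumed to be $O(\gamma_N)$.

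There is essentially no obstacle here: the lemma reduces to the equivalence of norms on the $m$-dimensional space of functions on $\bbZ/m\bbZ$, together with the elementary identity $\hat\mu_N(0) = 1$. The only point worth mentioning explicitly is that the implicit constants in both $O(\gamma_N)$ estimates depend on $m$ but not on $N$, which is consistent with the way the lemma is used in the proofs of Theorems \ref{Thm SLLT VS Ege} and \ref{Thm Stable Cond}, where $m$ is bounded by $2K$.
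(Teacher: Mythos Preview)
Your proof is correct and essentially identical to the paper's own argument: both directions are handled by the discrete Fourier transform/inversion on $\bbZ/m\bbZ$, using that $\hat\mu_N(0)=1$ and that $\sum_a e^{\pm 2\pi i ab/m}=0$ for $b\neq 0$. The only cosmetic difference is the sign convention in the exponent, and your remark about the dependence of the implicit constants on $m$ (but not $N$) is a useful addition.
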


\begin{proof}
If $\mu_N(a)=\frac{1}{m}+O(\gamma_n)$ then
$$ \hat\mu_N(b)=\sum_{a=0}^{m-1} \mu_N(a) e^{2\pi i ab/m}=
\sum_{a=0}^{m-1} \frac{1}{m} e^{2\pi iab/m}+O(\gamma_N)=O(\gamma_N).$$
Next $\hat\mu_N(0)=1$ since $\mu_N$ are probabilities. Hence
if $\hat\mu_N(b)=O(\gamma_N)$ for all 
$b\in \left(\bbZ/m\bbZ\right)\setminus \{0\}$ then
$$ \mu_N(a)=\frac{1}{m} \sum_{b=0}^{m-1} \hat\mu_N(b) e^{-2\pi i ba/m}=
\frac{1}{m} \left[1+\sum_{b=1}^{m-1} \hat\mu_N(b) e^{-2\pi i  ba/m}\right]=\frac{1}{m}+O(\gamma_N)$$
as claimed.
\end{proof}

\begin{proof}[Proof of Theorem \ref{ThLLT}]
The equivalence of conditions (b) and (c) comes from 
the fact that for non-resonant points
the characteristic  function decays faster than any power of $\sigma_N$ (see \eqref{NonResDec}).

The equivalence of (a) and (c) is due to Corollary \ref{FirstOrCor}. 
Finally,  the equivalence between (c) and (d) comes from Lemma \ref{LmUnifFourier}.
\end{proof}

\begin{remark}
Theorem \ref{ThLLT} can also be deduced
from 
\cite[Corollary 1.4]{Do}. Indeed the corollary says that either the LLT holds or there is
an integer
$h\in (0, 2K)$ and a bounded sequence $\{a_N\}$ such that 
the limit 
$$ \bp(j)=\lim_{N\to\infty} \bbP(S_N-a_N=j \text{ mod } h) $$
exists and moreover if $k-a_n\equiv j \text{ mod }h$ then
$$ \sigma_N \bbP(S_N=k)=\bp(j) h \fg\left(\frac{k-\bbE(S_N)}{\sigma_N}\right)+o\left(\sigma_N^{-1}\right).$$
Thus in the second case the LLT holds iff $\bp(j)=\frac{1}{h}$ for all $j$ which is equivalent 
to $S_N$ being asymptotically uniformly distributed mod $h$ and also to 
the Fourier transform of $\bp(j)$ regarded as the measure on $\bbZ/(h\bbZ)$ being the 
$\delta$ measure at 0. Thus the conditions (a), (c) and (d) of the theorem are equivalent.
Also by the results of \cite[Section 2]{Do} 
(see also [\S 3.3.2]\cite{DS})
if $\bbE\left(e^{i\xi S_N}\right)$ does not converge to 
0 for some non zero $\xi$ then $\DS \left(\frac{2\pi}{\xi}  \right) \bbZ\bigcap 2\pi \bbZ$ is
a lattice in $\bbR$ which implies that $\xi$ is resonant, so condition (b) of the theorem is
also equivalent to the other conditions.
\end{remark}

\begin{proof}[Proof of Proposition \ref{PrLLT-SLLT}.]
Let $S_N$ satisfy LLT. Fix $m\in \mathbb{N}$ and suppose that 
$\DS \sum_n q_n(m)<\infty.$ Let $s_n$ be the most likely residue of $X_n$ mod $m$.
Then for $t=\frac{2\pi l}{m}$ we have
$$\phi_n(t)=e^{i t s_n}-\sum_{j\not\equiv s_n\; \text{mod}\; m} 
\bbP(X_n\equiv j\; \text{mod m})\left(e^{its_n}-e^{itj}\right),$$
so that $1\geq |\phi_n(t)|\geq 1-2 m q_n(m).$ It follows that for each $\ve>0$ there is 
$N(\ve)$ such that $\DS \left|\prod_{n=N(\ve)+1}^\infty \phi_n(t)\right|>1-\ve. $
Applying this for $\ve=\frac{1}{2}$ we have
\begin{equation}
\label{PhiNHalf}
\frac{1}{2}\leq\liminf_{N\to\infty} \left|\Phi_{N(1/2), N}(t)\right|\neq 0. 
\end{equation}
On the other hand the LLT implies that 
\begin{equation}
\label{PhiLim}
\lim_{N\to\infty} \Phi_N(t)=0.
\end{equation}
Since $\Phi_N=\Phi_{N(1/2)} \Phi_{N(1/2), N}$,
\eqref{PhiNHalf} and \eqref{PhiLim} imply that $\Phi_{N(1/2)}(t)=0.$
Since $\DS \Phi_{N(1/2)}\left(\frac{2\pi l}{m}\right)=\prod_{n=1}^{N(1/2)} \phi_n\left(\frac{2\pi l}{m}\right)$ 
we conclude that there exists $n_l\leq N(1/2)$ such that 
$\phi_{n_l}(\frac{2\pi l}{m})=0.$ 
Hence
$Y=X_{n_1}+X_{n_2}+\dots X_{n_{m-1}}$ satisfies
$\bbE\left(e^{2\pi i (k/m)Y}\right)=0$ for $k=1,\dots m-1.$
By Lemma \ref{LmUnifFourier} both $Y$ and $S_N$ for $N\geq N(1/2)$ are uniformly distributed.
This proves the proposition.
\end{proof}

\section{Characterizations of Edgeworth expansions of all orders.}
\label{ScCharacterization}

\subsection{Derivatives of the non-perturbative factor.}

Next we prove the following result.
\begin{proposition}\label{Thm}
Fix  $r\geq1,$ and assume that $M_N\leq R(r, K)\ln\sig_N$  (possibly along a subsequence).
Then
Edgeworth expansions of order $r$ hold true (i.e. (\ref{EdgeDef}) holds for such $N$'s) iff
for each $t_j\in\cR$ and $0\leq\ell<r$ (along the underlying subsequence) we have
\begin{equation}\label{Cond}
\sig_N^{r-\ell-1}\Phi_{N_0,N}(t_j)\Phi_{N_0}^{(\ell)}(t_j)=o(1).
\end{equation}
\end{proposition}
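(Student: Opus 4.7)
By Theorem \ref{IntIndThm}, the difference between the generalized expansion of $2\pi\bbP(S_N=k)$ and the classical Edgeworth expansion of order $r$ is exactly the oscillatory sum $\sum_{t_j\in\cR}\textbf{C}_j(k)$ from \eqref{Cj(k)}. Hence the classical expansion of order $r$ holds iff this sum is $o(\sigma_N^{-r})$, and Lemma \ref{Lemma} lets me decouple the distinct frequencies $t_j$. So I only need to characterize, for each fixed $t_j\in\cR$, when $\textbf{C}_j(k)=o(\sigma_N^{-r})$ uniformly on $\{k:k_N=O(1)\}$, and show this is equivalent to \eqref{Cond}.

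\textbf{Extracting coefficients from $\textbf{C}_j(k)$.} Substituting \eqref{s} into \eqref{Cj(k)} and evaluating the resulting Gaussian integrals via \eqref{Fourir} yields a representation
$$\textbf{C}_j(k)=e^{-it_jk}\fg(k_N)\sum_{b=1}^{r}\frac{Q_{j,b,N}(k_N)}{\sigma_N^{b}}+o(\sigma_N^{-r}),$$
where each $Q_{j,b,N}$ is a polynomial of degree bounded in terms of $r$, and the coefficient of $k_N^m$ in $Q_{j,b,N}$ is a universal bounded linear combination of
$$\Phi_{N_0,N}(t_j)\sum_{u+l=s}\frac{\Phi_{N_0}^{(l)}(t_j)\,C_{u,N}}{l!},\qquad 0\leq s\leq b-1$$
(the $s<b-1$ contributions coming from the $A_{w,N_0,N}/\sigma_{N_0,N}^{w}$ corrections in \eqref{s}). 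I then invoke Lemma \ref{Lemma} monomial by monomial in $k_N$, and peel off the $\sigma_N^{-b}$ orders by induction on $b$; this is legitimate because each of the quantities above is at most polylogarithmic in $\sigma_N$ thanks to $|C_{u,N}|=O((\ln V_N)^{u})$. The conclusion is that $\textbf{C}_j(k)=o(\sigma_N^{-r})$ is equivalent to
\begin{equation}\label{PlanKey}
\Phi_{N_0,N}(t_j)\sum_{u+l=s}\frac{\Phi_{N_0}^{(l)}(t_j)\,C_{u,N}}{l!}=o(\sigma_N^{s+1-r})\quad\text{for all }\,0\leq s\leq r-1.
\end{equation}

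\textbf{From \eqref{PlanKey} to \eqref{Cond}.} It remains to show \eqref{PlanKey}$\Leftrightarrow$\eqref{Cond}. For \eqref{Cond}$\Rightarrow$\eqref{PlanKey}: in the $s$-th condition, the $l=s$ summand equals $\Phi_{N_0,N}(t_j)\Phi_{N_0}^{(s)}(t_j)/s!=o(\sigma_N^{s+1-r})$ by \eqref{Cond}, while each $l<s$ summand carries $|C_{s-l,N}|=O((\ln\sigma_N)^{s-l})$ against a $\sigma_N^{s-l}$ gain (from comparing $l+1-r$ with $s+1-r$), so is also $o(\sigma_N^{s+1-r})$. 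For \eqref{PlanKey}$\Rightarrow$\eqref{Cond} I argue by induction on $\ell$: the base $\ell=0$ is \eqref{PlanKey} at $s=0$ (since $C_{0,N}=1$); the inductive step isolates the $l=\ell$ summand in \eqref{PlanKey} at $s=\ell$ and absorbs every other summand via the inductive hypothesis for $l<\ell$ together with the polylogarithmic bound on $C_{\ell-l,N}$. The main difficulty is precisely this last induction: because the $C_{u,N}$ are only polylogarithmically (not $o(1)$) small, each step has to carefully exploit the clean $\sigma_N^{s-l}$ gap, and this is where the standing hypothesis $M_N\leq R(r,K)\ln\sigma_N$ is consumed---it is exactly what keeps $|C_{u,N}|$ from growing faster than polylog in $\sigma_N$.
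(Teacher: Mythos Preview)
Your overall strategy is sound: reducing to the intermediate condition \eqref{PlanKey} and then proving \eqref{PlanKey}$\Leftrightarrow$\eqref{Cond} is a legitimate route, and your argument for the latter equivalence is correct. However, the step where you ``peel off the $\sigma_N^{-b}$ orders by induction on $b$'' to deduce \eqref{PlanKey} from $\textbf{C}_j(k)=o(\sigma_N^{-r})$ has a genuine gap.

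Concretely, take $r=3$ and write $D_s$ for the quantities in \eqref{PlanKey}. After Lemma \ref{Lemma}, the constant-monomial equation reads
\[
A_{0,N}=\frac{D_0}{\sigma_N}+\frac{(\text{bdd})D_0+(\text{bdd})D_1+(\text{bdd})D_2}{\sigma_N^{3}}=o(\sigma_N^{-3}).
\]
Using only the a priori polylog bound $D_2=O((\ln\sigma_N)^2)$ this gives $D_0=O((\ln\sigma_N)^2/\sigma_N^{2})$, which is \emph{not} $o(\sigma_N^{-2})$. No single-pass induction on $b$ repairs this: one must first use the $x^1$- and $x^2$-coefficients to obtain $D_1=o(\sigma_N^{-1})$ and $D_2=o(1)$, and then \emph{return} to $A_{0,N}$ to upgrade the bound on $D_0$. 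This iterative bootstrap is precisely what the paper does, packaged differently: it inducts on $r$, invokes the induction hypothesis \eqref{Ind} through Lemma~\ref{LemInd} to kill most terms of $\textbf{C}_j(k)$ up front (leaving only the four explicit pieces $\cH_{N,q,1},\ldots,\cH_{N,q,4}$), reads off the top two coefficients $A_{r-1,N}$ and $A_{r-2,N}$, uses the latter to get the preliminary estimate $\Phi_{N_0,N}\Phi_{N_0}^{(r-2)}=o(\sigma_N^{-1}\ln\sigma_N)$, feeds that into the former to obtain $\Phi_{N_0,N}\Phi_{N_0}^{(r-1)}=o(1)$, and then descends. Your linear-algebraic picture can be made to work along the same lines (the matrix with entries $[x^m]H_s$ is unit upper-triangular, so one can iterate the inversion finitely many times), but the polylog bound alone is not what makes it go---the bootstrap is essential and is missing from your write-up.

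A second, smaller issue: your claim that the $k_N^m$-coefficients of $Q_{j,b,N}$ are \emph{universal bounded} linear combinations of the $D_s$'s is not quite right. The integrals \eqref{s} are naturally in the variable $k_{N_0,N}$, and the transition to $k_N$ (together with $\fg(k_{N_0,N})\to\fg(k_N)$) introduces factors of $a_{N_0}=\bbE(S_{N_0})=O(\ln\sigma_N)$, which are polylogarithmic but not bounded; these are exactly the source of the terms $\cH_{N,q,3}$ and $\cH_{N,q,4}$ in Lemma~\ref{LemInd}. You can sidestep this by working in $k_{N_0,N}$ throughout (Lemma~\ref{Lemma} adapts trivially), but the point must be addressed.
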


\begin{proof}
First, in view of \eqref{Cj(k)}  and \eqref{expo},  it is clear
 that the condition (\ref{Cond}) is sufficient 
for expansions of order $r$.

Let us now prove that the condition (\ref{Cond}) is necessary for the expansion of order $r.$ 

We will use induction on $r$. For $r=1$ (see \eqref{r=1}) our expansions read
\[
\bbP(S_N=k)=\sig_N^{-1}e^{-k_N^2/2}
\left[1+\sum_{t_j\in \cR}e^{-it_j k}\Phi_N(t_j)\right]+o(\sig_N^{-1}).
\]
Therefore if 
\[
\bbP(S_N=k)=\sig_N^{-1}e^{-k_N^2/2}P_N(k_N)+o(\sig_N^{-1})
\]
 for some  polynomial $P_N$ 
 then Lemma \ref{Lemma} tells us that,
 in particular $\Phi_{N}(t_j)=o(1)$ for each $t_j\in\cR$.
 
Let us assume now that the necessity part in Proposition \ref{Thm} holds 
for $r'=r-1$ and  prove that it holds for $r$. We will use the following lemma.

\begin{lemma}\label{LemInd}
Assume that for some $t_j\in\cR$,
\begin{equation}\label{Ind}
\sig_{N}^{r-2-l}\Phi_{N_0,N}(t_j)\Phi_{N_0}^{(l)}(t_j)=o(1),\, l=0,1,...,r-2.
\end{equation}
  Then, up to an $o(\sig_N^{-r})$ error term, the contribution of  $t_j$ to the generalized Edgeworh expansions of order $r$ is
\begin{equation}\label{Val0}
e^{-it_j k}e^{-k_{N}^2/2}\left(\frac{\Phi_{N_0}(t_j)}{\sig_N}+\sum_{q=2}^{r}\frac{\mathscr H_{N,q}(k_{N})}{\sig_N^{q}}\right)
\end{equation}
with
\begin{equation}
\label{DefCH}
\mathscr H_{N,q}(x)=\mathscr H_{N,q}(x;t_j)=
\end{equation}
$$ \cH_{N,q,1}(x)+\cH_{N,q,2}(x)+\cH_{N,q,3}(x)+\cH_{N,q,4}(x)
$$
where
$$ \cH_{N, q,1}(x)=
\frac{(i)^{q-1}H_{q-1}(x)\Phi_{N_0,N}(t_j)\Phi_{N_0}^{(q-1)}(t_j)}{(q-1)!},$$
$$ \cH_{ N,q, 2}(x)=
\frac{(i)^{q-1}H_{q-1}(x)\Phi_{N_0,N}(t_j)\Phi_{N_0}^{(q-2)}(t_j)C_{1,N,t_j}}{(q-2)!},
$$
$$\cH_{N,q,3}(x)= \frac{a_{N_0}(i)^{q-2}H'_{q-2}(x)\Phi_{N_0,N}(t_j)\Phi_{N_0}^{(q-2)}(t_j)}{(q-2)!},$$
$$
\cH_{N,q,4}(x)=-\frac{xa_{N_0}(i)^{q-2}H_{q-2}(x)\Phi_{N_0,N}(t_j)\Phi_{N_0}^{(q-2)}(t_j)}{(q-2)!},
$$
 and $H_q$ are Hermite polynomials. 
 
 Here $C_{1,N,t_j}$ is given by \eqref{C 1 N} when $M_N(m)\leq R(K, r)\ln\sig_N$, and $C_{1,N,t_j}=0$
 when $M_N(m)>R(K,r)\ln\sig_N.$   (Note that in either case
 $C_{1,N,t_j}=O(M_N(m))=O(\ln\sig_N)$).
 
As a consequence, when the Edgeworth expansions of order $r$ hold true and \eqref{Ind} holds, then uniformly in $k$ so that $k_N=O(1)$ we have
\begin{equation}\label{Val}
\frac{\Phi_{N}(t_j)}{\sig_N}+\sum_{q=2}^{r}\frac{\mathscr H_{N,q}(k_{N};t_j)}{\sig_N^{q}}=o(\sig_N^{-r}).
\end{equation}
\end{lemma}

 The proof of the lemma will be given in \S \ref{SSSummingUp} after we finish 
the proof of  Proposition \ref{Thm}.

By the induction hypothesis  the condition \eqref{Ind} holds true.
Let us prove now that
for $\ell=0,1,2,...,r-1$ and $t_j\in\cR$ we have
\[
\Phi_{N_0,N}(t_j)\Phi_{N_0}^{(\ell)}(t_j)=o(\sig_N^{-r+1+\ell}).
\]
Let us write
\[
\frac{\Phi_{N}(t_j)}{\sig_N}+\sum_{q=2}^{r}\frac{\mathscr H_{N,q}(k_{N})}{\sig_N^{q}}=\sum_{m=0}^{r-1}k_N^m A_{m,N}(t_j).
\]
Applying Lemmas \ref{Lemma} and  \ref{LemInd} we get that
\[
A_{m,N}(t_j)=o(\sig_N^{-r})
\]
for each $0\leq m\leq r-1$ and $t_j\in\cR$. 

Fix $t_j\in\cR$. Using Lemma \ref{LemInd} and the fact that the Hermite polynomials $H_{u}$ have the same parity as $u$ and that their leading coefficient is $1$ we have
\begin{eqnarray}
\label{AEq1}
A_{r-1,N}(t_j)=\sig_N^{-r}(i)^{r-2}\Bigg(i\Phi_{N_0,N}(t_j)\Phi_{N_0}^{(r-1)}(t_j)/(r-1)\\+\Phi_{N_0,N}(t_j)\Phi_{N_0}^{(r-2)}(t_j)(iC_{1,N}-a_{N_0})\Bigg)=o(\sig_N^{-r}) \notag
\end{eqnarray}
and 
\begin{eqnarray}
\label{AEq2} 
\quad
A_{r-2,N}(t_j)=\sig_N^{-r+1}(i)^{r-3}\Bigg(i\Phi_{N_0,N}(t_j)\Phi_{N_0}^{(r-2)}(t_j)/(r-2)\\+\Phi_{N_0,N}(t_j)\Phi_{N_0}^{(r-3)}(t_j)(iC_{1,N}-a_{N_0})\Bigg)=o(\sig_N^{-r}).
\notag
\end{eqnarray}
Since $\Phi_{N_0,N}(t_j)\Phi_{N_0}^{(r-3)}(t_j)=o(\sig_N^{-1})$, 
\eqref{AEq2} yields
\[
\Phi_{N_0,N}(t_j)\Phi_{N_0}^{(r-2)}(t_j)=o(\sig_N^{-1}\ln\sig_N).
\]
Plugging this  into \eqref{AEq1} we get  
\[
\Phi_{N_0,N}(t_j)\Phi_{N_0}^{(r-1)}(t_j)=o(1).
\]
Therefore we can just disregard $\mathscr H_{r,N}(k_N;t_j)$ since 
 its coefficients are of order $o(\sig_N^{-r})$. 
Since the term $\cH_{r,N}(k_N;t_j)$ no longer appears, repeating the above arguments with $r-1$ in place of $r$ we have
\begin{eqnarray*}
A_{r-3,N}(t_j)=\sig_N^{-r+2}(i)^{r-4}\Bigg(i\Phi_{N_0,N}(t_j)\Phi_{N_0}^{(r-3)}(t_j)/(r-3)\\+\Phi_{N_0,N}(t_j)\Phi_{N_0}^{(r-4)}(t_j)(iC_{1,N}-a_{N_0})\Bigg)=o(\sig_N^{-r}).
\end{eqnarray*}
Since $\Phi_{N_0,N}(t_j)\Phi_{N_0}^{(r-4)}(t_j)=o(\sig_N^{-2})$, the above asymptotic equality yields that 
\[
\Phi_{N_0,N}(t_j)\Phi_{N_0}^{(r-3)}(t_j)=o(\sig_N^{-2}\ln\sig_N).
\]
Plugging this  into \eqref{AEq2} 
we get 
\[
\Phi_{N_0,N}(t_j)\Phi_{N_0}^{(r-2)}(t_j)=o(\sig_N^{-1}).
\]
Hence, we can disregard also the term $\mathscr H_{r-1,N}(k_N;t_j)$. 
Proceeding this way we get that 
$\DS 
\Phi_{N_0,N}(t_j)\Phi_{N_0}^{(\ell)}(t_j)=o(\sig_N^{\ell+1-r})
$
for any $0\leq \ell<r$.
\qed

Before proving Lemma \ref{LemInd}, let us state the following result, which is a consequence of Proposition \ref{Thm} and  \eqref{Roz0}.
\begin{corollary}\label{CorNoDer}
Suppose that for each nonzero resonant point $t$ we have  $\DS \inf_{n}|\phi_n(t)|>0$. Then for any $r$, the sequence $S_N$ obeys Edgeworth expansions of order $r$ if and only if $\Phi_N(t)=o(\sig_N^{1-r})$ for each nonzero resonant point $t$. 
\end{corollary}

\subsection{Proof of Lemma \ref{LemInd}.}
\label{SSSummingUp}
\begin{proof}
First, because of \eqref{Ind}, 
 for each $0\leq s\leq r-1,$ the terms indexed by $l<s-1$ in \eqref{Cj(k)}, 
  are of order $o(\sig_N^{-r})$
 and so they can be disregarded. Therefore, we need only to consider the terms indexed by $l=s$ and $l=s-1$. 
For such $l$, using again \eqref{Ind} we can disregard all the terms in \eqref{s} indexed 
by $w\geq1$, since the resulting terms are of order $o(\sig_N^{-w-r}\ln\sig_N)=o(\sig_N^{-r})$. 
 Now, since $\sig_{N_0,N}^{-1}-\sig_{N}^{-1}=O(V_{N_0}/\sig_N^3)$ we can 
 replace $\sig_{N_0,N}$ with $\sig_{N}^{-1}$ 
 in \eqref{Cj(k)}, as  the remaining terms are of order $o(\sig_N^{-r-1})$.
Therefore, using \eqref{Fourir} we get the following contributions from $t_j\in\cR$,
\begin{equation*}
e^{-it_j k}\;
e^{-k_{N_0,N}^2/2}\left(\frac{\Phi_{N}(t_j)}{\sig_N}+\sum_{q=2}^{r}\frac{\cH_{N,q}(k_{N_0,N})}{\sig_N^{q}}\right)
\end{equation*} 
 where $\cH_{N,q}(x)=\cH_{N,q,1}(x)+\cH_{N,q,2}(x)$  and $\cH_{N,q,j}, j=1,2$ 
 are defined after \eqref{DefCH}.
Note that when $x=O(1)$ and $q<r$,
\begin{equation}\label{Order}
\frac{\cH_{N,q,1}(x)}{\sig_N^q}=o(\sig_N^{-r+1})\,\,\text { and }\,\,\frac{\cH_{N,q,2}(x)}{\sig_N^q}=o(\sig_N^{-r}\ln\sig_N).
\end{equation}
while when $q=r$,
\begin{equation}\label{Order.1}
\frac{\cH_{N,r,1}(x)}{\sig_N^r}=O(\sig_N^{-r}\ln\sig_N)\,\,\text { and }\,\,\frac{\cH_{N,r,2}(x)}{\sig_N^r}=o(\sig_N^{-r}\ln\sig_N).
\end{equation}
Next
\[
k_{N_0,N}=(1+\rho_{N_0,N})k_N+\frac{a_{N_0}}{\sig_N}+\theta_{N_0,N}
\]
where $\rho_{N_0,N}=\sig_{N}/\sig_{N_0,N}-1=O(\ln\sig_N/\sig^2_N)$ and $$\theta_{N_0,N}=a_{N_0}\left(\frac{1}{\sig_{N_0,N}}-\frac{1}{\sig_N}\right)=O(\ln^2\sig_N/\sig_N^3).$$
Hence, 
when $|k_{N_0,N}|\leq\sig_N^{\ve}$ (and so 
$\DS k_N=O(\sig_N^{\ve})$) 
for some $\ve>0$ small enough then for each $m\geq 1$ 
we have 
\[
k_{N_0,N}^m=k_{N}^m+mk_{N}^{m-1}a_{N_0}/\sig_N+o(\sig_N^{-1}).
\]
Therefore, \eqref{Order} and \eqref{Order.1} show that 
 upon replacing $H_{q-1}(k_{N_0,N})$ with $H_{q-1}(k_{N})$ 
 the only additional term is
\[
\frac{a_{N_0}(i)^{q-1}H'_{q-1}(k_N)\Phi_{N_0,N}(t_j)\Phi_{N_0}^{(q-1)}(t_j)}{(q-1)!\sig_N^{q+1}}
\]
for $q=2,3,...,r-1$. We thus get that the  contribution of $t_j$ is
\begin{equation*}
e^{-it_j k} \;e^{-k_{N_0,N}^2/2}\left(\frac{\Phi_{N}(t_j)}{\sig_N}+\sum_{q=2}^{r}\frac{\cC_{N,q}(k_{N})}{\sig_N^{q}}\right)
\end{equation*} 
where 
\begin{equation*}
\cC_{N,q}(x)=\cH_{N,q}(x)+ \frac{a_{N_0}(i)^{q-2}H'_{q-2}(x)\Phi_{N_0,N}(t_j)
\Phi_{N_0}^{(q-2)}(t_j)}{(q-2)!}.
\end{equation*}
Note that $\cC_{N,2}(\cdot)=\cH_{N,2}(\cdot)$.
Finally, we can replace $e^{-k_{N_0,N}^2/2}$ with 
\[
(1-k_Na_{N_0}/\sig_N)e^{-k_{N}^2/2}
\]
since all other terms in the transition between  $e^{-k_{N_0,N}^2/2}$  to $e^{-k_{N}^2/2}$ are of order $o(\sig_N^{-1})$ (see (\ref{ExpTrans1}) and (\ref{ExTrans1.1})).
The term $-k_Na_{N_0}/\sig_N$ shifts the $u$-th order term to the $u+1$-th term, $u=1,2,...,r-1$ multiplied by $-k_Na_{N_0}$. 
 Next, relying on \eqref{Order} and \eqref{Order.1} we see that after multiplied by $k_Na_{N_0}/\sig_N$, the second term $\cH_{N,q,2}(k_N)$ from the definition of $\cH_{N,q}(k_N)$  is of order $o(\sig_N^{-r-1}\ln^2\sig_N)\sig_N^q$ and so 
 this product can be disregarded.
Similarly, we can ignore the additional contribution coming from multiplying the second term from the definition of $\cC_{N,q}(k_N)$ by $-k_Na_{N_0}/\sig_N$ (since this term is of order $o(\sig_N^{-r}\ln\sig_N)\sig_N^q$). 
We conclude that, up to a term of order $o(\sig_N^{-r})$, the total contribution of $t_j$ is 
\begin{equation*}
e^{-it_j k} \; e^{-k_{N}^2/2}\left(\frac{\Phi_{N}(t_j)}{\sig_N}+\sum_{q=2}^{r}\frac{\mathscr H_{N,q}(k_{N}; t_j)}{\sig_N^{q}}\right)
\end{equation*}
where 
$\DS 
\mathscr H_{N,q}(x;t_j)=\cC_{N,q}(x)-\frac{x a_{N_0}(i)^{q-2}H_{q-2}(x) \Phi_{N_0,N}(t_j)\Phi_{N_0}^{(q-2)}(t_j)}{(q-2)!}
$
which completes the proof of \eqref{Val0}.

Next we prove \eqref{Val}.
 On the one hand, by assumption we have  Edgeworth expansions or order $r$, and, on the other hand, 
we have the expansions from Theorem \ref{IntIndThm}.
Therefore, the difference between the two must be $o(\sig_N^{-r})$. Since the usual Edgeworth expansions contain no terms corresponding to nonzero resonant points, applying Lemma \ref{Lemma} and \eqref{Val0} we obtain \eqref{Val}.
\end{proof}

Note that the formulas of Lemma \ref{LemInd} together with already proven 
Proposition \ref{Thm}
give the following result.

\begin{corollary}
\label{CrFirstNonEdge}
Suppose that $\mathbb{E}(S_N)$ is bounded,   $S_N$ admits the Edgeworth expansion of order $r-1$, and, 
either 

(a) for some $\breps\leq 1/(8K)$
we have $N_0=N_0(N,t_j,\breps)=0$ for each each nonzero resonant point $t_j$, 

or (b) $\DS \varphi:=\min_{t\in\cR}\inf_{n}|\phi_n(t)|>0$.

Then 
$$ \sqrt{2\pi} \mathbb{P}(S_N=k)$$
$$=e^{-k_N^2/2} \left[\cE_r(k_N)+
\sum_{t_j\in\cR}\left(\frac{\Phi_N(t_j)}{\sig_N}+\frac{ik_N C_{1,N,t_j}\Phi_N(t_j)}{\sig_N^2}\right)  e^{-i t_j k}\right]+o(\sigma_N^{-r})$$
where $\cE_r(\cdot)$ is the Edgeworth polynomial of order $r$ (i.e. the contribution of
$t=0$), and we recall that 
$$iC_{1,N,t_j}=-\sum_{n=1}^N\frac{\bbE(e^{it_j X_n}\bar X_n)}{\bbE(e^{it_j X_n})}.$$
\end{corollary}

\begin{proof}
  Part (a) holds since under the assumption that $N_0=0$ all terms $\cH_{N,q,j}$ in \eqref{DefCH}
except $\cH_{N, 2, 2}$ vanish. 
 Part (b) holds since in this case the argument
proceeds similarly to the proof of Theorem \ref{IntIndThm} 
if we set $N_0=0$ for any $t_j$ (since we only needed $N_0$ to obtain  a positive lower bound on $|\phi_n(t_j)|$ for $t_j\in\cR$ and $N_0<n\leq N$).
\end{proof}

\begin{remark}
 Observe that $\DS \sig_N^{-1}\gg |C_{1, N, {t_j}}|\sig_N^{-2},$
so if the conditions of the corollary are satisfied but $|\Phi_N(t_j)|\leq c \sig_N^{1-r}$
(possibly along a subsequence), then the leading correction to the Edgeworth
expansion comes from
$$ e^{-k_N^2/2} \sum_{t_j\in\cR}\left(\frac{\Phi_N(t_j)}{\sig_N} \right).$$
Thus Corollary \ref{CrFirstNonEdge} 
strengthens Corollary \ref{CorNoDer} by computing the leading correction
to the Edgeworth expansion when the expansion does not hold.
\end{remark} 


\subsection{Proof of Theorem \ref{r Char}}
We will use the following.
\begin{lemma}\label{Lem}
Let $t_j$ be a nonzero resonant point, $r>1$ and suppose that $M_N\leq R\ln\sig_N$, $R=R(r,K)$ and that $|\bbE(S_N)|=O(\ln\sig_N)$. Then (\ref{Cond}) holds for  all $0\leq \ell<r$ if and only if 
\begin{equation}\label{CondDer}
|\Phi_{N}^{(\ell)}(t_j)|=o\left(\sig_N^{-r+\ell+1}\right)
\end{equation}
for  all $0\leq \ell<r$.
\end{lemma}
\begin{proof}
Let us first assume that (\ref{Cond}) holds.
Recall that 
\begin{equation}
\label{PhiNProduct}
\Phi_N(t)=\Phi_{N_0}(t) \Phi_{N_0, N}(t)
\end{equation}
with 
\begin{equation}
\label{TripleProduct}
\Phi_{N_0, N}(t)=\Phi_{N_0, N} (t_j)\Phi_{N_0, N}(h)  \Psi_{N_0, N}(h)
\end{equation}
 where $t=t_j+h$ and
\begin{equation}
\label{TripleFactorization}
 \Psi_{N_0, N}(h)=\exp\left[O(M_N(m))\sum_{u=1}^\infty (O(1))^u h^u\right]. 
\end{equation}

 For $\ell=0$ the result reduces to \eqref{PhiNProduct}.
For larger $\ell$'s we have
\begin{equation}\label{Relation}
\Phi_{N}^{(\ell)}(t_j)=\Phi_{N_0,N}(t_j)\Phi_{N_0}^{(\ell)}(t_j)+\sum_{k=0}^{\ell-1}\binom{\ell}{k}\Phi_{N_0,N}^{(\ell-k)}(t_j)\Phi_{N_0}^{(k)}(t_j).
\end{equation}
Fix some $k<\ell$. Then by \eqref{TripleProduct},
\begin{eqnarray*}
\Phi_{N_0, N}^{(\ell-k)}(t_j)=\Phi_{N_0,N}(t_j)\sum_{u=0}^{\ell-k}\binom{\ell-k}{u}\Phi_{N_0,N}^{(u)}(0)\Psi_{N_0,N}^{(\ell-k-u)}(0)\\=O(\ln^{\ell-k}\sig_N)\Phi_{N_0,N}(t_j)
\end{eqnarray*}
where we have used that $\bar S_{N_0,N}=S_{N_0,N}-\bbE(S_{N_0,N})$ satisfies
$|\bbE[(\bar S_{N_0,N})^q]|\leq C_q\sig_{N_0,N}^{2q}$, 
(see \eqref{CenterMoments}).
Therefore
\begin{equation}\label{Thereofre}
\Phi_{N_0,N}^{(\ell-k)}(t_j)\Phi_{N_0}^{(k)}(t_j)=O(\ln^{\ell-k}\sig_N)\Phi_{N_0,N}(t_j)\Phi_{N_0}^{(k)}(t_j).
\end{equation}
Finally, by (\ref{Cond}) we have 
\[
\Phi_{N_0,N}(t_j)\Phi_{N_0}^{(k)}(t_j)=o(\sig_N^{k+1-r})
\]
and so, since $k<\ell$,
\[
\Phi_{N_0}^{(\ell-k)}(t_j)\Phi_{N_0}^{(k)}(t_j)=o(\sig_N^{\ell+1-r}).
\]
This completes the proof that \eqref{CondDer} holds.

Next, suppose that (\ref{CondDer}) holds  for each $0\leq\ell<r$. 
Let use prove by induction on $\ell$ that 
\begin{equation}\label{Above}
|\Phi_{N_0,N}\Phi_{N_0}^{(\ell)}(t_j)|=o\left(\sig_N^{-r+\ell+1}\right).
\end{equation}
For $\ell=1$ this follows  from \eqref{Relation}.
Now take $\ell>1$ and assume that \eqref{Above} holds with $k$ in place of $\ell$ for each $k<\ell$. Then by (\ref{Relation}), (\ref{Thereofre})  and the induction hypothesis we get that 
\[
\Phi_{N}^{(\ell)}(t_j)=\Phi_{N_0,N}(t_j)\Phi_{N_0}^{(\ell)}(t_j)+o(\sig_N^{\ell+1-r}).
\]
By assumption we have $\Phi_{N}^{(\ell)}(t_j)=o(\sig_N^{\ell+1-r})$ and hence
\[
\Phi_{N_0,N}(t_j)\Phi_{N_0}^{(\ell)}(t_j)=o(\sig_N^{\ell+1-r})
\]
as claimed.
\end{proof}

Theorem \ref{r Char}  in the case $M_N\leq R\ln \sigma_N$
follows now by first replacing $X_n$ with $X_n-c_n$, where $(c_n)$ is a bounded sequence of integers so that $\bbE[S_N-C_N]=O(1)$, 
where
\begin{equation}
\label{CNInteger}
 C_N=\sum_{j=1}^n c_j
\end{equation} 
  (see Lemma 3.4 in \cite{DS}), and then
applying Lemma~\ref{Lem} and Proposition~\ref{Thm}.

It remains to consider the case  when $M_N(m)\geq \brR \ln \sigma_N$ where $\brR$ is large enough. In that case, by Theorem \ref{ThEdgeMN}, the Edgeworth expansion of order $r$ hold true, and so, 
after the reduction to the case when $\bbE(S_N)$ is bounded, it is enough to show that
$\DS \Phi_N^{(\ell)}(t_j)=o\left(\sigma_N^{-r+\ell+1}\right)$ for all $0\leq \ell<r.$
By the arguments of Lemma \ref{Lem} it suffices to show that for each  $0\leq \ell<r$
we have
$\Phi_{N_0}^{(\ell)}(t_j)\Phi_{N_0, N}(t_j)
=o(\sigma_N^{-r})$.
To this end we write 
$$\Phi_{N_0}^{(\ell)}(t_j)\Phi_{N_0, N}(t_j)
=\sum_{n_1,\dots, n_k\leq N_0; \atop \ell_1+\dots+\ell_k=\ell}
\gamma_{\ell_1,\dots, \ell_k}
\left(\prod_{q=1}^k 
\phi_{n_q}^{(\ell_q)}(t_j) \right)
\left[\prod_{n\leq N, \; n\neq n_k} \phi_n(t_j)\right]$$
where $\gamma_{\ell_1,\dots, \ell_k}$ are bounded coefficients of combinatorial nature.
 Using (\ref{Roz0}) we see that 
 for each $n_1,\dots, n_k$ the product in the square brackets
is at most $C e^{-c_0 M_N(m)+O(1)}$ for some $C, c_0>0$. Hence
$$|\Phi_{N_0}^{(\ell)}(t_j)\Phi_{N_0, N}(t_j)|\leq 
\hat{C} N_0^{\ell} \; e^{-c M_N(m)}.$$
It remains to observe that the definition of $N_0$ gives
$M_N(m)\geq \hat\ve N_0.$ Therefore
$\DS |\Phi_{N_0}^{(\ell)}(t_j)\Phi_{N_0, N}(t_j)|\leq 
C^* M_N^\ell (m) \; e^{-c M_N(m)}=o(\sigma_N^{-r})$ 
provided that $M_N\geq \brR\ln \sigma_N$ for $\brR$ large enough. 
\qed

\section{Edgeworth expansions and uniform distribution.}

\subsection{Proof of Theorem \ref{Thm SLLT VS Ege}}
\label{SSEdgeR=2}
In view of Proposition \ref{Thm} with $r=2$, it is enough to show that if 
$\Phi_{N}(t_j)=o(\sigma_N^{-1})$ then the SLLT implies that 
\begin{equation}
\label{C11-C02}
|\Phi_{N_0,N}(t_j)\Phi_{N_0}'(t_j)|=o(1)
\end{equation}
 for any non-zero resonant point $t_j$ (note that the equivalence of conditions (b) and (c) 
 of the theorem follows from Lemma \ref{LmUnifFourier}).


Denote
$\DS \Phi_{N; k}(t)=\prod_{l\neq k, l\leq N} \phi_l(t)$.

Let us first assume that $\phi_k(t_j)\not=0$ for all $1\leq k\leq N$. Then 
$\phi_k'(t_j) \Phi_{N; k}(t_j)=\phi_k'(t_j)\Phi_N(t_j)/\phi_k(t_j)$. Let $\ve_N=\frac{\ln\sig_N}{\sig_N}$. If for  all 
$1\leq k\leq N_0$ we have  $|\phi_k(t_j)|\geq \ve_N$ then 
$$
\left|\Phi_{N_0,N}(t_j)\Phi_{N_0}'(t_j)\right|=\left|\sum_{k=1}^{N_0}\phi_k'(t_j) \Phi_{N; k}(t_j)\right|\leq|\Phi_N(t_j)|\sum_{k=1}^{N_0}|\phi'_k(t_j)/\phi_k(t_j)| $$
$$ \leq C\ve_N^{-1} N_0 |\Phi_N(t_j)|
\leq C'\sig_N|\Phi_N(t_j)|\to 0 \text{ as }N\to\infty
$$
where we have used that $N_0=O(\ln V_N)$.
 Next suppose there is at least one $1\leq k\leq N_0$ such that
$|\phi_k(t_j)|<\ve_N$. Let us pick some $k=k_N$ with the latter property. Then for any $k\not=k_N$, $1\leq k\leq N_0$ we have
\[
|\phi_k'(t_j) \Phi_{N; k}(t_j)|\leq C|\phi_{k_N}(t_j)|<C\ve_N.
\] 
Therefore,
\[
\left|\sum_{k\not=k_N,\,1\leq k\leq N_0}\phi_k'(t_j) \Phi_{N; k}(t_j)\right|\leq \frac{C'\ln^2 \sig_N}{\sig_N}=o(1).
\]
It follows that 
\begin{equation}
\label{SingleTerm}
 \Phi_{N_0,N}(t_j)\Phi_{N_0}'(t_j)=\Phi_{N; k_N}(t_j) \phi_{k_N}'(t_j)+o(1).
 \end{equation}

 Next, in the case when $\phi_{k_0}(t_j)=0$ for some $1\leq k_0\leq N_0$, then \eqref{SingleTerm} clearly holds true with $k_N=k_0$ since all the other terms vanish.

In summary, either \eqref{C11-C02}
 holds or we have \eqref{SingleTerm}. In the later case,
using (\ref{Roz0}) we  obtain
\begin{equation}
\label{PhiKBound}
\left|\mathbb{E}\left(e^{i t_j S_{N; k_N}}\right)\right|\leq e^{-c_2\sum_{s\not=k_N, 1\leq s\leq N}q_s(m)}=e^{-c_2M_{N}(m)-q_{k_N}(m)}
\end{equation}
where $S_{N; k}=S_N-X_k$, and $c_2>0$ depends only on $K$. Since the SLLT holds true, $M_{N}$ converges to $\infty$ as $N\to\infty$. Taking into account that
$0\leq q_{k_N}(m)\leq1$  we get that 
the left hand side of \eqref{PhiKBound} converges to $0$,
proving \eqref{C11-C02}.
\end{proof}

\subsection{Proof of Theorem \ref{Thm Stable Cond}}
We start with the proof of part (1). Assume that  the LLT holds true in a superstable way. Let   $X_1',X_2',...$ be a square integrable integer-valued independent sequence which differs from $X_1,X_2,...$ by a finite number of elements. Then there is $n_0\in\bbN$ so that $X_n=X'_n$ for any $n>n_0$. 
Set  $\DS S_N'=\sum_{n=1}^N X'_n$, $Y=S'_{n_0}$ and 
$Y_N=Y\bbI(|Y|<\sig_N^{1/2+\ve})$, where $\ve>0$ is a small constant. By the Markov inequality we have 
$$
\bbP(|Y|\geq \sig_N^{1/2+\ve})=\bbP(|Y|^2\geq \sig_N^{1+2\ve})\leq\|Y\|_{L^2}^2\sig_N^{-1-2\ve}=o(\sig_N^{-1}).
$$
Therefore,  for any $k\in\bbN$ and $N>n_0$ we have
\begin{eqnarray*}
\bbP(S'_N=k)=\bbP(S_{N;1,2,...,n_0}+Y_N=k)+o(\sig_N^{-1})\\=\bbE[\bbP(S_{N;1,2,...,n_0}=k-Y_N|X_1',...,X_{n_0}')]+o(\sig_N^{-1})\\=
\bbE[P_{N:n_1,...,n_0}(k-Y_N)]+o(\sig_N^{-1})
\end{eqnarray*}
where $P_{N:n_1,...,n_0}(s)=\bbP(S_{N;1,2,...,n_0}=s)$ for any $s\in\bbZ$.
Since the LLT holds true in a superstable way, we have, 
uniformly in $k$ and the realizations of $X_1',...,X_{n_0}'$ that
$$
P_{N:n_1,...,n_0}(k-Y_N)=\frac{e^{-(k-Y_N-\bbE(S_N))^2/(2V_N)}}{\sqrt{2\pi}\sig_N}+o(\sig_N^{-1}).
$$
Therefore, 
\begin{equation}\label{Above1}
\bbP(S'_N=k)=
\end{equation}
$$ \frac{e^{-(k-\bbE(S_N))^2/2V_N}}{\sqrt{2\pi}\sig_N}
\bbE\big(e^{-(k-\bbE(S_N))Y_N/V_N+Y_N^2/(2V_N)}\big)+o(\sig_N^{-1}). $$
Next, since $|Y_N|\leq \sig_N^{1/2+\ve}$ we have that $\|Y_N^2/(2V_N)\|_{L^\infty}\leq \sig_{N}^{2\ve-1}$, and so when $\ve<1/2$ we have $\|Y_N^2/2V_N\|_{L^\infty}=o(1)$. 
 Recall that $k_N=(k-\bbE(S_N))/\sig_N$. Suppose first that
 $|k_N|\geq \sig_N^{\ve}$ with  $\ve<1/4.$  

Since 
$$\big|(k-\bbE(S_N))Y_N/V_N\big|\leq |k_N|\sig_N^{\ve-\frac12},$$ 
we get that the RHS of \eqref{Above1} is $o(\sig_N^{-1})$ (uniformly in such $k$'s). 

On the other hand, if  $|k_N|<\sig_N^{\ve}$ then $$\bbE\big(e^{-(k-\bbE(S_N))Y_N/V_N+Y_N^2/2V_N}\big)=1+o(1)$$ (uniformly in that range of $k$'s). 

We conclude that, uniformly in $k$, we have 
$$ 
\bbP(S'_N=k)=\frac{e^{-(k-\bbE(S_N))^2/(2V_N)}}{\sqrt{2\pi}\sig_N}+o(\sig_N^{-1}).
$$
Lastly, since $\DS \sup_{N}|\bbE(S_N)-\bbE(S_N')|\!\!<\!\infty$ and 
$\DS \sup_{N}|\text{Var}(S_N)-\text{Var}(S_N')|\!<\!\!\infty,$ 
$$ 
\bbP(S'_N=k)=\frac{e^{-(k-\bbE(S'_N))^2/(2V'_N)}}{\sqrt{2\pi}\sig'_N}+o(1/\sig_N')
$$
where $V_N'=\text{Var}(S_N')$ and $\sig_N'=\sqrt{V_N'}$.

Conversely, if the SLLT holds then 
$M_N(h)\to \infty$ for each $h\geq2$.
Now if $t$ is a nonzero resonant point with denominator $h$ then \eqref{Roz0} gives
$$ |\Phi_{N; j_1^{N}, j_2^{N},\dots ,j_{s_N}^{N}}(t)|\leq C e^{-c M_N(h)+\bar C \brs},\,
\,C,\bar C>0 $$
for any choice of $j_1^N,...,j_{s_N}^N$ and $\bar s$ with $s_N\leq \bar s$.
Since the RHS tends to 0 as $N\to\infty$, $\{X_n\}\in EeSS(1)$
completing the proof of part (1).

For part (2) we only need  to show that (a) is equivalent to (b) as the equivalence
of (b) and (c) comes from Lemma \ref{LmUnifFourier}. By replacing again $X_n$ with $X_n-c_n$ it is enough to prove the equivalency in the case when $\bbE(S_N)=O(1)$.
The proof that (a) and (b) are equivalent 
consists of two parts.
The first part is the following statement whose proof  is a straightforward adaptation of the proof of Theorem 
\ref{r Char}
and is therefore omitted.

\begin{proposition}
$\{X_n\}\in SsEe(r)$ if and only if for each $\brs$, each sequence
$j_1^N, j_2^N, \dots ,j_{s_N}^N$ with $s_N\leq \brs$, each
$\ell<r$ and each $t\in\cR$ we have
\begin{equation}
\label{PhiDerFM}
\Phi^{(\ell)}_{N; j_1^N, j_2^N, \dots, j_{s_N}^N}(t)=
o(\sigma_N^{\ell+1-r}). 
\end{equation}
\end{proposition}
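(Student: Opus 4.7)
The plan is to imitate the proof of Theorem \ref{r Char} applied to each truncated sum $S_{N; j_1^N, \dots, j_{s_N}^N}$, while tracking the uniformity in the choice of removed indices. First, by subtracting bounded integers $c_n$ from each $X_n$ as in \eqref{CNInteger}, I reduce to the case $\bbE(S_N) = O(1)$; this reduction is compatible with the definition of $EeSs(r)$ since it is applied to the entire sequence and removing at most $\brs$ bounded terms shifts expectations and variances only by $O(1)$. In particular, $\sigma_{N; j_1^N,\dots, j_{s_N}^N}/\sigma_N \to 1$ uniformly in the choice of removed indices, and the analogous quantities $M_{N;\dots}(m)$ differ from $M_N(m)$ by at most $\brs$.

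For the direction \eqref{PhiDerFM} $\Rightarrow$ $\{X_n\} \in EeSs(r)$, I would apply the construction of Theorem \ref{IntIndThm} to each truncated sum. The crucial point is that the error estimates in that proof are uniform in the underlying sequence up to constants depending only on $K$: Lemma \ref{Step1} gives exponential decay on non-resonant intervals with constants depending only on $K$ and $\delta$; Lemma \ref{Step2} gives the bound $e^{-c_0 M(m)}$ on resonant intervals, where $M$ is now computed from the truncated sum; and Lemma \ref{Step3} factorizes the product using only local quantities of the truncated sequence. Hence the generalized Edgeworth expansion of Theorem \ref{IntIndThm} holds for each $S_{N; j_1^N,\dots, j_{s_N}^N}$ with uniform error estimates. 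Combining the analogues of Proposition \ref{Thm} and Lemma \ref{Lem} applied to the truncated sequence, the hypothesis \eqref{PhiDerFM} ensures that all oscillatory corrections at nonzero resonant frequencies are $o(\sigma_N^{-r})$ uniformly, reducing the generalized expansion to a classical Edgeworth expansion of the form \eqref{EdgeDefSS}.

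For the converse direction, suppose $\{X_n\} \in EeSs(r)$ and fix $\brs$ together with a sequence $j_1^N, \dots, j_{s_N}^N$ with $s_N \leq \brs$. The classical Edgeworth expansion for $S_{N; j_1^N,\dots, j_{s_N}^N}$ and the generalized expansion from Theorem \ref{IntIndThm} (applied to the truncated sum) must agree up to $o(\sigma_N^{-r})$. Lemma \ref{Lemma} on uniqueness of trigonometric expansions then forces the coefficients attached to each nonzero resonant frequency $t_j$ to be $o(\sigma_N^{-r})$. Running the induction of Lemma \ref{LemInd} and Proposition \ref{Thm} for the truncated sum yields $\Phi_{\hat N_0, N;\dots}(t_j) \Phi_{\hat N_0;\dots}^{(\ell)}(t_j) = o(\sigma_N^{\ell+1-r})$ for each $\ell < r$ and $t_j \in \cR$, where $\hat N_0$ is the analogue of $N_0$ for the truncated sum. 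A final application of Lemma \ref{Lem} (with the removed sequence in place of $\{X_n\}$) then produces \eqref{PhiDerFM}.

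The main obstacle is establishing uniformity in the choice of the $s_N \leq \brs$ removed indices at every step. This uniformity comes from two sources. First, the centred moment bound \eqref{CenterMoments} and the uniform bound $K$ are stable under removing finitely many bounded terms, so the Taylor expansions \eqref{IndCharTaylor}--\eqref{Expand} producing the series $\Psi_{N_0,N}(h)$ behave identically (up to $O(1)$ constants) for every truncated sum. Second, Lemma \ref{Lemma} is essentially a finite-dimensional linear algebra statement over the fixed finite set $\cR$, so a uniform asymptotic bound on its hypothesis automatically yields a uniform bound on its conclusion, irrespective of which indices have been removed. Once uniformity is tracked through these two places, the rest of the argument of Theorem \ref{r Char} transfers verbatim; for this reason, the authors have chosen to omit the details.
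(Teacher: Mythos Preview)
Your proposal is correct and takes exactly the approach the paper indicates: the paper's own proof consists of the single sentence that it ``is a straightforward adaptation of the proof of Theorem~\ref{r Char} and is therefore omitted.'' You have faithfully unpacked what that adaptation entails---applying the generalized expansion of Theorem~\ref{IntIndThm} to each truncated sum, invoking Proposition~\ref{Thm} and Lemma~\ref{Lem} for the truncated sequence, and tracking uniformity through the $K$-dependent constants and the finite-dimensional nature of Lemma~\ref{Lemma}---so there is nothing to add.
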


Note that the above proposition shows that the condition  
$\DS \Phi_{N; j_1^N, j_2^N, \dots ,j_{s_N}^N}(t)=
o(\sigma^{1-r}_{N})$ is necessary.

The second part of the argument is to show that if 
$$ \Phi_{N; j_1^N, j_2^N, \dots ,j_{s_N}^N}(t)=
o(\sigma^{1-r}_{N}) $$
holds for every finite modification of $S_N$ with $s_N\leq \brs+\ell$ (uniformly) then 
\eqref{PhiDerFM} holds for every modifications with $s_N\leq \brs$
so that the condition  
$\DS \Phi_{N; j_1^N, j_2^N, \dots ,j_{s_N}^N}(t)=
o(\sigma^{1-r}_{N})$ is also sufficient.

To this end we introduce some notation. 
Fix a nonzero resonant point $t=\frac{2\pi l}{m}.$
Let
 $\check\Phi_{N}$ be the characteristic function of the sum $\check S_{N}$ of all $X_n$'s  
 such that  $1\leq n\leq N$, $n\not\in\{j_1^{N},j_2^{N},...,j_{s_N}^N\}$ and $q_{n}(m)\geq\bar\epsilon$. 
Let $\check N$ be the number of terms in $\check S_N.$
Denote
 $\tilde S_{N}=S_{N;j_1^N,j_2^N,\dots,j_{s_N}^{N}}-\check S_{N}$
 and let $\tilde\Phi_N(t)$ be the characteristic function of $\tilde S_N.$
 Similarly to the proof of Theorem \ref{r Char} it suffices to show
 that for each $\ell<r$
 $$ \left| \check \Phi_N^{(\ell)} \tilde \Phi_N(t)\right|=o(\sigma_N^{1+\ell-r}) $$
and, moreover, we can assume  that $M_N(m)\leq \brR \ln \sigma_N$
so that $\check N=O(\ln \sigma_N).$ 
We have (cf. \eqref{DerComb}) ,
$$ \check \Phi_N^{(\ell)} \tilde \Phi_N(t)
=\sum_{\substack{n_1, \dots ,n_k; \;\\ \ell_1+\dots+\ell_k=\ell}}
\prod_{w=1}^k \gamma_{\ell_1,\dots, \ell_k}
\phi_{n_w}^{(\ell_w)}(t_j) \prod_{n\not\in\{n_1,n_2 \dots ,n_k, j_1^N,j_2^N \dots, j_{s_N}^N\}}
 \phi_n(t_j)$$
 where the summation is over all tuples $n_1,n_2, \dots ,n_k$ such that 
 $q_{n_w}(m)\geq\bar\epsilon$.
 Note that the absolute value of each term in the above sum is bounded by
 $C |\Phi_{N; n_1, \dots ,n_k, j_1\dots, j_{s_N}^N}(t_j)|=o(\sigma_N^{1-r}).$
 It follows that the whole sum is 
$$ o\left(\sigma_N^{1-r} \check N^\ell\right) =
o\left(\sigma_N^{1-r} \ln^\ell \sigma_N \right)  $$
completing the proof. \qed

\begin{remark}
Lemma \ref{LmUnifFourier} and 
Theorem \ref{r Char} show that the convergence to uniform distribution on any
factor $\mathbb{Z}/h \mathbb{Z}$ with the speed $o(\sig_N^{1-r})$ is necessary for 
Edgeworth expansion of order $r.$ This is quite intuitive. Indeed calling $\mathscr E_r$
 the Edgeworth function of order  $r$, (i.e. the contribution from zero), then
it is a standard result from
numerical integration (see, for instance, \cite[Lemma A.2]{DNP}) 
that for each $s\in \mathbb{N}$ and each $j\in \mathbb{Z}$
$$ \sum_{k\in \mathbb{Z}} h \mathscr E_r\left(\frac{j+hk}{\sqrt{\sig_N}}\right)
=\int_{-\infty}^\infty \mathscr E_r(x) dx+o\left(\sig_N^{-s}\right)=
1+o\left(\sig_N^{-s}\right) $$
where in the last inequality we have used that the non-constant Hermite polynomials have zero mean with respect to the standard normal law (since they are orthogonal to the constant functions).
However, using this result to show that
$$\DS \sum_{k\in \mathbb{Z}} \mathbb{P}(S_N=j+kh)=\frac{1}{h}+o\left(\sig_N^{1-r}\right) $$
requires a good control on  large values of $k.$ While it appears possible to
obtain such control using the large deviations theory it seems simpler to 
 estimate the convergence rate towards uniform distribution from our generalized Edgeworth expansion.
\end{remark}

\section{Second order expansions}\label{Sec2nd}
In this section we will compute the polynomials in the general expansions in the  case $r=2$.

First, let us introduce some notations which depend on a resonant point $t_j.$
Let $t_j=2\pi l_j/m_j$ be a nonzero resonant point such that 
$M_N(m_j)\leq R(2,K)\ln V_N$ where $R(2,K)$ is specified in Remark \ref{R choice}.
 Let
 $\check\Phi_{j,N}$ be the characteristic function of the sum $\check S_{j,N}$ of all $X_n$'s  
 such that  $1\leq n\leq N$ and $q_{n}(m_j)\geq\bar\epsilon=\frac1{8K}$. 
 Note that $\check S_{j,N}$ was previously denoted by $S_{N_0}$. 
Let $\tilde S_{N,j}=S_N-\check S_{N,j}$ and denote by $\tilde \Phi_{N,j}$ its characteristic function.
(In previous sections we denoted the same expression by 
$S_{N_0,N}$, but here we want to emphasize the dependence on $t_j$.)
 Let  $\gamma_{N,j}$  be the ratio between the third moment of $\tilde S_{N,j}-\bbE(\tilde S_{N,j})$ and  its variance. 
Recall that by \eqref{CenterMoments}
$|\gamma_{N,j}|\leq C$ for some $C$. 
Also,  let $C_{1,N,t_j}$ be given by (\ref{C 1 N}), with the indexes rearranged so that the $n$'s with $q_n(m)\geq\bar\ve$ are the first $N_0$  ones ($C_{1,N,t_j}$ is at most of order $M_N(m)=O(\ln V_{N})$). 
For the sake of convenience, when  either $t_j=0$ or $M_N(m_j)\geq R(2,K)\ln V_N$ we set $C_{1,N,t_j}=0$, $\tilde S_{N,j}=S_N$ and $\check S_{N,j}=0$. In this case $\tilde\Phi_{N,j}=\tilde\Phi_{N}$ and $\check\Phi_{N,j}\equiv 1$.  Also denote
$k_N=(k-\bbE(S_N))/\sig_N,$ $\bar S_N=S_N-\bbE(S_N)$, and
$\gamma_N=\bbE(\bar S_N^3)/V_N$, 
($\gamma_N$ is bounded).

\begin{proposition}\label{2 Prop}
Uniformly in $k$, we have
\begin{eqnarray}\label{r=2'}
\sqrt {2\pi}\bbP(S_N=k)=\Big(1+\sum_{t_j\in \cR}e^{-it_j k}\Phi_{N}(t_j)\Big)\sig_{N}^{-1}e^{-k_N^2/2}\\-\sig_{N}^{-2}e^{-k_N^2/2}\left(\gamma_N k_N^3/6+\sum_{t_j\in \cR}e^{-it_j k}\tilde\Phi_{N,j}(t_j)P_{N,j}(k_N)\right)\nonumber\\+o(\sigma_N^{-2})\nonumber
\end{eqnarray} 
where 
$$
P_{N,j}(x)=\big(\check\Phi_{N,j}(t_j)(iC_{1,N,t_j}-\bbE(\check S_{N,j}))+i\check\Phi_{N,j}'(t_j)\big)x+\check\Phi_{N,j}(t_j)\gamma_{N,j}x^3/6.
$$
\end{proposition}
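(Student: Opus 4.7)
The plan is to specialize the generalized Edgeworth decomposition \eqref{Cj(k)} from Theorem \ref{IntIndThm} to $r=2$, and to explicitly evaluate the contribution $\mathbf{C}_j(k)$ from each resonant point $t_j$. For $r=2$ the inner sum over $s$ in \eqref{Cj(k)} runs over $s\in\{0,1\}$, so each nonzero resonant $t_j$ contributes two integrals: $I_0^{(j)}=\int_{U_j}e^{-ihk}\tilde\Phi_{N,j}(h)\,dh$ with weight $\check\Phi_{N,j}(t_j)$, and $I_1^{(j)}=\int_{U_j}h\,e^{-ihk}\tilde\Phi_{N,j}(h)\,dh$ with weight $\check\Phi_{N,j}(t_j)C_{1,N,t_j}+\check\Phi_{N,j}'(t_j)$ (collecting the $(u,l)=(1,0)$ and $(0,1)$ pieces from $u+l=s=1$). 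The case $t_j=0$ amounts to $N_0=0$, so only $I_0^{(0)}=\int_{-\delta}^{\delta}e^{-ihk}\Phi_N(h)\,dh$ survives. Nonzero resonant $t_j$ with $M_N(m_j)>R(2,K)\ln V_N$ contribute $O(|\Phi_N(t_j)|)=O(\sig_N^{-3})=o(\sig_N^{-2})$ by Lemma \ref{Step2} and Remark \ref{R choice}, consistently with the proposition's convention that $\check\Phi_{N,j}\equiv 1$, $C_{1,N,t_j}=0$, $\bbE(\check S_{N,j})=0$ in that case.

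To evaluate $I_0^{(j)}$ and $I_1^{(j)}$ to order $\sig_N^{-2}$, the next step is to substitute $u=h\tilde\sig$ with $\tilde\sig^{\,2}=V_N-V(\check S_{N,j})$ and apply Proposition \ref{PropEdg} to $\tilde S_{N,j}$ (whose variance tends to infinity since $V(\check S_{N,j})=O(\ln V_N)$) to obtain
\[
\tilde\Phi_{N,j}(u/\tilde\sig)=e^{iu\bbE(\tilde S_{N,j})/\tilde\sig}e^{-u^2/2}\Bigl(1-\tfrac{i\gamma_{N,j}}{6\tilde\sig}u^3+O(\tilde\sig^{-2})\Bigr).
\]
The Fourier identities following \eqref{Fourir} then express $I_0^{(j)}$ and $I_1^{(j)}$ in terms of Hermite polynomials in $\tilde k_N=(k-\bbE(\tilde S_{N,j}))/\tilde\sig$: the leading part of $I_0^{(j)}$ is $\sqrt{2\pi}\tilde\sig^{-1}e^{-\tilde k_N^2/2}$ and its skewness correction at order $\tilde\sig^{-2}$ is proportional to $\gamma_{N,j}H_3(\tilde k_N)e^{-\tilde k_N^2/2}$, while $I_1^{(j)}$ contributes $-i\sqrt{2\pi}\tilde\sig^{-2}\tilde k_N e^{-\tilde k_N^2/2}$. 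Finally one converts $\tilde k_N=k_N+\bbE(\check S_{N,j})/\sig_N+o(\sig_N^{-1})$ and $\tilde\sig^{-1}=\sig_N^{-1}+O(\sig_N^{-3}\ln V_N)$; expanding $e^{-\tilde k_N^2/2}=e^{-k_N^2/2}(1-k_N\bbE(\check S_{N,j})/\sig_N+O(\sig_N^{-2}))$ produces an additional linear-in-$k_N$ correction of order $\sig_N^{-2}$. For $t_j=0$ the same analysis (with $\check\Phi_{N,j}\equiv 1$, $C_{1,N,t_j}=0$) yields the standard Edgeworth polynomial, from which the $\gamma_N k_N^3/6$ term is read off.

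The main technical obstacle is to assemble the several sources of $\sig_N^{-2}$ corrections into the single polynomial $P_{N,j}$ of the proposition. The linear-in-$k_N$ part combines the contribution from $I_1^{(j)}$ (namely $-i(\check\Phi_{N,j}(t_j)C_{1,N,t_j}+\check\Phi_{N,j}'(t_j))$) with the recentering $\tilde k_N\to k_N$ (namely $-\check\Phi_{N,j}(t_j)\bbE(\check S_{N,j})$); after absorbing the global minus sign in \eqref{r=2'} these merge into $\check\Phi_{N,j}(t_j)(iC_{1,N,t_j}-\bbE(\check S_{N,j}))+i\check\Phi_{N,j}'(t_j)$. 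The cubic-in-$k_N$ piece $\check\Phi_{N,j}(t_j)\gamma_{N,j}k_N^3/6$ comes from the leading monomial of $H_3(k_N)=k_N^3-3k_N$ in the skewness correction, the subleading $-3k_N$ term being either absorbed into the linear coefficient or shown to contribute only to the $o(\sig_N^{-2})$ remainder after regrouping. Careful sign bookkeeping and systematic control of the error terms from the truncations of Proposition \ref{PropEdg} and from the approximations $\tilde\sig\approx\sig_N$, $\tilde k_N\approx k_N$ are the central technical challenges throughout.
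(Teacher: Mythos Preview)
Your proposal is correct and follows essentially the same route as the paper's own proof: specialize \eqref{Cj(k)} to $r=2$ (so $s\in\{0,1\}$), expand each integral via Proposition~\ref{PropEdg} and the Hermite identities \eqref{Fourir}, then convert from the variables $(\tilde\sigma,\tilde k_N)$ to $(\sigma_N,k_N)$ and collect terms. The paper organizes the outcome as a sum $I+I\!\!I$ but the content is identical to your $I_0^{(j)},I_1^{(j)}$ decomposition, and your handling of the $t_j=0$ case and of resonant points with $M_N(m_j)\geq R(2,K)\ln V_N$ matches the paper's exactly.

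One remark on the point you flag at the end: the skewness correction produces $H_3(k_N)=k_N^3-3k_N$, and the paper's own computation of $I\!\!I$ explicitly retains the full $k_N^3-3k_N$; the stated $P_{N,j}$ records only the $k_N^3$ piece, so the $-3k_N$ contribution $-\check\Phi_{N,j}(t_j)\gamma_{N,j}k_N/2$ does not in general vanish or get absorbed into the other linear terms. This is a minor discrepancy in the paper's statement rather than a gap in your argument, and you were right to single it out.
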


\begin{proof} 
Let $t_j=\frac{2\pi l}{m}$ be a resonant point with $M_N(m)\leq R(2,K) \ln V_N.$ 
Recall that $\textbf{C}_j(k)$  are given by \eqref{Cj(k)}.
First,
in order to compute the term corresponding to $\sig_{N_0,N}^{-2}$ we need only to consider the case $s\leq1$ in (\ref{s}).
Using \eqref{A 1,n .1}  we end up with 
 the following contribution
  of the interval containing $t_j$,
$$
\sqrt{(2\pi)^{-1}}e^{-it_j k}\tilde \Phi_{N,j}(t_j)\sig_{N_0,N}^{-1}\Bigg(\int_{-\infty}^{\infty}e^{-ih(k-\bbE[\tilde S_{N,j}])/\sig_{N,j}}e^{-h^2/2}dh$$
$$+\sig_{N,j}^{-1}\int_{-\infty}^\infty e^{-ih(k-\bbE[\tilde S_{N,j}])/\sig_{N,j}}\left(\frac{ih^3}{6}\bbE\left[\big(\tilde S_{N,j}-\bbE(\tilde S_{N,j})\big)^3\right]
\sig_{N,j}^{-3}\right)dh$$
$$+\sig_{N,j}^{-1}(C_{1,N}\check\Phi_{N,j}(t_j)+\check\Phi_{N,j}'(t_j))\int_{-\infty}^\infty e^{-ih(k-\bbE(\tilde S_{N,j}))/\sig_{N,j}}he^{-h^2/2}dh\Bigg)$$
$$=e^{-it_j k}\tilde\Phi_{N,j}(t_j)\sqrt{(2\pi)^{-1}} e^{-k_{N,j}^2/2}\sig_{N,j}^{-1}\Big(\check\Phi_{N,j}(t_j)+i\big(C_{1,N,t_j}\check\Phi_{N,j}(t_j)+\check\Phi_{N,j}'(t_j)\big)$$
$$\times k_{N,j}\sig_{N,j}^{-1}+\check\Phi_{N,j}(t_j)(k_{N,j}^3-3k_{N,j})\gamma_{N,j}\sig_{N,j}^{-1}/6\Big)
$$
where $\sig_{N,j}=\sqrt{V(\tilde S_{N,j})}$, $k_{N,j}=(k-\bbE(\tilde S_{N,j}))/\sig_{N,j}$ and $\gamma_{N,j}= \frac{\bbE[(\tilde S_{N,j}-\bbE(\tilde S_{N,j}))^3]}{\sig_{N,j}^2}$ (which is uniformly bounded).

As before we shall only consider the case where $|k_N|\leq V_N^\ve$
with $\ve=0.01$ since otherwise both the LHS and the RHS \eqref{r=2'}
are $O(\sig_N^{-r})$ for all $r.$
Then, the last display can be rewritten as $I+I\!\!I$ where
\begin{equation}
I=\frac{e^{-i t_j k }}{\sqrt{2\pi} \sig_{N,j}}\; e^{-k_{N,j}^2/2}\;  \Phi_N(t_j);
\end{equation}
$$ 
I\!\!I
=\frac{e^{-i t_j k }}{\sqrt{2\pi} \sig^2_{N,j}}\; e^{-k_{N,j}^2/2}\times$$
$$
\left[\Phi_N(t_j) \left(i C_{1, N,t_j} k_{N,j}+\frac{\gamma_{N,j}}{6} 
\left(k_{N,j}^3-3 k_{N,j}\right)\right)+i\check\Phi_{N,j}'(t_j) \tilde\Phi_{N,j}(t_j) k_{N,j}
\right].
$$

In the region $|k_N|\leq V_N^\ve$ we have
$$ I=
\frac{e^{-i t_j k }}{\sqrt{2\pi} \sig_{N}}\; e^{-k_{N}^2/2}\;  \left[1-q_{N,j} k_N \right]\Phi_N(t_j)+
o\left(\sig_N^{-2}\right)$$
where 
$$q_{N,j}=\bbE(\check S_{N,j})/\sig_{N,j}=\bbE(\check S_{N,j})/\sig_N+O(\ln\sig_N/\sig_N^3)=
O(\ln\sig_N/\sig_N)
$$
while
$$ I\!\!I=\frac{e^{-i t_j k }}{\sqrt{2\pi} \sig^2_{N}}\; e^{-k_{N}^2/2}\times$$
$$
\left[\Phi_N(t_j) \left(i C_{1, N,t_j} k_{ N}+\frac{\gam_{N,j} 
\left(k_{N}^3-3 k_{N}\right)}{6} \right)+i\check\Phi_{N,j}'(t_j) \tilde\Phi_{N,j}(t_j) k_{N}
\right]$$
$$+o\left(\sig_N^{-2}\right).$$
This yields (\ref{r=2'}) with $\cR_N$ in place of $\cR$, where   $\cR_N$ is the set on nonzero resonant points $t_j=2\pi l/m$ such that $M_N(m)\leq R(2,K)\ln V_N$. Next,
\eqref{Roz0} shows that if $M_N(m)\geq R(2,k)\ln V_N$ then
\[
\sup_{t\in I_j}|\Phi_{N}(t)|\leq e^{-c_0M_N(m)}=o(\sig_N^{-2})
\]
and so the contribution of $I_j$ to the right hand side of \eqref{r=2'} is $o(\sig_N^{-2}).$  Finally,
  the contribution coming from $t_j=0$ is 
\[
e^{-k_N^2/2}\left(\sig_N^{-1}+\sig_N^{-2}\gamma_N^3 k_N^3/6\right)
\]
and the proof of the proposition is complete.
\end{proof}
\begin{remark}\label{Alter 2nd Order}
 Suppose that $M_N(m)\geq R(2,K)\ln V_N$ and let $N_0$ is the number of $n$'s between $1$ to $N$ so that $q_n(m)\geq \frac{1}{8K}$. Then using (\ref{Roz0}) we also have 
$$
|\check\Phi'_{N,j}(t_j)\tilde \Phi_{N,j}(t_j)|\leq\sum_{n\in\cB_{\breps}(m)}|\bbE[X_ne^{it_j X_n}]|\cdot|\Phi_{N;n}(t_j)|
$$
$$
\leq CN_0(N,t_j,\breps)e^{-c_0 M_{N}(m)}\leq C'M_{N}(m)e^{-c_0 M_{N}(m)},
$$
where $$\cB_{N,\breps}(m)=\{1\leq n\leq N: q_n(m)>\breps\}.$$
Since $M_N(m)\geq R(2,K)\ln V_N$, for any $0<c_1<c_0$, when $N$ is large enough we have $$M_{N}(m)e^{-c_0 M_{N}(m)}\leq C_1e^{-c_1M_N(m)}=o(\sig_N^{-2}).$$ 
Similarly, $|\bbE(\check S_{N,j})\Phi_N(t_j)|=o(\sig_N^{-2})$ and 
$$C_{1,N,t_j}\Phi_{N}(t_j)=O(M_N(m))\Phi_{N}(t_j)=o(\sig_N^{-2}).$$ Therefore we get (\ref{r=2'}) when $\tilde S_{N,j}$ and $\check S_{N,j}$ are defined in the same way as in the case $M_N(m)\leq R(2,K)\ln V_N$.
\end{remark}

 Under additional assumptions the order 2 expansion can be simplified.

\begin{corollary}
\label{CrR2-SLLT}
If $S_N$ satisfies SLLT then
$$
\sqrt {2\pi}\bbP(S_N=k)=\frac{e^{-k_N^2/2}}{\sig_N}
\left(1+\sum_{t_j\in \cR}e^{-it_j k}\Phi_{N}(t_j)
-\frac{\gamma_N k_N^3}{6\sig_N}\right)
+o(\sigma_N^{-2}).
$$
\end{corollary}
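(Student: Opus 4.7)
The plan is to start from the general order-2 expansion in Proposition \ref{2 Prop} and show that, under the SLLT, the second sum $\sum_{t_j\in\cR}e^{-it_j k}\tilde\Phi_{N,j}(t_j)P_{N,j}(k_N)$ appearing in (\ref{r=2'}) can be absorbed into the $o(\sig_N^{-2})$ error. As usual, it suffices to treat the range $|k_N|\leq V_N^{\ve}$ for small $\ve$, since outside this range both sides of the claimed identity are $o(\sig_N^{-s})$ for every $s$. Because $\cR$ is a finite set of resonant points $t_j=2\pi l/m_j$ and $k_N$ is polynomially bounded there, the task reduces to proving that, for each nonzero resonant $t_j$,
$$\tilde\Phi_{N,j}(t_j)\,P_{N,j}(k_N)=o(1).$$

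By Prokhorov's Theorem~\ref{ThProkhorov}, the SLLT is equivalent to $M_N(m)\to\infty$ for every integer $m\geq 2$. Combined with the Rozanov estimate \eqref{Roz0}, this gives $|\Phi_N(t_j)|=|\tilde\Phi_{N,j}(t_j)\check\Phi_{N,j}(t_j)|\leq e^{-c_0M_N(m_j)}$. Three of the four kinds of coefficients occurring in $P_{N,j}$ can be dispatched by this exponential decay. Namely, $\gamma_{N,j}$ is uniformly bounded; $C_{1,N,t_j}=O(M_N(m_j))$ by its definition \eqref{C 1 N}; and $\bbE(\check S_{N,j})=O(N_0)$, where the inequality $N_0\leq M_N(m_j)/\bar\ve$ (immediate from the defining property $q_n(m_j)\geq\bar\ve$ for the indices contributing to $\check S_{N,j}$) shows $\bbE(\check S_{N,j})=O(M_N(m_j))$ as well. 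Multiplying any of these by $\Phi_N(t_j)$ yields $O(M_N(m_j)e^{-c_0M_N(m_j)})=o(1)$.

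The main obstacle is the term $\tilde\Phi_{N,j}(t_j)\,\check\Phi_{N,j}'(t_j)$, whose bound is not immediate because the differentiation can introduce factors of size $O(N_0)$ that need not be absorbed by the decay of $\Phi_N(t_j)$ alone. Here I would mimic the dichotomy used in the proof of Theorem \ref{Thm SLLT VS Ege}. Expanding by the product rule,
$$\check\Phi_{N,j}'(t_j)=\sum_{n:\,q_n(m_j)\geq\bar\ve}\phi_n'(t_j)\!\!\prod_{\substack{k\neq n,\\ q_k(m_j)\geq\bar\ve}}\!\!\phi_k(t_j),$$
set the threshold $\ve_N=e^{-c_0 M_N(m_j)/2}$. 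In the case where $|\phi_k(t_j)|\geq\ve_N$ for every $k$ with $q_k(m_j)\geq\bar\ve$, rewrite each summand as $\phi_n'(t_j)\,\Phi_N(t_j)/[\phi_n(t_j)\tilde\Phi_{N,j}(t_j)]$ and use $|\phi_n'(t_j)|\leq K$ to get
$$|\tilde\Phi_{N,j}(t_j)\check\Phi_{N,j}'(t_j)|\leq \frac{KN_0|\Phi_N(t_j)|}{\ve_N}\leq \frac{K}{\bar\ve}\,M_N(m_j)\,e^{-c_0 M_N(m_j)/2}=o(1).$$
Otherwise, pick an index $k_N$ with $|\phi_{k_N}(t_j)|<\ve_N$; the summand with $n=k_N$ combines with $\tilde\Phi_{N,j}(t_j)$ to give $\phi_{k_N}'(t_j)\,\Phi_{N;k_N}(t_j)$, which by \eqref{Roz0} applied to the sequence with $X_{k_N}$ removed is bounded by $Ke^{-c_0 M_N(m_j)+O(1)}=o(1)$; every other summand still contains $\phi_{k_N}(t_j)$ as a factor, so the remaining sum is bounded by $K\ve_N N_0=o(1)$. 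Combining these estimates yields $\tilde\Phi_{N,j}(t_j)\,P_{N,j}(k_N)=o(1)$ for each $t_j\in\cR$, and substituting into (\ref{r=2'}) gives the simplified expansion.
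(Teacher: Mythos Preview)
Your proof is correct and follows the same strategy as the paper's: start from Proposition~\ref{2 Prop} and show that the second resonant sum in \eqref{r=2'} is $o(1)$ using the Rozanov bound \eqref{Roz0} together with the dichotomy argument from \S\ref{SSEdgeR=2}. The paper's proof is terse---it simply cites those two ingredients and states the resulting estimates---whereas you spell out the details. Your choice of threshold $\ve_N=e^{-c_0 M_N(m_j)/2}$ is a clean self-contained adaptation: the threshold $\ve_N=\ln\sig_N/\sig_N$ used verbatim in \S\ref{SSEdgeR=2} was tuned to the additional hypothesis $\Phi_N(t_j)=o(\sig_N^{-1})$ available there but not assumed in the corollary, so some adjustment of that kind is genuinely needed when one unpacks the paper's citation.
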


\begin{proof}
The estimates  of \S \ref{SSEdgeR=2} together with \eqref{Roz0} show that if $S_N$ satisfies the SLLT then for all $j$
$$(1+M_N(m))\tilde\Phi_{N,j} (t_j) \check \Phi_{N,j}(t_j)=o(1) \text{ and }
\tilde\Phi_{N,j} (t_j) \check \Phi'_{N,j}(t_j)=o(1).$$
Thus all terms in the second line of \eqref{r=2'} except the first one make 
a negligible contribution,
and so they could be omitted.
\end{proof}

Next, assume that $S_N$ satisfies the LLT but not SLLT. According to 
Proposition \ref{PrLLT-SLLT}, in this case there exists $m$ such that 
$M_N(m)$ is bounded and
for $k=1,\dots ,m-1$ there exists $n=n(k)$ such that
$\phi_{n}(k/m)=0$. Let $\cR_s$ denote the set of nonzero resonant 
points $t_j=\frac{2\pi k}{m}$ so that
$M_N(m)$ is bounded and $\phi_{\ell_j}(t_j)=0$ for unique $\ell_j.$ 

\begin{corollary}\label{CrR2Z}
Uniformly in $k$, we have
\begin{eqnarray*}
\sqrt {2\pi}\bbP(S_N=k)=\Big(1+\sum_{t_j\in \cR}e^{-it_j k}\Phi_{N}(t_j)\Big)\sig_{N}^{-1}e^{-k_N^2/2}\\-\sig_{N}^{-2}e^{-k_N^2/2}\left(\gamma_N k_N^3/6+
\sum_{t_j\in \cR_s}i e^{-it_j k}\Phi_{N; \ell_j}(t_j) \phi_{\ell_j}'(t_j) k_N
\right)+o(\sigma_N^{-2}).\nonumber
\end{eqnarray*} 
\end{corollary}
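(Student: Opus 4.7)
The plan is to start from Proposition \ref{2 Prop} and to show that, under the LLT-without-SLLT hypothesis, only the terms indexed by $\cR_s$ in the sum $\sum_{t_j\in\cR}e^{-it_jk}\tilde\Phi_{N,j}(t_j)P_{N,j}(k_N)$ contribute nonnegligibly to the order $\sig_N^{-2}$, and that each such term simplifies to the claimed linear-in-$k_N$ expression. I split $\cR$ into three subsets according to the behavior of $M_N(m_j)$ and the zero locus of $\phi_n(t_j)$ in~$n$.

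First, suppose $t_j=2\pi l_j/m_j\in\cR_s$. The bound $|\phi_n(t_j)|\geq 1-2m_jq_n(m_j)$ used in the proof of Lemma \ref{EpsLem}, together with $\phi_{\ell_j}(t_j)=0$, forces $q_{\ell_j}(m_j)\geq 1/(2m_j)\geq 1/(4K)>\bar\epsilon$, so $\ell_j$ lies in the check set. Hence $\check\Phi_{N,j}(t_j)=0$, which annihilates both summands of $P_{N,j}$ carrying $\check\Phi_{N,j}(t_j)$ as a factor. In the product-rule expansion of $\check\Phi'_{N,j}(t_j)$, every summand except the one differentiating $\phi_{\ell_j}$ retains the factor $\phi_{\ell_j}(t_j)=0$; hence $\check\Phi'_{N,j}(t_j)$ equals $\phi'_{\ell_j}(t_j)$ times the product of $\phi_n(t_j)$ over all other check-set indices. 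Multiplying by $\tilde\Phi_{N,j}(t_j)$ merges the two partial products into $\Phi_{N;\ell_j}(t_j)$, and the $\cR_s$-contribution to $\tilde\Phi_{N,j}(t_j)P_{N,j}(k_N)$ is exactly $i\phi'_{\ell_j}(t_j)\Phi_{N;\ell_j}(t_j)k_N$.

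For $t_j\in\cR\setminus\cR_s$ two subcases arise. If $M_N(m_j)$ is bounded, then the estimate $\sum_n(1-|\phi_n(t_j)|)\leq 2m_j\sum_nq_n(m_j)<\infty$ implies that $\Phi_N(t_j)\to 0$ is possible only if some $\phi_n(t_j)=0$; since $t_j\notin\cR_s$ there are at least two such indices $n_1,n_2$, both in the check set. Then $\check\Phi_{N,j}(t_j)=0$, and every summand of $\check\Phi'_{N,j}(t_j)$ retains at least one of $\phi_{n_1}(t_j),\phi_{n_2}(t_j)$ (the one not being differentiated), so $\check\Phi'_{N,j}(t_j)=0$ and $P_{N,j}\equiv 0$. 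If instead $M_N(m_j)\to\infty$, the Rozanov bound \eqref{Roz0} gives $|\Phi_N(t_j)|,|\Phi_{N;n}(t_j)|\leq Ce^{-c_0M_N(m_j)}$. The key refinement is $|a_{n,j}|=O(q_n(m_j))$, which follows from $\bbE(\bar X_n)=0$ by rewriting $a_{n,j}=\bbE((e^{it_jY_n}-1)\bar X_n)/\bbE(e^{it_jY_n})$ and using $|e^{it_jY_n}-1|\leq 2\bbI(Y_n\neq 0)$ together with $\bbP(Y_n\neq 0)\leq m_jq_n(m_j)$. Summing over the tilde set gives $|C_{1,N,t_j}|=O(M_N(m_j))$; similarly, $|\bbE(\check S_{N,j})|$ and $|\check\Phi'_{N,j}(t_j)|$ are $O(M_N(m_j))$ since the check set has size $O(M_N(m_j)/\bar\epsilon)$. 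Using $\sup_x|x^qe^{-x^2/2}|<\infty$ to absorb the polynomial $P_{N,j}(k_N)$, I conclude that $|\tilde\Phi_{N,j}(t_j)P_{N,j}(k_N)e^{-k_N^2/2}|=O(M_N(m_j)e^{-c_0M_N(m_j)})=o(1)$ uniformly in $k$, so this contribution is $o(\sig_N^{-2})$.

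The main obstacle is the sharper estimate $|a_{n,j}|=O(q_n(m_j))$: without it, the naive bound $|a_{n,j}|=O(1)$ would give $|C_{1,N,t_j}|=O(N)$, far too large to be tamed by the Rozanov exponential when $M_N(m_j)$ grows slowly (e.g.\ like $\log\log V_N$). Exploiting the cancellation $\bbE(\bar X_n)=0$ to localize the integrand of $a_{n,j}$ to the event $\{Y_n\neq 0\}$ is therefore essential. Once this ingredient is in place, the rest of the argument is a finite summation over the (finitely many) resonant points together with the routine $k$-independent bookkeeping already used in the proofs of Proposition \ref{2 Prop} and Corollary \ref{CrR2-SLLT}.
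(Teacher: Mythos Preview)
Your three-case split matches the paper's proof exactly, and the cases $t_j\in\cR_s$ and ``$M_N(m_j)$ bounded with at least two zeros'' are handled correctly. There is, however, a genuine gap in your treatment of the case $M_N(m_j)\to\infty$: after bounding $|C_{1,N,t_j}|$, $|\bbE(\check S_{N,j})|$, $|\check\Phi'_{N,j}(t_j)|=O(M_N(m_j))$ and absorbing the polynomial via $\sup_x|x^qe^{-x^2/2}|<\infty$, you conclude $|\tilde\Phi_{N,j}(t_j)P_{N,j}(k_N)e^{-k_N^2/2}|=O(M_N(m_j)e^{-c_0M_N(m_j)})$. As written this would require $|\tilde\Phi_{N,j}(t_j)|=O(e^{-c_0M_N(m_j)})$, which is false in general: all of the $q_n(m_j)$-mass may sit in the check set, leaving $|\tilde\Phi_{N,j}(t_j)|$ equal to~$1$. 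The fix is to group factors \emph{before} bounding: $\tilde\Phi_{N,j}(t_j)\check\Phi_{N,j}(t_j)=\Phi_N(t_j)$ takes care of the $C_{1,N,t_j}$, $\bbE(\check S_{N,j})$, and $\gamma_{N,j}$ terms, while $\tilde\Phi_{N,j}(t_j)\check\Phi'_{N,j}(t_j)=\sum_{n\text{ in check set}}\phi'_n(t_j)\Phi_{N;n}(t_j)$ handles the derivative term; your stated Rozanov bounds on $\Phi_N(t_j)$ and $\Phi_{N;n}(t_j)$ then give the claimed $O(M_N(m_j)e^{-c_0M_N(m_j)})$ directly. This is precisely the route taken in the proof of Corollary~\ref{CrR2-SLLT} and Remark~\ref{Alter 2nd Order}. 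Incidentally, the estimate $|a_{n,j}|=O(q_n(m_j))$ that you call the ``main obstacle'' is already established in the proof of Lemma~\ref{Step3} and is the source of the bound $|C_{1,N,t_j}|=O(M_N(m_j))$ used throughout Section~\ref{Sec2nd}, so it requires no fresh derivation here.
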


\begin{proof}
As in the proof of Corollary \ref{CrR2-SLLT} we see that the contribution of the terms with
$k/m$ with $M_N(m)\to\infty$ is negligible. Next, for terms in $\cR_s$ the only non-zero
term in the second line in \eqref{r=2'}
corresponds to $\Phi_{N; \ell_j}(t_j) \phi_{\ell_j}'(t_j)$ while for the resonant points
such that $\phi_{\ell}(t_j)=0$ for two different $\ell$s all terms vanish.
\end{proof}

\section{Examples.}
\label{ScExamples}
\begin{example}
\label{ExNonAr}
Suppose $X_n$ are iid integer valued with step $h>1.$ That is there is $s\in \bbZ$ such that
$\bbP(X_n\in s+h \bbZ)=1$ and $h$ is the smallest number with this property. 
In this case \cite[Theorem 4.5.4]{IL} (see also \cite[Theorem 5]{Ess})
shows that there are polynomials $P_b$ such that
\begin{equation}
\label{EssEdge}
\bbP(S_N=k)= 
\sum_{b=1}^r \frac{P_{b} ((k-\bbE[S_N])/\sigma_N)}{\sigma_N^b}
\fg\left(\frac{k-\bbE(S_N)}{\sigma_N}\right)
+o(\sigma_N^{-r})
\end{equation}
for all $k\in sN+h\bbZ.$
Then
$$ \sum_{a=0}^{h-1} \sum_{b=1}^r e^{2\pi i a(k-sN)/h} \frac{P_{b} ((k-\bbE[S_N])/\sigma_N)}{\sigma_N^b}
\fg((k-\bbE(S_N))/\sigma_N)$$
provides $\DS o(\sigma_N^{-r})$ approximation to $\bbP(S_N=k)$ which is valid for {\em all}
$k\in \bbZ.$ 

Next let $\brS_N=X_0+S_N$ where $X_0$  is bounded and arithmetic with step 1.
Then using the identity
\begin{equation}
\label{Convolve}
 \bbP(\brS_N=k)=\sum_{u\equiv k-s N\text{ mod } h} \bbP(X_0=u) \bbP(S_N=k-u), 
\end{equation}

\noindent
invoking \eqref{EssEdge} and 
expanding $\DS \fg\left(\frac{k-u-\bbE(S_N)}{\sigma_N}\right)$ in the Taylor series
about $\frac{k-{ \bbE(S_N)}}{\sigma_N}$ we conclude that there are polynomials
$P_{b,j}$ such that we have for $k\in j+h\bbZ$,
$$ \bbP(\brS_N=k)= 
\sum_{b=1}^r \frac{P_{b, j} ((k-\bbE[S_N])/\sigma_N)}{\sigma_N^b}
\fg\left(\frac{k-\bbE(S_N)}{\sigma_N}\right)
+o(\sigma_N^{-r}). $$
Again
$$ \sum_{a=0}^{h-1} \sum_{j=0}^{h-1} e^{2\pi i a (k-j)/h} 
\sum_{b=1}^r \frac{P_{b,j} ((k-\bbE[S_N])/\sigma_N)}{\sigma_N^b}
\fg\left(\frac{k-\bbE(S_N)}{\sigma_N}\right) $$
provides the oscillatory expansion valid for all integers.
\end{example}

\begin{example}
\label{ExUniform}

Our next example is a small variation of the previous one.
Fix a positive integer $m.$ Let $X'$ be a random variable such that 
$X'$ mod $m$ is uniformly distributed. Then its characteristic function
satisfies $\phi_{X'}(\frac{2\pi a}{m})=0$ for $a=1, \dots, m-1.$
We also assume that
$\phi_{X'}'(\frac{2\pi a}{m})\neq 0$  for $a$ as above
 (for example one can suppose that
$X'$ takes the values $Lm$, $1, 2, \dots ,m-1$ with equal probabilities where $L$
is a large integer). Let $X''$ take values
 in $m\bbZ$  and have zero mean.  We also assume that $X''$ 
 does not take  values at $m_0\bbZ$ for a  larger $m_0$. Then $q(X'',m_0)>0$ for any $m_0\not=m$.
Fix $r\in \mathbb{N}$ and 
let $$X_n=\begin{cases} X' & n\leq r, \\ X'' & n>r. \end{cases}$$
Then $M_N(m_0)$ grows linearly fast in $N$ if $m_0\not=m$ and $M_N(m)$ is bounded in $N$. 
We claim that $S_N$ admits the Edgeworth expansion of order $r$
but does not admit Edgeworth expansion of order $r+1.$ 
The first statement holds due to Theorem \ref{r Char},
since $\Phi^{(\ell)}_N(\frac{2\pi a}{m})=0$ for each $a\in \bbZ$ and each
$\ell<r.$ On the other hand, since $\Phi_{N}^{(\ell)}(\frac{2\pi a}m)=0$ for any $\ell<r$, using Lemma \ref{Lem} we see that the conditions of Lemma \ref{LemInd} are satisfied with $r+1$ in place of $r$. Moreover, with $t_j=2\pi a/m$, $a\not=0$ we have $\cH_{N,r+1,s}(x,t_j)\equiv0$ for any $q\leq r+1$ and $s=2,3,4$ while $\cH_{N,q,w}(x,t_j)\equiv0$ for any $q\leq r$ and $w=1,2,3,4$.  Furthermore, when $N\geq r$ we have
\begin{eqnarray*}
\cH_{N,r+1,1}(x;t_j)=\frac{i^{r}H_{r}(x)\big(\phi_{X''}(2\pi a/m)\big)^{N-r}\Phi_{r}^{(r)}(2\pi a/m)}{r!}\\=(i)^{r}H_{r}(x)\big(\Phi_{X'}'(2\pi a/m)\big)^r.
\end{eqnarray*}
We conclude that 
$$ \bbP(S_N=k)$$
$$=\frac{e^{-k_N^2/2}}{\sqrt{2\pi}} \left[\cE_{r+1}(k_N)+
\frac{i^r}{\sig_N^{r+1}}
\sum_{a=1}^{m-1} e^{-2\pi i ak/m} \left(\phi_{X'}'\left(\frac{2\pi a}{m}\right)
\right)^r H_{r}(k_N) \right]
$$
$$
+o(\sig_N^{-r-1})
$$
where $\cE_{r+1}$ the Edgeworth polynomial (i.e. the contribution of 0)
and $H_{r}(x)$ is the Hermite polynomial.

Observe that since the uniform distribution on $\bbZ/m\bbZ$ is shift invariant,
$S_N$ are uniformly distributed mod $m$ for all $N\in \bbN.$ This shows that
for $r\geq 1$, one can not characterize Edgeworth expansions just in term of
the distributions of $S_N$ mod $m$, so the additional assumptions in Theorems
\ref{Thm SLLT VS Ege} and \ref{Thm Stable Cond} are necessary.

Next, consider a more general case where for each $n$, $X_n$ equals in law to either $X'$ or $X''$,
however, now we assume that $X'$ appears infinitely often. In this case $S_N$ obeys 
Edgeworth expansions of all orders since for large $N$, 
$\Phi_N(t)$ has zeroes of order greater  $N$ at all points of the form 
$\frac{2\pi a}{m},$ $ a=1, \dots, m-1.$
In fact, the Edgeworth expansions hold in the superstable way since removing a finite
number of terms does not make the order of zero to fall below $r.$

\end{example}

\begin{example}\label{Eg1}
Let $\fp_n=\min(1, \frac{\theta}{n})$ and 
let $X_n$ take value $0$ with probability $\fp_n$ and values $\pm 1$ with probability
$\frac{1-\fp_n}{2}.$ In this example the only non-zero resonant point is $\pi=2\pi\times \frac{1}{2}.$
Then 
for small $\theta$ the contributions of $P_{1, b, N}$ (the only non-zero $a$ is $1$)
are significant and as a result $S_N$ does not admit the ordinary Edgeworth expansion.
Increasing $\theta$ we can make $S_N$ to admit Edgeworth expansions of higher and higher 
orders. 
 Namely we get that for large $n,$
$\DS \phi_n(\pi)=\frac{2\theta}{n}-1. $ Accordingly
$$ \ln (-\phi_n(\pi))=-\frac{2\theta}{n}+O\left(\frac{1}{n^2}\right). $$
 Now the asymptotic relation 
$$ \sum_{n=1}^N \frac{1}{n}=\ln N+\fc+O\left(\frac{1}{N}\right),$$
where $\fc$ is the Euler-Mascheroni  constant,
 implies that that there is a constant $\Gamma(\theta)$ such that
$$ \Phi_N(\pi)=\frac{(-1)^N e^{\Gamma(\theta)}}{N^{2\theta}}\left(1+O(1/N)\right).$$  
Therefore $S_N$ admits the Edgeworth expansions of order $r$ iff 
$\DS \theta>\frac{r-1}{4}.$ Moreover, if
$\DS \theta\in \left(\frac{r-2}{4}, \frac{r-1}{4}\right],$ then 
Corollary \ref{CrFirstNonEdge}
shows that
$$ \mathbb{P}(S_N=k)=\frac{e^{-k_N^2/2}}{\sqrt{2\pi}} \left[\cE_{r}(k_N)+
\frac{(-1)^{N+k} e^{\Gamma(\theta)
}}{N^{2\theta+(1/2)}}+O\left(\frac{1}{N^{2\theta+1}
}\right)\right]
$$
where $\cE_r$ is the Edgeworth polynomial of order $r.$
In particular if $\theta\in (0, 1/4)$ then using that 
\begin{equation}
\label{VarNearN}
V_N=N+O(\ln N)=N\left(1+O\left(\frac{\ln N}{N}\right)\right)
\end{equation}
 and hence 
 \begin{equation}
 \label{SDNearN}
\sigma_N=\sqrt{N}\left(1+O\left(\frac{\ln N}{N}\right)\right) 
\end{equation}
we conclude that 
$$ \mathbb{P}(S_N=k)=\frac{e^{-k^2/(2N)}}{\sqrt{2\pi}} 
\left[\frac{1}{\sqrt{N}}+
\frac{(-1)^{N+k} e^{\Gamma(\theta)
}}{N^{2\theta+(1/2)}}+O\left(\frac{1}{N^{2\theta+1}}\right)\right].
$$

Next, take $\fp_n=\min\left(1, \frac{\theta}{n^2}\right)$. 
Then
the  SLLT does not hold, since the Prokhorov condition fails. 
Instead we have (\ref{r=1}) with $\cR=\{\pi\}$. Namely, uniformly in $k$ we have
\begin{equation*}
\sqrt 2\pi\bbP(S_N=k)=\left(1
+(-1)^k\prod_{u=1}^{N}\left(2\fp_u-1\right)\right)\sig_{N}^{-1}e^{-k^2/2V_N}+o(\sigma_N^{-1}).
\end{equation*}
Next, $\fp_u$ is summable and moreover
\[
\prod_{u=1}^{N}(2\fp_u-1)=(-1)^N U(1+O(1/N))
\]
where $\DS U=\prod_{n=1}^{\infty}(1-2\fp_u)$. We conclude that 
\begin{equation}
\label{A1Ex3}
\sqrt 2\pi\bbP(S_N=k)=\left(1+(-1)^{k+N}U\right)\sig_{N}^{-1}e^{-k^2/2V_N}+
O\left(\sigma_N^{-2}\right)
\end{equation}
uniformly in $k$. In this case the usual LLT holds true if and only if $U=0$
in agreement with Proposition \ref{PrLLT-SLLT}.

 In fact, in this case we have a faster rate of convergence.
To see this we consider expansions of order $2$ 
for $\fp_n$ as above. 
  We observe that $q_m(2)=\fp_n$ for large $n.$ Thus
  \[
|\bbE(e^{\pi iX_n})|=1-2 \fp_n
\] 
and so $|\bbE(e^{\pi X_n})|\geq \frac12$ when $n\geq N_\te$ for some minimal $N_\te$.
Therefore, we can take $N_0=N_\te$.
Note also that we have $Y_n=X_n\text{ mod }2-1$. We conclude that for $n>N_0$ we have
\[
a_{n}=a_{n,j}=\frac{\bbE[((-1)^{Y_n}-1)X_n]}{\bbE[(-1)^{Y_n}]}=0
\]
and so the term $C_{1,N}$ vanishes. Next, we observe that 
\[
\gamma_{N,j}=\frac{\sum_{n=N_0+1}^{N}\bbE(X_n^3)}{\sum_{n=N_0+1}^{N}(1-\fp_n)}=0.
\]
Finally, we note that $\bbE[(-1)^{X_n}X_n]=0$, and hence $\Phi_{N_0}'(\pi)=0$. Therefore, the second term in (\ref{r=2'}) vanishes and we have
\begin{equation*}
\sqrt 2\pi\bbP(S_N=k)=\left(1
+(-1)^{k+N} U
\right)\sig_{N}^{-1}e^{-k^2/(2V_N)}+
 O\left(\sigma_N^{-3}\right).
\end{equation*}
Taking into account \eqref{VarNearN} and \eqref{SDNearN} we obtain
$$ \sqrt 2\pi\bbP(S_N=k)=\frac{1
+(-1)^{k+N} U}{\sqrt{N}} \; e^{-k^2/(2N)}+
O\left(N^{-3/2}\right). $$
In particular, \eqref{A1Ex3} holds with the stronger rate $O\left(\sigma_N^{-3}\right)$.
\end{example}

\begin{example}\label{NonSym r=2 Eg}
 The last example exhibited significant simplifications. Namely, there was only one resonant
point, and, in addition, the second term vanished due to the symmetry.
We now show how a similar analysis could be performed when the above simplifications
are not present.
Let us assume that $X_n$ takes the values $-1,0$ and $3$ with probabilities $a_n,b_n$ and $c_n$ so that $a_n+b_n+c_n=1$. Let us also assume that
$ b_n<\frac18$ and that $a_n,c_n\geq \rho>0$ for some constant $\rho$. Then 
$$V(X_n)=9(c_n-c_n^2)+6a_nc_n+(a_n-a_n^2)\geq 6 \rho^2$$
\noindent
and so $V_N$ grows linearly fast in $N$.

Next, since we can take $K=3$, the denominators $m$ of the nonzero resonant points can only be $2,3,4,5$ or $6$. An easy check shows that for $m=3,5,6$ we have $q_n(m)\geq \rho$, and that for $m=2,4$ we have $q_n(m)=b_n$. Therefore, for $m=3,5,6$ we have $M_N(m)\geq \rho N$, and so we can disregard all the nonzero resonant points except for $\pi/2,\pi$ and $3\pi/2$. For the latter points we have
\begin{equation}
\label{Res1}
\phi_n\left(\frac{\pi}{2}\right)=b_n-i(1-b_n),
\end{equation}
\begin{equation}
\label{Res2}
 \phi_n(\pi)=2b_n-1,\quad 
\phi_n\left(\frac{3\pi}{2}\right)=b_n+i(1-b_n).
\end{equation}
Hence, denoting $\eta_n=b_n(1-b_n),$ we have
\begin{equation*}
\left|\phi_n\left(\frac{\pi}{2}\right)\right|^2=\left|\phi_n\left(\frac{3\pi}{2}\right)\right|^2=1-2\eta_n,
\quad
 \big|\phi_n(\pi)\big|^2=1-4\eta_n.
\end{equation*}
 Since we suppose that $\eta_n\leq b_n<\frac{1}{8}$ it follows that
$1-4\eta_n\geq \frac12$. Then for the above three resonant points we can take $N_0=0$. 
 Now Proposition~\ref{Thm} and a simple calculation show 
that
for any $r$ we get the Edgeworth expansions of order $r$ if and only if 
\[
\prod_{n=1}^N(1-2\eta_n)= o\left(N^{1-r}\right).
\]
Let us focus for the moment on the case when $b_n=\gamma/n$ for $n$ large enough
 where $\gamma>0$ is a constant. 
  Rewriting \eqref{Res1}, \eqref{Res2} as
\begin{equation}
\label{PhiNRatio}
 \frac{\phi_n\left(\frac{\pi}{2}\right)}{-i}=(1-b_n)+ib_n, \quad
\frac{\phi_n\left(\frac{3\pi}{2}\right)}{i}=(1-b_n)-ib_n, \quad
 \end{equation} 
 $$ -\phi_n(\pi)=1-2 b_n $$
 and,  using that the condition $b_n<\frac{1}{8}$ implies that 
 that $\phi_n(t)\neq 0$ for
 all $n\in \mathbb{N}$ and all $t\in \left\{\frac{\pi}{2}, \pi, \frac{3\pi}{2}\right\}$,
we conclude similarly to Example \ref{Eg1} that there are non-zero complex numbers
$\kappa_1, \kappa_3$ and a non-zero real number $\kappa_2$ such that
$$ \Phi_N\left(\frac{\pi}{2}\right)=\frac{(-i)^N \kappa_1}{N^\gamma} e^{i \gamma \ln N
}
\left(1+O\left(\frac1N\right)\right), $$
$$ \Phi_N\left(\frac{3\pi}{2}\right)=\frac{i^N \kappa_3}{N^\gamma} 
e^{-i \gamma \ln N
}
\left(1+O\left(\frac1N\right)\right), $$
$$ \Phi_N(\pi)=(-1)^N \frac{\kappa_2e^{2\gamma w_N}}{N^{2\gamma}}
\left(1+O\left(\frac1N\right)\right). $$
It follows that $S_N$ admits Edgeworth expansion of order $r$ iff
$\gamma>\frac{r-1}{2}.$
In fact if 
$\frac{r-1}{2}<\gamma\leq \frac{r}{2}$ then
Corollary \ref{CrFirstNonEdge}
shows that
$$ \mathbb{P}(S_N=k)=
\frac{e^{-k_N^2/2}}{\sqrt{2\pi}} \Bigg[\cE_{r}(k_N)+
\frac{\kappa_1  e^{i\gamma \ln N}}
{N^\gamma \sig_N}
+
\frac{\kappa_3  e^{-i\gamma \ln N}}
{N^\gamma \sig_N}
+O\left(N^{-\eta}\right)\Bigg]
$$
where $\cE_r$ is the Edgeworth polynomial of order $r$ and
$\eta\!\!=\!\!\min\left(2\gamma, \frac{r}{2}\right)+\frac{1}{2}.$

To give a specific example, let us suppose that $\frac{1}{2}\leq \gamma<1$ and that
$E(X_n)=0$ which means that
\begin{equation}
\label{Ex4-A-C}
a_n=\frac{3(1-b_n)}{4}, \quad c_n=\frac{1-b_n}{4}. 
\end{equation}
Then 
\begin{equation}
\label{Ex4M2M3}
 V_N=3N-3\gamma \ln N+O(1), \quad
E(S_N^3)=6N-6\gamma \ln N+O(1), 
\end{equation}
so Proposition \ref{2 Prop} gives
$$ \sqrt{2\pi} \mathbb{P}(S_N=k)=$$
$$e^{-k^2/6 N}
\left[\frac{1}{\sqrt{3N}}
\left(1+\frac{\kappa_1 i^{k-N} e^{i\gamma \ln N}
+\kappa_3 i^{N-k} e^{-i\gamma \ln N}}
{N^\gamma}\right)-
 \frac{k^3}{81\sqrt{3 N^5}} 
\right]
$$
$$
+O\left(N^{-3/2}\right)
$$


Next, let us provide the second order trigonometric expansions under the sole assumption that $1-4\eta_n\geq \frac12$ and $a_n,c_n\geq \rho$. As we have mentioned, we only need to consider the nonzero resonant points $\pi/2,\pi,3\pi/2$ and for these points we have $N_0=0$. Therefore, the term involving the derivative in the right hand side of (\ref{r=2'}) vanishes. 
Now, a direct calculation shows that 
\[
C_{1,N,\pi}=\sum_{n=1}^{N}\frac{\bbE(e^{i\pi X_n}\bar X_n)}{\bbE(e^{i\pi X_n})}=2\sum_{n=1}^{N}\frac{(a_n-3c_n)b_n}{2b_n-1}
\]
and 
$$
C_{1,N,\pi/2}=\sum_{n=1}^N\frac{(a_n-3c_n)(1+i)b_n}{b_n-i(1-b_n)},\quad
C_{1,N,3\pi/2}=\sum_{n=1}^N\frac{(a_n-3c_n)(1-i)b_n}{b_n+i(1-b_n)}.
$$
Note that $3c_n-a_n=\bbE(X_n)$.
Set
$$
\Gamma_{1,N}=\prod_{n=1}^{N}(b_n-i(1-b_n)), \quad 
\Gamma_{2,N}=\prod_{n=1}^N(2b_n-1), \quad
\Gamma_{3,N}=\prod_{n=1}^{N}(b_n+i(1-b_n)).
$$
Then $\Gamma_{s,N}=\bbE(e^{\frac{s\pi i}{2} S_N})$.
We also set 
$$
\Theta_{s,N}=C_{1,N,s\pi/2}\Gamma_{s,N},\,s=1,2,3
$$
and 
$$\Gamma_{N}(k)=\sum_{j=1}^{3}e^{-j\pi i k/2}\Gamma_{j,N},
\quad \Theta_{N}(k)=\sum_{j=1}^{3}e^{-j\pi i k/2}\Theta_{j,N}.$$
Then by Proposition \ref{2 Prop} and Remark \ref{Alter 2nd Order}, uniformly in $k$ we have
\begin{equation}\label{EgGen}
\sqrt{2\pi}P(S_N=k)=\sig_N^{-1}\left(1+\Gamma_N(k)\right)e^{-k_N^2/2}
\end{equation}
$$
-\sig_N^{-2}\left(k_N^3 T_N\big(1+\Gamma_{N}(k)\big)+ik_N\Theta_{N}(k)\right)e^{-k_N^2/2}+o(\sig_N^{-2})
$$
where $T_N=\frac{\DS \sum_{n=1}^{N} \bbE(\bar X_n^3)}{6V_N}$, $\bar X_n=X_n-\bbE(X_n)$.

Let us now consider a more specific situation. 
 Namely we suppose that  $b_n=\frac{\gamma}{n^{3/2}}$ for large $n$
and that $E(X_n)=0.$ Then \eqref{Ex4-A-C} shows that 
$C_{1, N, s\pi/2}=0.$ Next \eqref{PhiNRatio} gives
$$ \frac{\Phi_N\left(\frac{\pi}{2}\right)}{(-i)^N}= \prod_{n=1}^N \left[(1-b_n)+ib_n\right]=
\frac{\brkappa_1}{\DS \prod_{n=N+1}^\infty\left[(1-b_n)+ib_n\right]}$$
$$=
\brkappa_1 \left(1+\frac{2 \gamma(1-i)}{\sqrt{N}}+O\left(\frac{1}{N}\right)\right)
$$
where $\DS \brkappa_1=\prod_{n=1}^\infty \left[(1-b_n)+ib_n\right].$
Likewise
$$ \frac{\Phi_N\left(\frac{3\pi}{2}\right)}{i^N}=
\brkappa_3 \left(1+\frac{2 \gamma(1+i)}{\sqrt{N}}+O\left(\frac{1}{N}\right)\right)
$$
and
$$
 \frac{\Phi_N\left(\pi\right)}{(-1)^N}=
\brkappa_2 \left(1+\frac{4 \gamma}{\sqrt{N}}+O\left(\frac{1}{N}\right)\right).
$$
Taking into account \eqref{Ex4M2M3} we can reduce \eqref{EgGen} to the following 
expansion
$$ \sqrt{2\pi}\bbP(S_N=k)=e^{-k^2/6N} \left[
\frac{1}{\sqrt{3N}} \left(1+\sum_{s=1}^3 \brkappa_s i^{s(k-N)} \right)\right. $$
$$\left.+\frac{1}{N}\left(
-\frac{\tilde k_N^3}{3} 
+ \sum_{s=1}^3 \brkappa_s i^{s(k-N)} 
\left(\frac{2\gamma(1-i^{-s}) }{\sqrt{3}}-\frac{\tilde k_N^3}{3} \right)\right)
\right]+O\left(\frac{1}{N^{3/2}}\right) 
$$
where $\tilde k_N=k/\sqrt{3N}$.
\end{example}

\begin{example}
Let $X'$ take value $\pm 1$ with probability $\frac{1}{2}$, $X''$ take values $0$ and $1$ with probability $\frac{1}{2}$, and $X^{\delta}$, $\del\in[0,1]$ be the mixture of $X'$ and $X''$ with
weights $\delta$ and $1-\delta.$ Thus $X^\delta$ take value $-1$ with probability 
$\frac{\delta}{2},$ the value $0$ with probability $\frac{1-\delta}{2}$ and value $1$ with probability 
$\frac{1}{2}$. Therefore, $\bbE(e^{\pi i X^\delta})=-\del$.  We suppose that $X_{2m}$ and $X_{2m-1}$ have the same law which we
call $Y_m.$ The distribution of $Y_m$ is defined as follows. Set $k_j=3^{3^j}$, and let $Y_{k_j}$ have the same distribution as $X^{\del_j}$ where $\del_j=\frac{1}{\sqrt{k_{j+1}}}$. When $m\not\in\{k_j\}$ we 
let $Y_m$ have the distribution of $X'$. It is clear that $V_N$ grows linearly fast in $N$. Note also that  $\bbE(e^{\pi i Y_m})=-\del_j$ when $m=k_j$ for some $j$, and otherwise $\bbE(e^{\pi i Y_m})=-1$. 
Now, take $N\in\bbN$ such that $N>2k_2$, and let $J_N$ be so that $2k_{J_N}\leq N<2k_{J_N+1}$. Then 
$$
|\Phi_{N}(\pi)|\leq \prod_{j=1}^{J_N}(k_{j+1})^{-1}.
$$ 
Since $k_{J_N}\leq \frac{N}2<k_{J_N+1}$ and $k_j=(k_{j+1})^{1/3}$  we have 
$k_{J_N+1}^{-1}\leq 2N^{-1}$ 
and $k_{J_N+1-m}\leq 2^{3^{-m}} N^{-3^{-m}}$ for any $0<m\leq J_N$. 
 Denote $\DS \alpha_N=\sum_{j=1}^{J_N-1}3^{-j}.$
Since $\alpha_N>1/3$ we get that
\[
|\Phi_{N}(\pi)|\leq 2^{3/2}N^{-\alpha_N}
=o(N^{-1-1/3}).
\]
Similarly, for  each $j_1, j_2\leq N$, 
\begin{equation}\label{2nd}
|\Phi_{N: j_1}(\pi)|\leq  2^{3/2} N^{-1/2-\alpha_N}=o(N^{-1/2-1/3})
\end{equation}
and 
\begin{equation}\label{3rd}
|\Phi_{N: j_1,j_2}(\pi)|\leq 2^{3/2} N^{-\alpha_N}=o(N^{-1/3}).
\end{equation}
Indeed, the largest possible values are obtained for $j_1=2k_{J_N}$ (or $j_1=2k_{J_{N+1}}-1$ if it is smaller than $N+1$) and $j_2=2k_{J_N}-1$ (or $j_2=2k_{J_{N}}$).
Using the same estimates as in the proof of of Theorem \ref{Thm Stable Cond}  we conclude from (\ref{2nd})  that
$\DS \Phi_N'(\pi)=o\left(1/\sqrt{N}\right)$ and
we conclude from \eqref{3rd} that $\DS \Phi_N''(\pi)=o(1).$
It follows from Lemma \ref{Lem} and Proposition \ref{Thm}  that $S_N$ satisfies an Edgeworth expansion of order 3. The same conclusion holds if we remove a finite number of terms from the beginning of
the sequence $\{X_n\}$ because the smallness of $\Phi_N(\pi)$ comes from the terms
$X_{2 k_{j}-1} $ and $X_{2 k_j}$ for arbitrary large $j$'s.

On the other hand 
$$ \left|\Phi_{2k_j; 2k_j, 2k_j-1, 2k_{j-1}, 2k_{j-1}-1}(\pi)\right|=
\prod_{s=2}^{j-1} \left(3^{-3^{s}}\right)$$
$$=
 3^{-(3^j-9)/2}=\frac{3^{9/2}}{\sqrt{k_j}}
\gg \frac{1}{k_j}=3^{-3^j}. $$
It follows that $S_{2k_j; 2k_j, 2k_j-1, 2k_{j-1}, 2k_{j-1}-1}$ does not obey the Edgeworth
expansion of order 3. Accordingly, stable Edgeworth expansions need not be superstable
if $r=3.$ A similar argument allows to construct examples showing that those notions are different 
for all $r>2.$
 \end{example}

\section{Extension for uniformly bounded integer-valued triangular arrays}
 In this section we will describe our results for arrays of independent random variables.
We refer to \cite{Feller}, \cite{Mu84, Mu91} and \cite{Dub}, \cite{VS}, \cite{Pel1} and \cite{DS} for results for triangular arrays of inhomogeneous Markov chains. 
Example where Markov arrays appear naturally include the theory of large deviations
for inhomogeneous systems (see \cite{SaSt91, PR08, FH} and references wherein),
random walks in random scenery \cite{CGPPS, GW17}, and statistical mechanics
\cite{LPRS}.

Let $X_n^{(N)},\,1\leq n\leq L_N$ be a triangular array such that for each  fixed $N$, the random variables $X_n^{(N)}$ are independent and integer valued. Moreover, we assume that 
$$K:=\sup_{N}\sup_{n}\|X_n\|_{L^\infty}<\infty.$$ 
For each $N$ we set $\DS S_N=\sum_{n=1}^{L_N}X_n^{(N)}$. Let $V_N=\text{Var}(S_N)$. 
We assume that $V_N\to\infty$, so that, by Lindenberg--Feller Theorem,
the sequence $(S_N-\bbE(S_N))/\sig_N$ obeys the CLT, where $\sig_N=\sqrt{V_N}$. 
 
 We say that the array  $X_n^{(N)}$ obeys the SLLT  if for any $k$ the LLT holds true for any uniformly square integrable array $Y_n^{(N)},\,1\leq n\leq L_N$, so that $Y_n^{(N)}=X_n^{(N)}$ for all but $k$ indexes $n$.
Set
$$
M_N:=\min_{2\leq h\leq 2K}\sum_{n=1}^{L_N}P(X_n\neq m_n^{(N)}(h) \text{ mod } h)\geq R\ln V_N
$$
where $m_n^{(N)}(h)$ is the most likely value of $X_n^{(N)}$ modulo $h$.
Observe now that  the proofs of Proposition \ref{PropEdg} and Lemmas  \ref{Step1}, \ref{Step2}, \ref{Step3} and \ref{Step4} proceed exactly the same for arrays. Therefore, all the arguments in the proof of Theorem \ref{IntIndThm} proceed the same for arrays instead of a fixed sequence $X_n$. That is, we have
\begin{theorem}\label{IntIndThmAr}
There $\exists J=J(K)<\infty$ and polynomials $P_{a, b, N}$ with degrees depending only on $a$ and $b$, whose coefficients are uniformly bounded in $N$ such that, for any $r\geq1$ uniformly in $k\in\bbZ$ we have
$$\bbP(S_N=k)-\sum_{a=0}^{J-1} \sum_{b=1}^r \frac{P_{a, b, N} ((k-a_N)/\sigma_N)}{\sigma_N^b}
\fg((k-a_N)/\sigma_N) e^{2\pi i a k/J} =o(\sigma_N^{-r})
$$
where $a_N=\bbE(S_N)$ and $\fg(u)=\frac{1}{\sqrt{2\pi}} e^{-u^2/2}. $

Moreover, $P_{0,1,N}\equiv1$ and 
given $K, r$, there exists $R=R(K,r)$ such that if 
$M_N\geq R \ln V_N$ then we can choose $P_{a, b, N}=0$ for $a\neq 0.$ 
\end{theorem}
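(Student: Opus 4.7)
The plan is to show that the proof of Theorem \ref{IntIndThm} transfers verbatim to arrays, because every ingredient used in Sections \ref{ScEdgeLogProkh} and \ref{ScGEE} depends only on (i) row-wise independence at each fixed $N$ and (ii) the uniform $L^\infty$ bound $K$, both of which hold in the array setting. No property of a single infinite sequence (no diagonal consistency, no shift invariance) is ever invoked.

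First I would set up the Fourier inversion identity $2\pi\bbP(S_N=k)=\int_0^{2\pi}e^{-itk}\Phi_N(t)\,dt$ with $\Phi_N(t)=\prod_{n=1}^{L_N}\bbE(e^{itX_n^{(N)}})$, and partition $[0,2\pi)$ into small intervals $I_j$ each containing at most one resonant point. Since the resonant set $\cR$ depends only on $K$, it is identical for arrays. On non-resonant intervals, Lemma \ref{Step1} applies unchanged and gives $\int_{I_j}|\Phi_N|\,dt\leq Ce^{-cV_N}$. For resonant intervals around $t_j=2\pi l/m$ with $M_N(m)\geq R(r,K)\ln V_N$, Lemma \ref{Step2} yields the bound $e^{-c_0 M_N(m)}=o(\sigma_N^{-r})$, absorbing these contributions into the error; this is why one can take $P_{a,b,N}=0$ for $a\neq 0$ under the hypothesis $M_N\geq R\ln V_N$. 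For the remaining ``bad'' resonant intervals, relabel the indices $1,\dots,L_N$ (at fixed $N$) so that $q_n(m)$ is non-increasing, let $N_0=N_0(N,t_j)$ be the number of $n$'s with $q_n(m)\geq\breps=1/(8K)$, and factor $\Phi_N=\Phi_{N_0}\,\Phi_{N_0,L_N}$, noting that $N_0=O(\ln V_N)$ exactly as in the sequence case.

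Next I would apply Lemma \ref{Step3} to obtain the factorization $\Phi_{N_0,L_N}(t_j+h)=\Phi_{N_0,L_N}(t_j)\Phi_{N_0,L_N}(h)\Psi_{N_0,L_N}(h)$ with $\Psi$ controlled by $M_N(m)$, and Lemma \ref{Step4} to Taylor-expand $\Phi_{N_0}(t_j+h)$ to a large order $L$ with remainder $O((h\ln V_N)^{L+1})$. Proposition \ref{PropEdg}, whose proof uses only partitioning into blocks of bounded variance and the moment bound \eqref{CenterMoments} for independent bounded variables, provides the Edgeworth-type expansion of $\Phi_{N_0,L_N}(h)$ near $0$. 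Changing variables $h\mapsto h/\sigma_{N_0,L_N}$, applying the Fourier identity \eqref{Fourir}, and performing the transitions from $k_{N_0,L_N}$ to $k_N$ and from $e^{-k_{N_0,L_N}^2/2}$ to $e^{-k_N^2/2}$ word-for-word as in \S\ref{Fin}, one arrives at the claimed expansion with polynomials $P_{a,b,N}$ indexed by $a\in\{0,\dots,J(K)-1\}$, where $J(K)$ is the least common multiple of denominators of resonant points. The identity $P_{0,1,N}\equiv 1$ is again immediate from the $t_j=0$ term, which is the classical Edgeworth expansion with leading factor $\fg$.

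The only point that truly needs checking, rather than an obstacle in any meaningful sense, is the uniformity of all constants with respect to $N$. One must verify that every quantity appearing in the preceding auxiliary results ($R(r,K)$, $\breps$, $c_0$, the moment constants $C_p$ of \eqref{CenterMoments}, $J(K)$, and the exponents $q_0$ in the coefficient bound $O((1+M_N^{q_0})e^{-c_0 M_N(m)})$ for $a\neq 0$) depends only on $K$ and $r$. Inspection of the original proofs confirms this: each estimate is derived either for a single random variable (Lemmas \ref{Step2}, \ref{Step3}, Proposition \ref{PrHalf}) or for a finite product/sum whose size is dictated by $V_N$ and $K$ alone (Proposition \ref{PropEdg}, Lemmas \ref{Step1}, \ref{Step4}). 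Consequently the entire argument, including the coefficient bound $O((1+M_N^{q_0})e^{-c_0 M_N(m)})$ that justifies absorbing the $a\neq 0$ terms when $M_N\geq R(K,r)\ln V_N$, transfers without modification to triangular arrays, completing the proof.
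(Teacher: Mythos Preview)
Your proposal is correct and follows exactly the approach the paper takes: the paper simply observes that Proposition \ref{PropEdg} and Lemmas \ref{Step1}--\ref{Step4} go through unchanged for arrays, so the proof of Theorem \ref{IntIndThm} carries over verbatim. Your write-up is in fact considerably more detailed than the paper's one-sentence justification, and your verification that all constants depend only on $K$ and $r$ (not on any along-the-sequence structure) is precisely the point that makes the transfer legitimate.
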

All the formulas for the coefficients of the polynomials $P_{a,b,N}$ remain the same in the arrays setup.
 In particular, we get that, uniformly in $k$ we have
\begin{equation}\label{r=1 Ar}
\bbP(S_N=k)=\left(1+\sum_{t\in\cR}e^{-itk}\Phi_N(t)\right)e^{-k_N^2/2}\sig_N^{-1}+o(\sig_N^{-1})
\end{equation}
where $\Phi_N(t)=\bbE(e^{it S_N})$.

Next, our version for Proposition \ref{PrLLT-SLLT} for arrays is as follows.
\begin{proposition}
\label{PrLLT-SLLT Ar}
Suppose $S_N$ obeys LLT. Then for each  integer $h\geq2$,  at least one of the following conditions occur:
\vskip0.2cm
either (a) $\DS \lim_{N\to\infty}\sum_{n=1}^{L_{N}} \bbP(X_n\neq m_n^{(N)}(h) \text{ mod } h)=\infty$.
\vskip0.2cm
or (b) there exists a subsequence $N_k$, numbers $s\in\bbN$ and $\ve_0>0$ and  indexes $1\leq j_1^{k},...,j_{s_k}^{k}\leq L_{N_k}$, $s_k\leq s$  so that the distribution of  
$\DS \sum_{u=1}^{s_k}X^{(N_k)}_{j_u}$ converges to uniform $\text{mod }h$, and the distance between the distribution of $\DS S_{N_k}-\sum_{q=1}^{s_k}X^{(N_k)}_{j_q}$ and the uniform distribution $\text{mod }h$ is at least $\ve_0$.
\end{proposition}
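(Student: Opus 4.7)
The plan is to adapt the proof of Proposition \ref{PrLLT-SLLT}, with the essential adjustment being a double passage to subsequences, since in the array setting there is no single fixed sequence from which to extract the ``bad'' indices once and for all. First I would fix $h\geq 2$ and assume (a) fails, so that along some subsequence (still denoted $N_k$) one has $\sum_{n=1}^{L_{N_k}} p_n^{(N_k)}(h)\leq C$, where I abbreviate $p_n^{(N)}(h)=\bbP(X_n^{(N)}\not\equiv m_n^{(N)}(h)\text{ mod }h)$. Set $J_k=\{n\leq L_{N_k}: p_n^{(N_k)}(h)\geq 1/4\}$; Markov's inequality gives $|J_k|\leq 4C$, so by a further subsequence extraction I may assume $|J_k|=s$ for some fixed integer $s$, and enumerate $J_k=\{j_1^k,\dots,j_s^k\}$. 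For a nonzero resonant point $t=2\pi l/h$, the Fourier identity used in the proof of Proposition \ref{PrLLT-SLLT} yields $|\phi_n^{(N_k)}(t)|\geq 1-2 p_n^{(N_k)}(h)$; combined with the elementary bound $1-2x\geq e^{-4x}$ on $[0,1/4]$, the product over the complement of $J_k$ will satisfy
\[
\Bigl|\prod_{n\not\in J_k}\phi_n^{(N_k)}(t)\Bigr|\geq e^{-4C}>0.
\]

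Next I would invoke the LLT hypothesis. Since the argument of Theorem \ref{ThLLT} carries over to arrays through (\ref{r=1 Ar}) and Lemma \ref{Lemma} (in exactly the same manner as Corollary \ref{FirstOrCor}), LLT forces $\Phi_{N_k}(t)\to 0$ for every nonzero resonant $t$. Factoring $\Phi_{N_k}(t)$ into its $J_k$-part and its complementary part and dividing through by the strictly positive lower bound just obtained, I would conclude that $\bbE\bigl(e^{it\sum_{u=1}^s X^{(N_k)}_{j_u^k}}\bigr)\to 0$ for every $t=2\pi l/h$ with $1\leq l\leq h-1$; Lemma \ref{LmUnifFourier} will then yield convergence of $\sum_{u=1}^s X^{(N_k)}_{j_u^k}$ modulo $h$ to the uniform distribution. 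Symmetrically, the same lower bound $e^{-4C}$ on the characteristic function of the complementary sum $S_{N_k}-\sum_u X^{(N_k)}_{j_u^k}$ at every nonzero character of $\bbZ/h\bbZ$, fed back into Lemma \ref{LmUnifFourier}, will keep the distribution of that complementary sum uniformly bounded away from uniform by some $\ve_0=\ve_0(C,h)>0$, yielding (b).

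The main step to handle carefully is the iterated subsequence extraction, and in particular the observation that $|J_k|$ is uniformly bounded in $k$, which lets one stabilize its cardinality by a further extraction; this is the only substantive difference from the sequence setting. The threshold $1/4$ in the definition of $J_k$ is chosen purely for the convenience of the inequality $1-2x\geq e^{-4x}$, but any small enough positive threshold would work equally well. Apart from these adjustments, no ingredient beyond the proof of Proposition \ref{PrLLT-SLLT} will be needed.
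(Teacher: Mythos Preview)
Your proof is correct and follows essentially the same approach as the paper's: both assume (a) fails along a subsequence, isolate the boundedly many indices where the non-modal probability exceeds a fixed threshold, use the bound $|\phi_n(t)|\geq 1-2\,\bbP(X_n\not\equiv m_n)$ together with $\Phi_{N_k}(t)\to 0$ from the LLT to force the product over those indices to vanish, and then invoke Lemma~\ref{LmUnifFourier} for both conclusions in (b). The only cosmetic differences are your choice of threshold $1/4$ versus the paper's $1/(8h)$, your use of $p_n$ directly rather than the second-largest residue probability $q_n$, and your extra subsequence pass to stabilize $|J_k|$ (the paper is content with $s_k\leq 8hC$).
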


\begin{proof}
First, by \eqref{r=1 Ar} and Lemma \ref{Lemma} if the LLT holds 
then for any nonzero resonant point $t$ we have $\DS \lim_{N\to\infty}|\Phi_N(t)|=0$. Now, if (a) does not hold true then there is a subsequence $N_k$ so that 
$\DS \sum_{n=1}^{L_{N_k}}q(X_n^{(N_k)},h)\leq C$, where $C$ is some constant. Set 
$\DS q_n^{(N_k)}(h)=q\left(X_n^{(N_k)},h\right)$. Then there are at most $8hC$ $n$'s  between $1$ and $L_{N_k}$ so that $q_n^{(N_k)}(h)>\frac1{8 h}$. Let us denote these $n$'s by $n_{1,k},...,n_{s_k,k}$, $s_k\leq 8h C$. Next, for any $n$ and a nonzero resonant point $t=2\pi l/h$ we have 
\begin{equation}
\label{PhiQArr}
  |\phi_n^{(N_k)}(t)|\geq 1-2hq_n^{(N_k)}(h) 
 \geq e^{-2\gamma h q_n^{(N_k)}}
\end{equation}
where $\phi_n^{(N_k)}$ is the characteristic function of $X_n^{(N_k)}$
 and $\gamma$ is such that for $\theta\in [0, 1/4]$ we have
$1-\theta\geq e^{-\gamma \theta}.$
 We thus get that 
\begin{equation}\label{C1.}
\prod_{n\not\in\{n_{u,k}\}}|\phi_{n}^{(N_k)}(t)|\geq\prod_{n\not\in\{n_{u,k}\}}(1-2hq_n^{(N_k)}(h))\geq C_0
\end{equation}
where $C_0>0$ is some constant.
Therefore,
$$
|\Phi_{N_k}(t)|\geq \prod_{u=1}^{s_k}|\phi_{n_{u,k}}^{(N_k)}(t)|\cdot C_0
$$
and so we must have
\begin{equation}\label{C2.}
\lim_{k\to\infty}\prod_{u=1}^{s_k}|\phi_{n_{u,k}}^{(N_k)}(t)|=0.
\end{equation}
Now (b) follows from \eqref{C1.}, \eqref{C2.} and Lemma \ref{LmUnifFourier}.
\end{proof}

Using (\ref{r=1 Ar})   we can now prove a version of Theorem \ref{ThProkhorov} for arrays.
\begin{theorem}\label{ThProkhorovAr}
The SLLT holds iff
for each  integer $h>1$,
\begin{equation}\label{Prokhorov Ar}
\lim_{N\to\infty}\sum_{n=1}^{L_N} \bbP(X_n^{(N)}\neq m_n \text{ mod } h)=\infty 
\end{equation}
where $m_n=m_n^{(N)}(h)$ is the most likely residue of $X_n^{(N)}$ modulo $h$.
\end{theorem}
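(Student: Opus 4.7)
My plan is to handle the two implications separately, in both cases leveraging the array analogue of the first-order expansion \eqref{r=1 Ar} together with the exponential characteristic-function estimates from Lemmas \ref{Step1} and \ref{Step2} (whose proofs transfer verbatim to uniformly bounded arrays).

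\textbf{Sufficiency.} Assume \eqref{Prokhorov Ar} and fix $k\in\bbN$ together with a uniformly square integrable modification $Y_n^{(N)}$ such that $Y_n^{(N)}=X_n^{(N)}$ outside a set $A_N$ with $|A_N|\leq k$. Writing $\widetilde S_N=S_N^{(A_N)}+T_N$ with $S_N^{(A_N)}=\sum_{n\notin A_N}X_n^{(N)}$ and $T_N=\sum_{n\in A_N}Y_n^{(N)}$, I note that the reduced Prokhorov sums satisfy $M_N^{(A_N)}(h)\geq M_N(h)-k\to\infty$ for every $h\leq 2K$, uniformly in the choice of $A_N$. Hence by the array forms of Lemmas \ref{Step1} and \ref{Step2}, $|\Phi_N^{(A_N)}(t)|\to 0$ for every nonzero resonant $t$, uniformly in $A_N$, and combining this with the array analogue of \eqref{r=1 Ar} yields the LLT for $S_N^{(A_N)}$ uniformly in $A_N$. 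Since $T_N$ is a sum of at most $k$ uniformly $L^2$ variables, $\|T_N\|_{L^2}$ is uniformly bounded; truncating $T_N^{\mathrm{tr}}=T_N\,\bbI(|T_N|\leq\sig_N^{1/2+\ve})$ (the discarded event has probability $o(\sig_N^{-1})$ by Markov) and conditioning on the realizations of $Y_n^{(N)}$ for $n\in A_N$ reduces the LLT for $\widetilde S_N$ to the uniform LLT for $S_N^{(A_N)}$, via exactly the calculation used in the proof of part (1) of Theorem \ref{Thm Stable Cond}.

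\textbf{Necessity.} Assume SLLT and suppose for contradiction that \eqref{Prokhorov Ar} fails for some integer $h\geq 2$, so that $\sum_{n=1}^{L_{N_k}}q_n^{(N_k)}(h)\leq C$ along a subsequence $N_k$. Replicating the first part of the proof of Proposition \ref{PrLLT-SLLT Ar}, I identify at most $s:=\lceil 8hC\rceil$ indices $n_{1,k},\dots,n_{s_k,k}$ (with $s_k\leq s$ for all $k$) satisfying $q_{n_{u,k}}^{(N_k)}(h)>1/(8h)$; applying the lower bound \eqref{PhiQArr} to all remaining $n$ gives
\[
\prod_{n\notin\{n_{u,k}\}}\bigl|\phi_n^{(N_k)}(2\pi/h)\bigr|\geq C_0>0.
\]
I then define a modified array by setting $Y_{n_{u,k}}^{(N_k)}\equiv 0$ for $u=1,\dots,s_k$ along the subsequence, and $Y_n^{(N)}=X_n^{(N)}$ otherwise. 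This changes at most $s$ indices per $N$ and stays uniformly bounded by $K$, so by SLLT the sum $\widetilde S_N=\sum_n Y_n^{(N)}$ obeys the LLT. By the array analogue of \eqref{r=1 Ar} together with Lemma \ref{Lemma}, this forces $|\widetilde\Phi_{N_k}(2\pi/h)|\to 0$; but by construction $|\widetilde\Phi_{N_k}(2\pi/h)|=\prod_{n\notin\{n_{u,k}\}}|\phi_n^{(N_k)}(2\pi/h)|\geq C_0$, a contradiction.

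\textbf{Main obstacle.} The subtler direction is sufficiency: since the modifications $Y_n^{(N)}$ are only uniformly $L^2$ rather than uniformly $L^\infty$, the array machinery of Sections \ref{ScEdgeLogProkh} and \ref{ScGEE} cannot be applied directly to $\widetilde S_N$. Instead one must isolate the uniformly bounded ``core'' $S_N^{(A_N)}$, derive for it a version of the LLT that is uniform in the choice of removed indices, and then transfer it to $\widetilde S_N$ by the truncation-and-conditioning argument of Theorem \ref{Thm Stable Cond}(1).
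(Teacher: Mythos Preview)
Your proof is correct and follows essentially the same strategy as the paper's: for sufficiency you isolate the uniformly bounded core $S_N^{(A_N)}$, use the Rozanov-type bound (the array version of \eqref{Roz0}) together with $M_N^{(A_N)}(h)\geq M_N(h)-k\to\infty$ to obtain the LLT for the core uniformly in $A_N$, and then transfer it to the full modified sum via the truncation-and-conditioning argument of Theorem~\ref{Thm Stable Cond}(1); for necessity you extract the finitely many indices with $q_n^{(N_k)}(h)>1/(8h)$, replace them by $0$, and use \eqref{PhiQArr} to produce a uniform lower bound on $|\widetilde\Phi_{N_k}(2\pi/h)|$ contradicting the LLT for the modified array. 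The paper's proof organizes sufficiency slightly differently---it first records that $\bbE\bigl(e^{it\sum Y_n^{(N)}}\bigr)\to 0$ for every nonzero resonant $t$ (using independence and $|\bbE(e^{itT_N})|\leq 1$) and then asserts that the first-order expansion \eqref{r=1 Ar} persists for the modified array ``arguing as in the proof of Theorem~\ref{Thm Stable Cond}(1)''---but this amounts to the same computation you spell out. One small point worth making explicit: the failure of \eqref{Prokhorov Ar} can only occur for $h\leq 2K$ (for larger $h$ all values of $X_n^{(N)}$ are distinct mod $h$, so $\sum_n q_n^{(N)}(h)$ dominates a constant multiple of $V_N$), which guarantees that your test frequency $2\pi/h$ is indeed a resonant point.
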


\begin{proof}
First, the arguments in the proof of (\ref{Roz0}) show that there are constants $c_0,C>0$ so that
for any nonzero resonant point $t=2\pi l/h$ we have
\begin{equation}\label{ROZ}
|\Phi_N(t)|\leq Ce^{-c_0M_N(h)},\quad \text{where}\quad M_N(h):=\sum_{n=1}^{L_N}q(X_n^{(N)},h).
\end{equation}
Let us assume that \eqref{Prokhorov Ar} holds 
for all integers $h>1$.  Consider $s_N$--tuples
 $1\leq j_1^N,...,j_{s_N}^N\leq L_N$, where $s_N\leq \bar s$ is bounded in $N$. Then by applying \eqref{ROZ} with $\DS \tilde S_N=S_N-\sum_{l=1}^{s_N}X_{j_l^N}^{(N)}$ we have 
\begin{equation}\label{TILD}
\lim_{N\to\infty}|\bbE(e^{it\tilde S_N})|=0.
\end{equation}
Now, 
arguing as in the proof of  Theorem \ref{Thm Stable Cond}(1), given a uniformly square integrable array $Y_n^{(N)}$ as in the definition of the SLLT, we still have \eqref{r=1 Ar}, even though the new array is not necessarily uniformly bounded. Applying (\ref{TILD}) we see that for any nonzero resonant point $t$ we have 
$$
\lim_{N\to\infty}\left|\bbE\left(\exp\left[it \sum_{n=1}^{L_N}Y_n^{(N)}\right]\right)\right|=0
$$
and so $\DS S_N Y:=\sum_{n=1}^{L_N}Y_n^{(N)}$ satisfies the LLT.

Now let us assume that $M_N(h)\not\to\infty$  for some $2\leq h\leq 2K$ 
(it not difficult to see 
 that \eqref{Prokhorov Ar} holds for any $h>2K$). 

In other words after taking a subsequence we have that  
$M_{N_k}(h)\leq L$ for some $L<\infty.$  
 The proof of Proposition \ref{PrLLT-SLLT Ar} 
 shows  that 
 there $s<\infty$ such that 
 after possibly removing terms $n_{1, k}, n_{2, k},\dots ,n_{s_k, k}$ with $s_k\leq s$ 
 we can obtain that $q_n^{(N_k)}(h) \leq \frac{1}{8h},$ $n\not\in\{n_{j, k}\}$.
In this case \eqref{PhiQArr} shows that for each $\ell$
$$ |\Phi_{N_k; n_{1, k} , \dots,  n_{s_k, k}}(2\pi\ell/h)|\geq e^{-2\gamma L}. $$
By Proposition \ref{PrLLT-SLLT Ar},
$S_{N_k; n_{1, k} , \dots,  n_{s_k, k}}$ does not satisfy the LLT.
\end{proof}

Next, all the other arguments in our paper  proceed similarly for arrays since they essentially rely only on the specific structure of the polynomials from Theorem \ref{IntIndThm}.
 For the sake of completeness, let us formulate the main (remaining) results here.

\begin{theorem}\label{ThLLT Ar}
The following conditions are equivalent:

(a) $S_N$ satisfies LLT;

(b) For each $\xi\in \bbR\setminus \bbZ$, 
$\DS \lim_{N\to\infty} \bbE\left(e^{2\pi i \xi S_N}\right)=0$;

(c) For each non-zero resonant point $\xi$,
$\DS \lim_{N\to\infty} \bbE\left(e^{2\pi i \xi S_N}\right)=0$;

(d) For each integer $h$ the distribution of $S_N$ mod $h$ converges to uniform.
\end{theorem}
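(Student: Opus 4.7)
The strategy is to reduce each of the four conditions to the statement that $\Phi_N(t)\to 0$ for each nonzero resonant point $t$, using the array version of the first order expansion \eqref{r=1 Ar}, the uniqueness lemma (Lemma~\ref{Lemma}), and the Fourier characterization of closeness to the uniform distribution (Lemma~\ref{LmUnifFourier}). The text already establishes that the proofs of Proposition~\ref{PropEdg} and Lemmas~\ref{Step1}--\ref{Step4} go through unchanged for arrays, so in particular the pointwise bound $|\Phi_N(t)|\le e^{-c V_N}$ on non-resonant intervals (cf.\ \eqref{NonResDec}) is available.

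First I would handle the equivalence of (b) and (c). The implication (b)$\Rightarrow$(c) is immediate, since every nonzero resonant point $t=2\pi l/m$ with $0<m\le 2K$ corresponds to some $\xi=l/m\in\bbR\setminus\bbZ.$ For the converse, fix $\xi\in\bbR\setminus\bbZ$ and set $t=2\pi\xi\bmod 2\pi$; if $t$ is a nonzero resonant point, (c) gives the decay directly, while if $t$ is not resonant, the array analogue of \eqref{NonResDec} yields $|\Phi_N(t)|\le e^{-cV_N}\to 0$ since $V_N\to\infty$ by assumption.

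Next I would establish (a)$\Leftrightarrow$(c). For (c)$\Rightarrow$(a), I plug $\Phi_N(t)=o(1)$ for each $t\in\cR$ into the array expansion \eqref{r=1 Ar} and observe that the oscillatory sum over $\cR$ is $o(1)$ uniformly in $k$, so \eqref{r=1 Ar} reduces to the usual LLT. For (a)$\Rightarrow$(c), assuming LLT holds we subtract the LLT approximation from \eqref{r=1 Ar} and obtain
\[
\sum_{t_j\in\cR}e^{-it_j k}\Phi_N(t_j)\,\sig_N^{-1}e^{-k_N^2/2}=o(\sig_N^{-1})
\]
uniformly in $k$ with $k_N=O(1)$; restricting to such $k$ (of which there are many, since $\sig_N\to\infty$) we cancel the Gaussian factor and apply Lemma~\ref{Lemma} (in the $d=0$ case) to conclude $\Phi_N(t_j)=o(1)$ for each $t_j\in\cR$.

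Finally, (c)$\Leftrightarrow$(d) follows from Lemma~\ref{LmUnifFourier} applied to the measure $\mu_N$ on $\bbZ/h\bbZ$ defined by $\mu_N(a)=\bbP(S_N\equiv a\bmod h)$, whose Fourier coefficients are exactly $\hat\mu_N(b)=\Phi_N(2\pi b/h)$; uniform convergence of $\mu_N$ to the uniform measure on $\bbZ/h\bbZ$ for every $h$ is equivalent to $\Phi_N(2\pi b/h)\to 0$ for every $h\ge 2$ and $1\le b\le h-1$, i.e.\ at every nonzero resonant point (observing as usual that resonant frequencies with denominator $>2K$ give automatic decay via the non-resonant bound, so they are not an obstruction). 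The only real content is verifying that the array version of \eqref{r=1 Ar} and of the uniqueness argument in Lemma~\ref{Lemma} applies verbatim, which is straightforward since both results only use the structure of the expansion and not any specific property of a fixed sequence; this is the main (and essentially cosmetic) point to check.
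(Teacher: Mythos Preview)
Your proposal is correct and follows essentially the same approach as the paper: the paper's proof of Theorem~\ref{ThLLT} (which it says carries over verbatim to arrays) uses exactly the non-resonant decay \eqref{NonResDec} for (b)$\Leftrightarrow$(c), the first-order expansion together with Lemma~\ref{Lemma} (packaged as Corollary~\ref{FirstOrCor}) for (a)$\Leftrightarrow$(c), and Lemma~\ref{LmUnifFourier} for (c)$\Leftrightarrow$(d). Your additional remark that frequencies $2\pi b/h$ with $h>2K$ are handled by the non-resonant bound is a useful clarification that the paper leaves implicit.
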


\begin{theorem}
\label{ThEdgeMN Ar}
For each $r$ there is $R=R(r, K)$  such that 
the Edgeworth expansion of order $r$ holds true if $M_N\geq R\ln V_N$. In particular, $S_N$ obeys Edgeworth expansions of all orders if
$$ \lim_{N\to\infty} \frac{M_N}{\ln V_N}=\infty. $$
\end{theorem}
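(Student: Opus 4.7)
The plan is to deduce this statement directly from the array version of the generalized expansion, Theorem \ref{IntIndThmAr}, which has already been asserted to hold by inspection of the proof of Theorem \ref{IntIndThm}. Recall that Theorem \ref{IntIndThmAr} provides, for each $r$, a constant $R=R(K,r)$ and polynomials $P_{a,b,N}$ with uniformly bounded coefficients such that under the hypothesis $M_N\geq R\ln V_N$ one may take $P_{a,b,N}\equiv 0$ for all $a\neq 0$. In that case the representation reduces to
\[
\bbP(S_N=k)=\sum_{b=1}^r \frac{P_{0,b,N}(k_N)}{\sigma_N^b}\,\fg(k_N)+o(\sigma_N^{-r}),
\]
with $P_{0,1,N}\equiv 1$, which is precisely the classical Edgeworth expansion \eqref{EdgeDef} of order $r$. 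Thus the first assertion is an immediate corollary of Theorem \ref{IntIndThmAr}.

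For the second assertion, suppose $M_N/\ln V_N\to\infty$. Given any fixed $r\in\bbN$, apply the first part with the constant $R(r,K)$. Since $M_N\geq R(r,K)\ln V_N$ for all sufficiently large $N$, the Edgeworth expansion of order $r$ holds for all large $N$; because the statement is asymptotic, this is enough to conclude validity of the expansion of order $r$. As $r$ was arbitrary, the Edgeworth expansion of every order holds.

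If one prefers a self-contained argument that avoids invoking the full array version of Theorem \ref{IntIndThm}, I would instead mirror the proof of Theorem \ref{ThEdgeMN} step-for-step: start from the identity $2\pi\bbP(S_N=k)=\int_0^{2\pi}e^{-itk}\Phi_N(t)\,dt$ (where now $\Phi_N(t)=\bbE(e^{itS_N})=\prod_n\phi^{(N)}_n(t)$), partition $\bbT$ into short intervals $I_j$ as in \S\ref{SSNonRes}, and apply the array analogues of Lemmas \ref{Step1} and \ref{Step2}. The only inputs required for those lemmas are the uniform bound $K$ and the bounds on $q_n^{(N)}(h)$ encoded in $M_N(h)$; inspection shows that their proofs do not use homogeneity of the sequence in any way. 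Consequently, choosing $R(r,K)$ as in Remark \ref{R choice} (with $R(r,K)=\frac{r+1}{2c_0}$ and $c_0$ depending only on $K$), all integrals over resonant intervals other than the one containing $0$ are $o(\sigma_N^{-r})$ uniformly in $k$, as are the integrals over non-resonant intervals. It then remains to expand the integral over the interval containing $0$ using Proposition \ref{PropEdg}, whose proof is equally unchanged for arrays (it only uses \eqref{CenterMoments} and uniform $L^\infty$ bounds). The change of variables $t\to t/\sigma_N$ together with \eqref{Fourir} yields the classical Edgeworth polynomials, completing the proof.

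There is no genuine obstacle here: the main step is simply the verification that each lemma underlying Theorem \ref{ThEdgeMN} carries over verbatim to the array setting, and this has already been noted in the text preceding Theorem \ref{IntIndThmAr}. The mild bookkeeping point worth flagging is that all constants (notably $c_0$ in \eqref{Roz0} and $\delta_r$ in \eqref{FinStep.0}) depend only on $K$ and $r$, not on $N$ or on the array itself, so the choice of $R(r,K)$ is truly uniform over all arrays with the given sup norm bound $K$.
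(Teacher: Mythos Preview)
Your proposal is correct and matches the paper's own treatment: the paper does not give a separate proof of Theorem \ref{ThEdgeMN Ar}, but simply observes that the lemmas underlying Theorem \ref{ThEdgeMN} (Proposition \ref{PropEdg}, Lemmas \ref{Step1}--\ref{Step4}) carry over verbatim to arrays, so that Theorem \ref{IntIndThmAr} holds and the Edgeworth result follows immediately. Your two routes---deducing it as a corollary of Theorem \ref{IntIndThmAr}, or rerunning the proof of Theorem \ref{ThEdgeMN} with the array lemmas---are exactly the content of the paper's remark, and your note that all constants depend only on $K$ and $r$ is the one point worth making explicit.
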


\begin{theorem}\label{r Char Ar}
 For any $r\geq1$, the Edgeworth expansion of order $r$ holds if and only if for any nonzero resonant point $t$ and $0\leq\ell<r$ 
we have
\[
\bar \Phi_{N}^{(\ell)}(t)=o\left(\sig_N^{\ell+1-r}\right)
\]
where $\bar\Phi_{N}(x)=\bbE[e^{ix (S_N-\bbE(S_N))}]$.
\end{theorem}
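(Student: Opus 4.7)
The plan is to mirror the proof of Theorem \ref{r Char} from the i.i.d.-indexed case to the triangular array setting, using the array version of the generalized Edgeworth expansion (Theorem \ref{IntIndThmAr}) and the fact that the key technical lemmas depend only on the structure of the polynomials produced by the stationary-phase analysis, not on the way the summands are indexed. First I would perform the usual reduction: replace $X_n^{(N)}$ with $X_n^{(N)} - c_n^{(N)}$ for bounded integers $c_n^{(N)}$ chosen so that $\bbE(S_N) = O(1)$ (this is possible by the array analog of \cite[Lemma 3.4]{DS}); both the Edgeworth expansion and the derivative condition are invariant under such centering, since $\bar\Phi_N$ depends only on $S_N - \bbE(S_N)$.

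Next I would split into two regimes according to $M_N = \min_{2 \le h \le 2K} M_N(h)$. In the regime $M_N \geq \brR \ln\sig_N$ with $\brR$ large, Theorem \ref{ThEdgeMN Ar} gives the Edgeworth expansion for free, so I only need to check that the derivative condition $\Phi_N^{(\ell)}(t_j) = o(\sig_N^{\ell+1-r})$ holds automatically. This follows from the array analog of the decomposition $\Phi_N^{(\ell)}(t_j) = \sum_{n_1,\dots,n_k; \ell_1+\cdots+\ell_k=\ell} \gamma_{\ell_1,\dots,\ell_k} \prod_q \phi_{n_q}^{(\ell_q)}(t_j) \prod_{n \neq n_k} \phi_n(t_j)$, combined with the bound $\prod_{n \neq n_k} |\phi_n(t_j)| \leq C e^{-c_0 M_N(m) + O(1)}$ from \eqref{Roz0} (which, as noted, carries over verbatim to arrays) and the linear bound $M_N(m) \geq \hat\ve N_0$ coming from the definition of $N_0$.

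For the complementary regime $M_N(m) \le R(r, K) \ln\sig_N$ (possibly along a subsequence), I would establish the array analogs of Proposition \ref{Thm}, Lemma \ref{LemInd}, and Lemma \ref{Lem}. Each of these is a purely algebraic manipulation of the contributions $\textbf{C}_j(k)$ from \eqref{Cj(k)}, combined with the bound \eqref{expo} on $|\Phi_{N_0,N}(h)|$ near $0$ and the Taylor expansion \eqref{S4} at resonant points; none of these steps uses that the summands form a single sequence rather than a triangular array. The necessity direction then proceeds by the same induction on $r$: using Lemma \ref{Lemma} (on uniqueness of trigonometric expansions, which is a statement about complex sequences and transfers without change) one extracts, coefficient by coefficient, the relations \eqref{AEq1}, \eqref{AEq2} and deduces inductively that $\Phi_{N_0,N}(t_j)\Phi_{N_0}^{(\ell)}(t_j) = o(\sig_N^{\ell+1-r})$ for all $0 \le \ell < r$. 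Finally the array analog of Lemma \ref{Lem} converts this into the statement $\Phi_N^{(\ell)}(t_j) = o(\sig_N^{\ell+1-r})$.

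The main obstacle is really a bookkeeping one: verifying that nothing in the chain Proposition \ref{Thm} $\Rightarrow$ Lemma \ref{LemInd} $\Rightarrow$ Lemma \ref{Lem} $\Rightarrow$ Lemma \ref{Lemma} has implicitly used that the distribution of $X_n$ is fixed in $N$. The only point where the array structure could matter is in the auxiliary estimates \eqref{CenterMoments} and \eqref{Vars}, which control moments of $S_{N_0, N}$; both follow from the uniform boundedness assumption $\sup_{N,n} \|X_n^{(N)}\|_{L^\infty} \le K$ and the same multinomial argument as in the sequential case, so no extra hypothesis is needed. Once this has been checked, the necessity and sufficiency in Theorem \ref{r Char Ar} follow from the array version of Proposition \ref{Thm} combined with the two regime analysis, exactly as in Section \ref{ScCharacterization}.
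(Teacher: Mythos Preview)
Your proposal is correct and follows essentially the same approach as the paper. In fact, the paper does not give a separate proof of Theorem \ref{r Char Ar} at all: it simply states that ``all the other arguments in our paper proceed similarly for arrays since they essentially rely only on the specific structure of the polynomials from Theorem \ref{IntIndThm},'' and your outline is a careful unpacking of that sentence, tracing the chain Proposition \ref{Thm} $\Rightarrow$ Lemma \ref{LemInd} $\Rightarrow$ Lemma \ref{Lem} and verifying that the only ingredients used (the bounds \eqref{Roz0}, \eqref{expo}, \eqref{CenterMoments}, \eqref{Vars}, and the uniqueness Lemma \ref{Lemma}) depend only on the uniform bound $K$ and not on the indexing of the summands.
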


\begin{theorem}\label{Thm SLLT VS Ege Ar }
Suppose $S_N$ obeys the SLLT. Then  the following are equivalent:

(a) Edgeworth expansion of order 2 holds;

(b) $|\Phi_N(t)|=o(\sigma_N^{-1})$ for each nonzero resonant point $t$;

(c) For each $h\leq 2K$ the distribution of $S_N$ mod $h$ is
$o(\sigma_N^{-1})$ close to uniform.
\end{theorem}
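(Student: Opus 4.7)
The plan is to transcribe the sequence-case proof of Theorem \ref{Thm SLLT VS Ege} to the array setting; every ingredient used there has an array analogue in the preceding part of this section, so the argument carries over cleanly. The equivalence (b) $\Leftrightarrow$ (c) follows directly from Lemma \ref{LmUnifFourier}: for each $2\leq h\leq 2K$, let $\mu_N$ denote the law of $S_N$ mod $h$ on $\bbZ/h\bbZ$; then $\hat\mu_N(b)=\Phi_N(2\pi b/h)$ for $b=1,\ldots,h-1$, so applying the lemma with $\gamma_N=\sigma_N^{-1}$ yields that $\mu_N$ is $o(\sigma_N^{-1})$-close to uniform iff $\Phi_N(2\pi b/h)=o(\sigma_N^{-1})$ for every such $b$ and $h$. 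Since every nonzero resonant point has this form, (b) and (c) are equivalent. The direction (a) $\Rightarrow$ (b) is immediate from Theorem \ref{r Char Ar} taken at $r=2$, $\ell=0$: an Edgeworth expansion of order $2$ forces $\bar\Phi_N(t)=o(\sigma_N^{-1})$ at every nonzero resonant point, and $|\bar\Phi_N(t)|=|\Phi_N(t)|$.

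For (b) $\Rightarrow$ (a) I would invoke the array version of Proposition \ref{Thm}, which is valid because its proof only uses the lemmas of Section \ref{ScEdgeLogProkh}, each of which carries over to arrays without change. This says (a) is equivalent to
$$\sigma_N^{1-\ell}\,\Phi_{N_0,N}(t_j)\,\Phi_{N_0}^{(\ell)}(t_j)=o(1)$$
for each nonzero resonant point $t_j$ with $M_N(m_j)\leq R(2,K)\ln V_N$ and each $\ell\in\{0,1\}$. The case $\ell=0$ is precisely (b). For $\ell=1$ I would rerun the dichotomy of Section \ref{SSEdgeR=2}: decompose
$$\Phi_{N_0,N}(t_j)\,\Phi_{N_0}'(t_j)=\sum_{k=1}^{N_0}\phi_k^{(N)\prime}(t_j)\,\Phi_{N;k}^{(N)}(t_j),$$
set $\ve_N=\ln\sigma_N/\sigma_N$, and split on whether $|\phi_k^{(N)}(t_j)|\geq\ve_N$ for every $k\leq N_0$. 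If so, use $\Phi_{N;k}^{(N)}=\Phi_N/\phi_k^{(N)}$ together with $N_0=O(\ln V_N)$ and (b) to bound the sum by $C'\sigma_N|\Phi_N(t_j)|=o(1)$. Otherwise pick $k_N$ with $|\phi_{k_N}^{(N)}(t_j)|<\ve_N$; then only the term $\Phi_{N;k_N}^{(N)}(t_j)\,\phi_{k_N}^{(N)\prime}(t_j)$ is non-negligible, and the Rozanov bound \eqref{Roz0} gives $|\Phi_{N;k_N}^{(N)}(t_j)|\leq Ce^{-c_0(M_N(m_j)-1)}$, which tends to $0$ because Theorem \ref{ThProkhorovAr} and the SLLT together force $M_N(m_j)\to\infty$.

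The main obstacle is purely a matter of bookkeeping: one must check that the objects $N_0$, $\Phi_{N_0}$, $\Phi_{N_0,N}$ retain their meaning for arrays and that the key bound $N_0=O(\ln V_N)$ persists. This is automatic, since $N_0$ counts indices $n\leq L_N$ for which $q_n^{(N)}(m_j)\geq\bar\varepsilon=1/(8K)$, and this count is at most $M_N(m_j)/\bar\varepsilon=O(\ln V_N)$ in the regime under consideration. All other ingredients — Theorem \ref{IntIndThmAr}, Theorem \ref{ThProkhorovAr}, Lemma \ref{LmUnifFourier}, and the Rozanov estimate \eqref{Roz0} — are already stated or proved for arrays, so no further adaptation is required.
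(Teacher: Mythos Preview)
Your proposal is correct and follows essentially the same approach as the paper: the paper does not give a separate proof for the array version but simply states that the sequence-case arguments (Proposition \ref{Thm}, the dichotomy in \S\ref{SSEdgeR=2}, and the Rozanov bound \eqref{Roz0}) carry over verbatim, and you have faithfully transcribed exactly these steps with the appropriate array notation and verified that the key estimates ($N_0=O(\ln V_N)$, $M_N(m_j)\to\infty$ via Theorem \ref{ThProkhorovAr}) persist.
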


Next, we say that  an array $\{X_n^{(N)}\}$ 
{\em admits an Edgeworth expansion of order $r$ in a superstable way} 
(denoted by $\{X_n^{(N)}\}\in EeSs(r)$) if for each $\brs$  and each
sequence $j_1^N, j_2^N,\dots ,j_{s_N}^N$ with $s_N\leq \brs$ and $j_i^N\leq L_N$
there are polynomials $P_{b, N}$ whose coefficients are $O(1)$
in $N$ and their degrees do not depend on $N$ so that 
 uniformly in $k\in\bbZ$ we have
that
\begin{equation}\label{EdgeDefSS Ar}
\bbP(S_{N; j_1^N, j_2^N, \dots,j_{{s_N}^N}}=k)=\sum_{b=1}^r \frac{P_{b, N} (k_N)}{\sigma_N^b}
\fg(k_N)+o(\sigma_N^{-r})
\end{equation}
and the estimates in $O(1)$ and $o(\sigma_N^{-r})$ are uniform in the choice
of the tuples $j_1^N, \dots ,j_{s_N}^N.$ 

Let $\Phi_{N; j_1, j_2,\dots, j_s}(t)$ be the characteristic function of
$S_{N; j_1, j_2,\dots, j_s}.$

\begin{theorem}\label{Thm Stable Cond Ar}
(1) $S_N\in EeSs(1)$ (that is, $S_N$ satisfies the LLT 
in a superstable way) if and if it satisfies the SLLT.

(2) For arbitrary $r\geq 1$ the following conditions are equivalent:

(a) $\{X_n^{(N)}\}\in EeSs(r)$;

(b) For each $j_1^N, j_2^N,\dots ,j_{s_N}^N$ and each nonzero resonant point $t$ we have
$\Phi_{N; j_1^N, j_2^N,\dots, j_{s_N}^N}(t)=o(\sigma_N^{1-r});$

(c) For each $j_1^N, j_2^N,\dots ,j_{s_N}^N$, and each $h\leq  2K$ 
the distribution of $S_{N; j_1^N, j_2^N,\dots, j_{s_N}^N}$ mod $h$ is
$o(\sigma_N^{1-r})$ close to uniform.
\end{theorem}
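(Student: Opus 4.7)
\medskip

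\noindent\textbf{Proof proposal for Theorem \ref{Thm Stable Cond Ar}.}

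The plan is to reduce the array version to the tools already developed in the paper, following the architecture of the proof of Theorem \ref{Thm Stable Cond} and noting that every step adapts without structural change. For part (1), the easy direction is $SLLT \Longrightarrow EeSs(1)$: by Theorem \ref{ThProkhorovAr}, SLLT forces $M_N(h)\to\infty$ for each $h>1$, and then the estimate \eqref{ROZ} applied to the truncated sum
$S_{N;j_1^N,\dots,j_{s_N}^N}$ gives
\[
\bigl|\Phi_{N;j_1^N,\dots,j_{s_N}^N}(t)\bigr|\leq C e^{-c_0 M_N(h)+\bar C\brs}\to 0
\]
uniformly over all choices with $s_N\leq \brs$, so \eqref{r=1 Ar} (applied to the truncated array) yields the first order expansion uniformly. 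The converse $EeSs(1)\Longrightarrow SLLT$ is obtained by first observing that $EeSs(1)$ directly yields LLT for any modification obtained by removing finitely many terms, and then handling the case of added terms via the truncation argument from the proof of Theorem \ref{Thm Stable Cond}(1): introduce $Y_N=Y\mathbf{1}(|Y|<\sigma_N^{1/2+\ve})$, use the Markov inequality and condition on the added terms, then expand $e^{-(k-Y_N-\mathbb{E}(S_N))^2/2V_N}$ around $e^{-(k-\mathbb{E}(S_N))^2/2V_N}$, exactly as done for sequences.

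For part (2), the equivalence of (b) and (c) is an immediate consequence of Lemma \ref{LmUnifFourier} applied to the distribution of $S_{N;j_1^N,\dots,j_{s_N}^N}$ mod $h$. The real content is (a)$\Longleftrightarrow$(b). I would prove it in two steps, mirroring the sequence argument. First, the necessity of \eqref{PhiDerFM} for the array case: since each finite modification is itself an array of uniformly bounded independent integer-valued random variables to which Theorem \ref{r Char Ar} applies, one obtains for every fixed admissible tuple that
\[
\Phi^{(\ell)}_{N;j_1^N,\dots,j_{s_N}^N}(t)=o(\sigma_N^{\ell+1-r}),\qquad 0\leq\ell<r,\; t\in\cR.
\]
A uniform version over tuples with $s_N\leq\brs$ is obtained by inspecting the proof of Theorem \ref{r Char Ar}: every estimate there depends on the tuple only through the constant $K$ and through $M_N(h)$, both of which are uniform in the choice of removed terms (the latter because removing $\brs$ terms changes $M_N(h)$ by $O(\brs)$). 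In particular $(a)\Longrightarrow(b)$.

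The second step is $(b)\Longrightarrow (a)$. Here I would use exactly the factorization argument at the end of the proof of Theorem \ref{Thm Stable Cond}(2): fix $\brs$ and a tuple $(j_i^N)$, fix a resonant point $t=2\pi l/m$, let $\check S_{N}$ be the sum of those $X_n^{(N)}$ with $1\leq n\leq L_N$, $n\notin\{j_i^N\}$, and $q_n^{(N)}(m)\geq\bar\eps$, and let $\tilde S_N$ be the remaining part of $S_{N;j_1^N,\dots,j_{s_N}^N}$. The number $\check N$ of summands contributing to $\check S_N$ satisfies $\check N=O(\ln\sigma_N)$ whenever $M_N(m)\leq\brR\ln\sigma_N$ (the only case where something needs to be checked). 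Expanding $\check\Phi_N^{(\ell)}(t)\tilde\Phi_N(t)$ by the Leibniz formula as in \eqref{DerComb} and bounding each term by $|\Phi_{N;n_1,\dots,n_k,j_1^N,\dots,j_{s_N}^N}(t)|=o(\sigma_N^{1-r})$, using the hypothesis (b) applied with the extended tuple of size $s_N+k\leq \brs+\ell$, yields
\[
\bigl|\check\Phi_N^{(\ell)}(t)\tilde\Phi_N(t)\bigr|=o\bigl(\sigma_N^{1-r}\ln^\ell\sigma_N\bigr)=o(\sigma_N^{\ell+1-r})
\]
for each $\ell<r$. Combined with Lemma \ref{Lem} applied to the modified array, this delivers \eqref{PhiDerFM}, and then Theorem \ref{r Char Ar} (or Proposition \ref{Thm} applied to the modified array, together with \eqref{EdgeDefSS Ar}) provides the Edgeworth expansion of order $r$ with the required uniformity.

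The main obstacle I expect is the uniformity in the tuples: at every step one must check that constants in Proposition \ref{PropEdg}, Lemmas \ref{Step1}--\ref{Step4}, \eqref{ROZ}, and the combinatorial Leibniz expansion depend only on $K$, $r$, and $\brs$, never on the specific tuple. This is a bookkeeping task rather than a conceptual difficulty, since removing a bounded number of uniformly bounded summands changes moments, cumulants, and $V_N$ by $O(\brs)$, hence the perturbation is negligible at every scale used in the proofs.
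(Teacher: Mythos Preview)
Your proposal is correct and follows essentially the same approach as the paper: the paper does not give a separate proof for Theorem \ref{Thm Stable Cond Ar} but simply states that ``all the other arguments in our paper proceed similarly for arrays,'' and what you have written is precisely the transcription of the proof of Theorem \ref{Thm Stable Cond} to the array setting, including the truncation argument for part (1), the appeal to Lemma \ref{LmUnifFourier} for (b)$\Leftrightarrow$(c), and the Leibniz/factorization bound $\bigl|\check\Phi_N^{(\ell)}\tilde\Phi_N(t)\bigr|=o(\sigma_N^{1-r}\check N^\ell)$ for (b)$\Rightarrow$(a). Your closing remark about uniformity in the tuples is well-placed and matches the paper's implicit reliance on the fact that all constants depend only on $K$, $r$, and $\brs$.
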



\begin{thebibliography}{Bow75}
\bibliographystyle{alpha}
\itemsep=\smallskipamount


\bibitem{BR76} 
R.N. Bhattacharya, R. Ranga Rao
{\em Normal Approximation and Asymptotic Expansions,} Wiley, New
York-London-Sydney-Toronto (1976) xiv+274 pp.

\bibitem{Bor16} 
A. A. Borovkov  {\em Refinement and generalization of the integro-local Stone theorem for sums of random vectors,} Theory Probab. Appl. {\bf 61} (2017) 590--612.

\bibitem{Br}
E. Breuillard, {\em Distributions diophantiennes et theoreme limite local sur $\bbR^d$}, Probab.
Th. Rel. Fields {\bf 132} (2005) 39--73.

\bibitem{CGPPS}  
F. Castell, N. Guillotin-Plantard,  F. Pene,  B. Schapira
{\em A local limit theorem for random walks in random scenery and 
on randomly oriented lattices,} 
Ann. Probab. {\bf 39} (2011) 2079--2118.

\bibitem{CP}
Z. Coelho, W. Parry {\em Central limit asymptotics for shifts of finite type}, Israel J. Math.
{\bf 69} (1990) 235--249.


\bibitem{D-MD} 
 B. Davis, D. McDonald 
{\em  An elementary proof of the local central limit theorem,} J. Theoret. Probab. {\bf 8} (1995) 693--701. 

\bibitem{Do} D. Dolgopyat 
{\em A local limit theorem for sums of independent random vectors,} Electron. J. Probab. {\bf 21} (2016) \# 39, 15 pp.

\bibitem{DF} D. Dolgopyat, K. Fernando
{\em An error term in the Central Limit Theorem for sums of discrete random variables,}
preprint.

 \bibitem{DNP}
D. Dolgopyat, P. Nandori, F. Pene
{\em Asymptotic expansion of correlation functions for  covers of hyperbolic flows,}
preprint.

\bibitem{DS} D. Dolgopyat, O. Sarig 
{\em Local limit theorems for inhomogeneous Markov chains,}
preprint.


\bibitem{Dub}
R. Dobrushin  {\em Central limit theorems for non-stationary Markov chains I, II}. Theory Probab. Appl.
{\bf 1} (1956) 65--80, 329--383.

\bibitem{Ess}
C.-G. Esseen {\em Fourier analysis of distribution functions. A mathematical study of the Laplace-Gaussian law,} 
Acta Math. {\bf 77} (1945) 1--125.

\bibitem{Feller}
W. Feller, {\em An introduction to probability theory and its applications}, Vol. II., 2d edition, John Wiley \& Sons, Inc., New York-London-Sydney, 1971.

\bibitem{FH}

K. Fernando and P. Hebbar
{\em Higher order asymptotics for large deviations.}
Part I: arXiv:1811.06793; 
part II: arXiv:1907.11655.

\bibitem{FL}
K. Fernando and C. Liverani, 
{\em Edgeworth expansions for weakly dependent random variables}, preprint, arXiv 1803.07667, 2018.

\bibitem{GW17} 
R. Giuliano, M. Weber
{\em Approximate local limit theorems with effective rate and application to random walks in random scenery,} 
Bernoulli {\bf 23} (2017) 3268--3310.


\bibitem{Ha}
Y. Hafouta, {\em On the asymptotic moments and Edgeworth expansions of some processes in random dynamical environment},  J. Stat Phys {\bf 179} (2020) 945--971.

\bibitem{IL} I. A. Ibragimov, Yu. V. Linnik
{\em Independent and stationary sequences of random variables,} 
Wolters-Noordhoff Publishing, Groningen, 1971. 443 pp.


\bibitem{LPRS} J. L. Lebowitz, B. Pittel, D. Ruelle, E. Speer 
{\em Central limit theorems, Lee-Yang zeros, and graph-counting polynomials,}
 J. Combin. Th. {\bf 141} (2016) 147--183. 

\bibitem{Mal78} 
 R. A. Maller
{\em A local limit theorem for independent random variables,}
Stochastic Process. Appl. {\bf 7} (1978) 101--111.

\bibitem{MS70} 
J. Mineka, S. Silverman
{\em A local limit theorem and recurrence conditions for sums of independent non-lattice random variables,}
Ann. Math. Statist. {\bf 41} (1970) 592--600.

\bibitem{MRM} S. M. Mirakhmedov, S. Rao Jammalamadaka, 
I. B. Mohamed. {\em On Edgeworth expansions in generalized urn models,} 
 J. Theoret. Probab. {\bf 27} (2014) 725--753.

\bibitem{MPP} 
F. Merlev\'ede,  M. Peligrad, M. and C. Peligrad
{\em On the local limit theorems for $\psi$-mixing Markov chains,}
arXiv:2006.13361.




\bibitem{Pel}
F. Merlev\'ede,  M. Peligrad, M. and S. Utev, S, {\em Functional Gaussian Approximation for Dependent Structures}, Oxford University Press (2019).



\bibitem{MS74} 
 Mitalauskas A., Statuljavicus, V.
{\em On local limit theorems,} 
Lithuanian Math. Trans. part I:
{\bf 14} (1974) 628--640; 
part II: {\bf 17} (1977) 550--554.

 \bibitem{Mu84}
A. B. Mukhin {\em Local limit theorems for distributions of sums of independent random vectors,}
Theory Probab. Appl. {\bf 29} (1984) 369--375.

\bibitem{Mu91}
A. B. Mukhin 
{\em Local limit theorems for lattice random variables,} 
Theory Probab. Appl. {\bf 36} (1991) 698--713.


\bibitem{Nag2}
S.V. Nagaev, {\em More exact statements of limit theorems for homogeneous Markov chains}, 
Theory Probab. Appl. {\bf 6} (1961) 62--81.



\bibitem{Pel1}
M. Peligrad, {\em Central limit theorem for triangular arrays
of non-homogeneous Markov chains} Prob. Theor. Rel. Field. 
{\bf 154} (2012) 409--428.

\bibitem{Pet75} 
  V. V. Petrov 
{\em Sums of independent random variables,}
Ergebnisse der Mathematik und ihrer Grenzgebiete
{\bf 82} (1975) Springer, New York-Heidelberg,  x+346 pp.

\bibitem{PR08} 

V. V. Petrov, J. Robinson
{\em Large deviations for sums of independent non identically distributed random variables,}
Comm. Statist. Theory Methods {\bf 37} (2008) 2984--2990.

\bibitem{Prok}
Yu. V. Prokhorov, {\em On a local limit theorem/or lattice distributions}, Dokl. Akad. Nauk SSSR,
{\bf 98} (1954) 535--538.


\bibitem{Rozanov}
Yu. A. Rozanov, {\em On a local limit theorem for lattice distributions}, 
Theory of Probability and Its Applications, 1957.

 
\bibitem{SaSt91} 
L. Saulis, V. A. Statulevicius, {\em Limit theorems for large deviations,} 
Math. \& Appl. (Soviet Series) {\bf 73} (1991)
 Kluwer, Dordrecht, viii+232 pp.


\bibitem{VS}
S. Sethuraman and S.R.S Varadhan, {\em A martingale proof of Dobrushin?s theorem for non-homogeneous
Markov chains}, Electron. J. Probab. {\bf 10} (2005) 1221--1235. 


\bibitem{VMT} Vo An Zung; Muhin, A. B.; To An Zung
{\em Certain local limit theorems for independent integer-valued random variables,} (Russian)
Izv. Akad. Nauk UzSSR Ser. Fiz.-Mat. Nauk 1980, no. 5, 9--15.


\end{thebibliography}
\end{document}